\newcommand{\beq}{\begin{equation}}
\newcommand{\eeq}{\end{equation}}
\def\R{\mathbb R}
\def\N{\mathbb N}
\def\C{\mathcal{C}}
\def\cal{\mathcal}
\def\H{{\cal H}}
\def\e{\varepsilon}
\def\om{\omega}
\def\Chi#1{\hbox{{\large $\chi$}{\Large $_{_{#1}}$}}}
\newcommand{\medint}{-\kern -,375cm\int}
\newcommand{\medintinrigo}{-\kern -,315cm\int}
\def\pa{\partial}
\newcommand{\eps}{\varepsilon}
\providecommand{\U}[1]{\protect\rule{.1in}{.1in}}
\newtheorem{theorem}{Theorem}
\newtheorem{corollary}[theorem]{Corollary}
\newtheorem{lemma}[theorem]{Lemma}
\newtheorem{proposition}[theorem]{Proposition}
\theoremstyle{definition}
\newtheorem{remark}[theorem]{Remark}
\numberwithin{equation}{section}
\numberwithin{theorem}{section}
\title[Exterior isoperimetric on convex sets]{Rigidity and large volume residues \\ in exterior isoperimetry for convex sets}
\author{N. Fusco} 
\address{Dipartimento di Matematica e Applicazioni ``R. Caccioppoli",
Universit\`{a} degli Studi di Napoli ``Federico II" , Napoli, Italy}
\email{n.fusco@unina.it}
\author{F. Maggi}
\address{Department of Mathematics, The University of Texas at Austin, 2515 Speedway, Stop C1200, Austin TX 78712-1202, United States of America}
\email{maggi@math.utexas.edu}
\author{M. Morini}
\address{Dipartimento di Scienze Matematiche Fisiche e Informatiche, Universit\`{a} degli Studi di Parma, Parma, Italy}
\email{massimiliano.morini@unipr.it}
\author{M. Novack}
\address{Department of Mathematical Sciences, Carnegie Mellon University, Wean Hall 6113,
Pittsburgh, PA 15213, United States of America}
\email{mnovack@andrew.cmu.edu}
\begin{document}
\allowdisplaybreaks

%{\bf Summary of the file:}
%\begin{itemize}
%\item Section 1 contains preliminaries and notation.
%\item Section 2 contains your work from last summer.
%\item 3.1 contains a characterization of convex sets in terms of $d^*(C)$.
%\item 3.2 contains general existence/uniform minimality results and proofs.
%\item 3.3 has some results for the case $d^*(C)=1$ (which includes the only unresolved case in the physical dimensions). The second order term, i.e. the isoperimetric residue $R_C(v)$ which also depends on $v$, in the expansion for the minimal energy diverges (since $C$ is unbounded), but at a slower rate than $v^{(N-1)/N}$. It is not clear what the rate at which $R_C(v)\to \infty$ is. However, the study of $R_C(v)$ can be reduced to the case where $C$ is a cylinder with bounded cross section and might be tractable.
%\item 3.4 has ideas for further progress. In particular, it could be cool to show that $R_C(v) = \sigma v^\alpha$ and find $\sigma$ and $\alpha$. This might be connected to my paper with Francesco \cite{MN}. This is explained in more detail, and some estimates for $\alpha$ are given.
%\end{itemize}

\begin{abstract} A comparison theorem by Choe, Ghomi and Ritor\'e states that the exterior isoperimetric profile $I_\C$ of any convex body $\C$ in $\R^N$ lies above that of any half-space $H$. We characterize convex bodies such that $I_\C\equiv I_H$ in terms of a notion of ``maximal affine dimension at infinity'', briefly called the asymptotic dimension $d^*(\C)$ of $\C$. More precisely, we show that $I_\C\equiv I_H$ if and only if $d^*(\C)\ge N-1$. We also show that if $d^*(\C)\le N-2$, then, for large volumes, $I_\C$ is asymptotic to the isoperimetric profile of $\R^N$. We then estimate, in terms of $d^*(\C)$-dependent power laws, the order as $v\to\infty$ of the difference between $I_\C$ and the isoperimetric profile of $\R^N$.
\end{abstract}

\maketitle

\tableofcontents

\section{Introduction} In this paper we consider the exterior isoperimetric problem for convex bodies $\C\subset \R^N$. We first identify a natural notion of ``maximal affine dimension at infinity'', called the {\it asymptotic dimension} of $\C$, denoted by $d^*(\C)$ and typically larger than the affine dimension of the recession cone $\C_\infty$ of $\C$. We then prove that unbounded convex bodies with $d^*(\C)\ge N-1$ have the {\it same} exterior isoperimetric profile of half-spaces. We also begin the study of exterior isoperimetric profiles of convex bodies with $d^*(\C)\le N-2$, and introduce in this setting the notion of {\it isoperimetric residue}. We have two motivations for presenting these results. The first one is the desire of understanding the geometric information ``stored'' in the large volume behavior of exterior isoperimetric profiles, as done in \cite{MN} with the introduction of the isoperimetric residues of compact (non-necessarily convex) sets. The second one is providing characterizations of rigidity of equality cases in a recent comparison theorem for exterior isoperimetric profiles of convex bodies proved by  Choe, Ghomi and Ritor\'e in \cite{CGR}. 

\medskip

Before further discussing these points we need to introduce some notation and terminology. Given a closed set $\C\subset\R^N$, the {\bf exterior isoperimetric profile} of $\C$ is the function $I_\C:(0,\infty)\to(0,\infty)$ defined by
\begin{equation}
\label{ICV}
I_\C(v)=\inf\big\{P(E;\R^N\setminus\C):E\subset\R^N\setminus\C\,,|E|=v\big\}\,,\qquad v>0\,.
\end{equation}
That is, $I_\C$ is what is commonly called the isoperimetric profile of $\R^N\setminus\C$; in particular, $I_\varnothing(v)=N\,\omega_N^{1/N}\,v^{(N-1)/N}$ coincides with the isoperimetric profile of $\R^N$. Here $|E|$ denotes the volume (Lebesgue measure) of a Borel set $E$ and $P(E;G)$ its distributional perimeter, so that $P(E;G)=\H^{N-1}(G\cap\partial E)$ as soon as $E$ is open with $C^1$-boundary. We also denote by $\H^k$ the $k$-dimensional Hausdorff measure in $\R^N$ and by $\omega_N$ the volume of the unit ball in $\R^N$.

\medskip

With this notation in hand, we can state the comparison theorem proved by Choe, Ghomi and Ritor\'e in \cite{CGR}: if $\C$ is a {\bf convex body}, that is, if $\C$ is a closed convex subset of $\R^N$ with nonempty interior and different from the whole $\R^N$, then, denoting by $H$ a generic half-space of $\R^N$, one has
\begin{equation}
\label{cgr comparison}
I_C(v)\ge I_H(v)=N\,\big(\omega_N/2\big)^{1/N}\,v^{(N-1)/N}\,,\qquad\forall v>0\,.
\end{equation}
To provide some context, we recall that {\it comparison theorems} of this form are one of the most studied type of results in Riemannian Geometry. For example, the Levy--Gromov comparison theorem states that among compact Riemannian manifolds $(M^N,g)$ whose Ricci tensor is bounded from below by $(N-1)\,g$, the standard sphere $(S^N,g_{S^N})$ has the lowest isoperimetric profile\footnote{The isoperimetric profile $\Phi_{(M,g)}(t)$ of $(M,g)$ for the volume fraction $t\in(0,1)$ is defined as the infimum of $P_g(E)/{\rm vol}_g(M)$ among all sets $E\subset M$ with $t={\rm vol}_g(E)/{\rm vol}_g(M)$.}. Comparison theorems are usually accompanied by {\it rigidity statements}. Using again the Levy--Gromov comparison theorem as a model, the corresponding rigidity statement is that if the isoperimetric profiles of $(M^N,g)$ and  $(S^N,g_{S^N})$ coincide even for just one volume fraction, then $(M^N,g)$ is isometric to   $(S^N,g_{S^N})$. The situation with the CGR-comparison theorem is remarkably different. Indeed, as proved in \cite{CGR} when $\C$ has boundary of class $C^2$, and for arbitrary convex bodies in \cite{FM}, one has
\begin{eqnarray}\label{rigidity 1}
&&\mbox{$I_\C(v)=I_H(v)$ for some value of $v>0$ s.t. there exist minimizers of $I_\C(v)$}
\\\nonumber
&&\hspace{4cm}\mbox{if and only if}
\\\label{rigidity 2}
&&\mbox{$\partial\C$ has a ``facet'' that supports a half-ball of volume $v$ contained in $\R^N\setminus\C$}\,.
\end{eqnarray}
In particular, the class of convex bodies satisfying $I_\C(v)=I_H(v)$ for a single value of $v>0$ is extremely vast, e.g., it contains all convex polyhedra!, and coincides with the class of convex bodies $\C$ such that $I_\C=I_H$ on an open interval. We can of course formulate the rigidity problem in a stronger sense, and consider the class of convex bodies $\C$ such that $I_\C(v)=I_H(v)$ for {\it every} $v>0$. While it is not hard to construct unbounded convex bodies that are not half-spaces and still satisfy $I_\C\equiv I_H$, there is a surprisingly direct condition that characterizes the stronger notion of rigidity $I_\C\equiv I_H$. This condition can be expressed in terms of the asymptotic dimension $d^*(\C)$ of $\C$ introduced right after the following statement, which is our first main result.

\begin{theorem}[Strong rigidity in the CGR-comparison theorem]
\label{theorem main 1}
Let $\C$ be a convex body in $\R^N$. Then $I_\C=I_H$ on $(0,\infty)$ if and only if $d^*(\C)\in\{N-1,N\}$. If, otherwise, $d^*(\C)\in\{0,1,...,N-2\}$, then 
\[
\lim_{v\to\infty}\frac{I_\C(v)}{N\,\omega_N^{1/N}\,v^{(N-1)/N}}=1\,,
\]
and, in particular, $I_\C(v)>I_H(v)$ for every $v$ sufficiently large (depending on $N$ and $\C$).
\end{theorem}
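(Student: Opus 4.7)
My plan is to treat the two regimes $d^*(\C)\ge N-1$ and $d^*(\C)\le N-2$ separately, and combine them with the CGR bound $I_\C\ge I_H$ and the trivial upper bound $I_\C(v)\le N\,\omega_N^{1/N}\,v^{(N-1)/N}$ (place a ball of volume $v$ arbitrarily far from $\C$, which is possible since $\R^N\setminus\C$ is unbounded) to read off the full theorem. The ``only if'' direction of the first equivalence is then a corollary of the asymptotic statement, as the latter yields $I_\C(v)/I_H(v)\to 2^{1/N}>1$ whenever $d^*(\C)\le N-2$.

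For the regime $d^*(\C)\ge N-1$, I would prove the matching upper bound $I_\C(v)\le I_H(v)$ for every $v>0$ by constructing half-ball competitors near infinity. The assumption $d^*(\C)\ge N-1$ should supply, for each $\eps>0$, a point $x_\eps$ and a radius $R_\eps$ with $R_\eps/v^{1/N}$ as large as desired, such that $\partial\C\cap B_{R_\eps}(x_\eps)$ lies within Hausdorff distance $\eps\,R_\eps$ of an affine hyperplane $\Pi_\eps$ supporting $\C$. Placing on the $\R^N\setminus\C$ side of $\Pi_\eps$ a half-ball $E_\eps$ of volume $v$ centered on $\Pi_\eps$, I would verify that $|E_\eps\cap\C|=O(\eps\,v)$ and $P(E_\eps;\R^N\setminus\C)\le I_H(v)+O(\eps\,v^{(N-1)/N})$, the error coming from a possible spherical cap cut off by $\partial\C$. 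A volume correction via a small auxiliary ball and $\eps\to 0$ yield $I_\C(v)\le I_H(v)$, and CGR gives equality.

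For the regime $d^*(\C)\le N-2$, I would prove the matching lower bound by a blowdown-contradiction scheme. Suppose there exist $\delta>0$, $v_j\to\infty$, and near-minimizers $E_j\subset\R^N\setminus\C$ with $|E_j|=v_j$ and $P(E_j;\R^N\setminus\C)\le(1-\delta)\,N\,\omega_N^{1/N}\,v_j^{(N-1)/N}$. Pick reference points $x_j$, set $r_j:=v_j^{1/N}$, and rescale: $F_j:=(E_j-x_j)/r_j$, $\C_j:=(\C-x_j)/r_j$, so that $|F_j|=1$ and $P(F_j;\R^N\setminus\C_j)\le(1-\delta)\,N\,\omega_N^{1/N}$. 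By Blaschke's selection theorem, up to a subsequence $\C_j\to\C_\infty$ in the local Hausdorff sense for a closed convex set $\C_\infty$, and the hypothesis $d^*(\C)\le N-2$ should give $\dim(\C_\infty)\le N-2$, hence $\H^{N-1}(\C_\infty)=0$. For any open $U$ with $\overline{U}\subset\R^N\setminus\C_\infty$, local Hausdorff convergence implies $U\cap\C_j=\varnothing$ eventually, so $L^1_\loc$-convergence $F_j\to F_\infty$ and lower semicontinuity of perimeter give $P(F_\infty;U)\le(1-\delta)\,N\,\omega_N^{1/N}$. Exhausting $\R^N\setminus\C_\infty$ by such $U$ and using $\H^{N-1}(\C_\infty)=0$ yields $P(F_\infty;\R^N)\le(1-\delta)\,N\,\omega_N^{1/N}$, contradicting the Euclidean isoperimetric inequality provided $|F_\infty|=1$.

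The central obstacle will be securing \emph{tightness} of $\{F_j\}$, since \emph{a priori} mass may escape to infinity or the near-minimizer may split into distant vanishing pieces. I would handle this by combining uniform $\Lambda$-minimality bounds (from near-optimality of the perimeter together with Tamanini-type regularity in the Lipschitz domains $\R^N\setminus\C_j$) with a concentration-compactness procedure \`a la Almgren to extract a single limit piece of unit volume. A secondary step is the identification $\dim(\C_\infty)\le d^*(\C)$, which should follow from the definition of the asymptotic dimension but must be verified uniformly over all admissible center sequences---including those with $\dist(x_j,\C)/r_j\to\infty$, in which case $\C_\infty=\varnothing$ and the contradiction scheme reduces to the classical isoperimetric inequality in $\R^N$.
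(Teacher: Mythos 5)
The overall two-regime structure matches the paper, and your treatment of the case $d^*(\C)\le N-2$ is in the same spirit as the paper's (blowdown, Kuratowski limit of the rescaled obstacle with affine dimension $\le N-2$, hence negligible $(N-1)$-footprint; tightness is indeed the delicate point, which the paper handles by working with penalized minimizers of the constrained profile $I_{\C,Rv^{1/N}}$ from Proposition~\ref{prop:ICR} and showing directly that each connected component has vanishing perimeter on the rescaled $\partial\C$). The real problem is in your upper-bound competitor for the regime $d^*(\C)\ge N-1$.

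The half-ball construction does not work as described. If $\Pi_\eps$ is a hyperplane \emph{supporting} $\C$ and $E_\eps$ is a half-ball placed on the $\R^N\setminus\C$ side of $\Pi_\eps$ with its flat face on $\Pi_\eps$, then the entire flat face lies in $\R^N\setminus\C$ (since $\C$ is on the opposite side of $\Pi_\eps$, touching it only in an $\H^{N-1}$-null set unless $\partial\C$ contains a facet). Consequently
$P(E_\eps;\R^N\setminus\C)=P(E_\eps)=I_H(v)+\omega_{N-1}\rho_v^{N-1}$
with $\rho_v=(2v/\omega_N)^{1/N}$: the flat disc contributes an error of order $v^{(N-1)/N}$, not $O(\eps\,v^{(N-1)/N})$. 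Your statement that $|E_\eps\cap\C|=O(\eps v)$ is also inconsistent with $E_\eps$ lying on the non-$\C$ side of a supporting plane ($|E_\eps\cap\C|=0$). The correct competitor, as used in the paper (Lemma~\ref{lm:main}), is $B_r(x)\setminus\C$ (or a single connected component of it) with $x\in\partial\C$: then $P(\cdot\,;\R^N\setminus\C)=\H^{N-1}(\partial B_r(x)\setminus\C)$ picks up \emph{only} the spherical part, and the asymptotic flattening of $\partial\C$ drives this to a hemisphere. Even granting the fix, your assertion that $d^*(\C)\ge N-1$ ``should supply'' a point where the boundary is $\eps R$-flat in a ball of radius $R\gg v^{1/N}$ is not a tautology; the paper extracts from $d^*(\C)\ge N-1$ a (non-rescaled) Kuratowski sublimit $K$ of translates of $\C$ with $\dim K_\infty\ge N-1$, establishes $I_K=I_H$ by controlling the gradient of the concave function describing $\partial K$ via an oscillation estimate (Lemma~\ref{lm:main}), and then transfers back to $\C$ using the upper semicontinuity of the exterior profile with respect to Kuratowski convergence (Lemma~\ref{etto}). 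That last semicontinuity step is also absent from your sketch and is needed to connect the flattening ``at infinity'' with a competitor for the original $\C$.
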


\begin{remark}
Notice that Theorem \ref{theorem main 1} addresses rigidity without making the conditional assumption that $I_\C(v)$ admits minimizers, either for one or more values of $v$. In fact, by combining Theorem \ref{theorem main 1} with the equivalence between \eqref{rigidity 1} and \eqref{rigidity 2} proved in \cite{CGR,FM}, one easily shows that {\it if $\C$ is strictly convex and $d^*(\C)\ge N-1$, then, for every $v>0$, $I_\C(v)$ does not admit minimizers}; see Corollary \ref{nonex}.
\end{remark}

We now introduce the notion of {\bf asymptotic dimension} $d^*(\C)$ of $\C$. To begin with we recall that if $x\in\C$ and $\C$ is a convex body, then the family of convex sets $\{\lambda\,(\C-x)\}_{\lambda>0}$ is monotone increasing with respect to set inclusion. In particular, its limit as $\lambda\to 0^+$, called the {\bf recession cone} $\C_\infty$ of $\C$, can be defined as
\begin{equation}
\label{recession cone}
\C_\infty=\bigcap_{\lambda>0}\lambda\,(\C-x)\,.
\end{equation}
The (affine) dimension $\dim(\C_\infty)$ of $\C_\infty$ (that is, the dimension of the smallest affine subspace of $\R^N$ containing $\C_\infty$) directly quantifies the dimension of the set of directions along which $\C$ is unbounded (and, indeed, $\C$ is bounded if and only if $\C_\infty$ is a point, that is, $\dim(\C_\infty)=0$). To define $d^*(\C)$, and understand rigidity of the CGR-comparison theorem, we follow a related procedure. Rather than working with a fixed point $x\in\C$, we consider sequences $\{x_n\}_n$ in $\C$ and $\{\lambda_n\}_n$ in $(0,\infty)$, look at the resulting sequences of convex sets $\{\lambda_n\,(\C-x_n)\}_n$, consider all their possible accumulation points $K$ in the Kuratowski convergence, which are still convex sets in $\R^N$, and finally {\it maximize} the affine dimension among the possible limits $K$. More formally and concisely, we set
\begin{equation}
\label{asymptotic dimension}
d^*(\C):=\max\big\{\dim(K):\mbox{$\exists\,x_n\in\C\,,\lambda_n\to 0$ s.t. $\lambda_n(\C-x_n)\to K$ as $n\to\infty$}\big\}
\end{equation}
where $\lambda_n(\C-x_n)\to K$ as $n\to\infty$ is meant in the sense of Kuratowski. By taking $x_n=x\in\C$ for every $n$ it is easily seen that $d^*(\C)\ge\dim(\C_\infty)$, and examples where this inequality is strict are easily constructed.
\medskip

Having completely discussed the rigidity problem for the CGR-comparison theorem, we move to the problem of understanding which information on the convex body $\C$ is stored in the behavior of $I_\C(v)$ as $v\to\infty$, and what can be said, should they exist, about large volume exterior isoperimetric sets of $\C$, i.e., about minimizers $E_v$ of $I_\C(v)$ as $v\to\infty$. 

\medskip

Theorem \ref{theorem main 1} plays an interesting role in setting up these problems. Indeed, Theorem \ref{theorem main 1} states that for a convex body $\C$ with $d^*(\C)\in \{N-1,N\}$ there is no  geometric information (in addition to  $d^*(\C)\in \{N-1,N\}$) that can be found by studying $I_\C$ for $v$ large, as $I_\C$ must then be identically equal to $I_H$. At the same time, when $d^*(\C)\le N-2$, Theorem \ref{theorem main 1} invites us to investigate what information about $\C$ may be stored in the behavior of the quantity
$$
\mathcal{R}_\C(v):=N\,\om_N^{1/N}\,v^{(N-1)/N} - I_\C(v)\,,
$$
as $v\to\infty$.

\medskip

When $d^*(\C)=0$, i.e., when $\C$ is bounded, this problem has been thoroughly addressed in the recent paper \cite{MN} without even assuming the convexity of $\C$. Roughly speaking, the main result in \cite{MN} is that if $\C$ is any compact set in $\R^N$, then
\[
\lim_{v\to\infty}\mathcal{R}_\C(v)=\mathcal{R}(\C)
\]
where $\mathcal{R}(\C)$, called the {\bf isoperimetric residue of $\C$}, is a non-negative quantity such that $\mathcal{R}(\lambda\,\C)=\lambda^{N-1}\,\mathcal{R}(\C)$ for every $\lambda>0$ and
\[
\sup_{M}\H^{N-1}(M\cap\C)\le\mathcal{R}(\C)\le\sup_M\H^{N-1}(\mathbf{p}_M(\C))
\]
where $M$ ranges over all the affine hyperplanes in $\R^N$ and $\mathbf{p}_M:\R^N\to M$ denotes the orthogonal projection over $M$. In fact, $\mathcal{R}(\C)$ can be characterized as an optimization problem whose solutions are area minimizing boundaries contained in $\R^N\setminus\C$, trapped in between two parallel hyperplanes, and intersecting $\C$ orthogonally. Moreover, exterior isoperimetric sets $E_v$ for $\C$ with $v$ large can be fully ``resolved as'' (i.e., expressed as small diffeomorphic deformations of) the union of a large sphere of volume $v$ missing a spherical cap and of an optimizing boundary in $\mathcal{R}(\C)$. Obtaining this resolution result requires the introduction a new $\varepsilon$-regularity criterion operating at mesoscales and interpolating between the local Allard's regularity theorem \cite{Allard}, based on the analysis of blowups, and the ``at infinity'' regularity theorems of Allard--Almgren \cite{AllardAlmgren} and Simon \cite{Simon}, based on the analysis of blowdowns.

\medskip

Coming back to the case of convex bodies, we are left to consider the case when $1\le d^*(\C)\le N-2$ (and thus, with $N\ge 3$). In this case we do not expect $\mathcal{R}_\C(v)$ to have a finite limit as $v\to\infty$, but rather to depend on $v$ through a power law. This expectation is not completely confirmed, but it is definitely supported, by our second main result. 

\begin{theorem}\label{theorem order of residue}
Let $\C$ be a convex body in $\R^N$, $N\ge 3$, with $d^*(\C)\in\{1,...,N-2\}$. Then there exists a positive constant $C_0$ depending on $N$ and $\C$ such that
\begin{equation}
\label{intro res}
\frac{v^{d^*(\C)/2N}}{C_0}\le \mathcal{R}_\C(v) \le C_0\,v^{d^*(\C)/N}\,,\qquad\forall v>C_0\,.
\end{equation}
\end{theorem}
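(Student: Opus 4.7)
The plan is to prove the two bounds by separate arguments, setting $D:=d^*(\C)$. The lower bound $\mathcal{R}_\C(v)\ge v^{D/(2N)}/C_0$ is established by constructing an explicit competitor tailored to the approximately $D$-dimensional structure of $\C$ at scale $r_0:=(v/\omega_N)^{1/N}$. The upper bound $\mathcal{R}_\C(v)\le C_0\,v^{D/N}$ is derived from the Euclidean isoperimetric inequality combined with a surface-content bound for $\partial\C$ of order $R^D$ at scale $R$, which is the quantitative incarnation of $d^*(\C)=D$.

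For the lower bound, first use $d^*(\C)=D$ to extract, for every large $v$, a point $x_v\in\partial\C$, a supporting outer normal $\nu$ to $\C$ at $x_v$, and a $D$-plane $L_v$ through $x_v$ such that $\C\cap B_{r_0}(x_v)$ lies in a tube of bounded (that is, $\C$-dependent, $v$-independent) radius around $L_v$. Fix a small parameter $h>0$ chosen in terms of this tube radius, set $c_h:=x_v+(r-h)\nu$ for a radius $r$ slightly larger than $r_0$, and let $E:=B_r(c_h)\setminus\C$. A direct cap-type computation in coordinates adapted to $\nu$ and $L_v$ yields
\[
|B_r(c_h)\cap\C|\simeq h^{(N+1)/2}\,r^{D/2},\qquad \H^{N-1}(\partial B_r(c_h)\cap\C)\simeq h^{(N-1)/2}\,r^{D/2}.
\]
Selecting $r$ so that $|E|=v$ forces $r-r_0\simeq h^{(N+1)/2}\,r_0^{D/2-N+1}$, and expanding
\[
P(E;\R^N\setminus\C)=N\omega_N\,r^{N-1}-\H^{N-1}(\partial B_r(c_h)\cap\C)
\]
around $r=r_0$ shows that $P(E;\R^N\setminus\C)$ lies below the free value $N\omega_N^{1/N}v^{(N-1)/N}$ by an amount comparable to $h^{(N-1)/2}\,r_0^{D/2}\simeq v^{D/(2N)}$, which is the desired lower bound on $\mathcal{R}_\C(v)$.

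For the upper bound, note that for any $E\subset\R^N\setminus\C$ one has the decomposition $P(E)=P(E;\R^N\setminus\C)+\H^{N-1}(\partial^*E\cap\partial\C)$, so the Euclidean isoperimetric inequality $P(E)\ge N\omega_N^{1/N}v^{(N-1)/N}$ gives
\[
P(E;\R^N\setminus\C)\ge N\omega_N^{1/N}v^{(N-1)/N}-\H^{N-1}(\partial^*E\cap\partial\C).
\]
It thus suffices to bound $\H^{N-1}(\partial^*E\cap\partial\C)\le C_0\,v^{D/N}$ on near-optimal competitors. This proceeds in two steps: (a) a quantitative localization showing that a near-minimizer $E$ is, up to a small-volume remainder with negligible contact area, contained in a ball $B_R(y_E)$ with $R\simeq v^{1/N}$, extracted from the quantitative isoperimetric inequality applied to $P(E)$; and (b) a uniform surface bound $\H^{N-1}(\partial\C\cap B_R(y))\le C_0\,R^D$ valid for every $y\in\R^N$ and every $R\ge 1$. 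Ingredient (b) follows from $d^*(\C)=D$ by contradiction and compactness: a violating sequence $R_n\to\infty$, $y_n\in\R^N$, after rescaling by $1/R_n$, would produce via Blaschke selection a Kuratowski limit of affine dimension strictly greater than $D$, contradicting the definition of $d^*(\C)$.

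The hard part is step (b) together with the matching localization in (a). The statement $d^*(\C)=D$ is purely qualitative — it only forbids Kuratowski limits of higher dimension — and must be upgraded to a uniform, quantitative width bound on $\C$ transverse to its $D$-dimensional asymptotic spine at every large scale and every base point; this requires a careful compactness and diagonal-extraction argument. In parallel, the localization of $E$ must be sharp enough that the residual contact outside the localization ball is of order $o(v^{D/N})$ rather than swamping the main estimate. Once both are in place, the upper bound follows by plugging the surface estimate into the isoperimetric comparison above.
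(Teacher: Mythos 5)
Your overall plan — lower bound by an explicit ball competitor exploiting the $D$-dimensional geometry of $\C$, upper bound from the Euclidean isoperimetric inequality plus an area bound on $\partial\C$ inside a ball — is in the same spirit as the paper's, but both halves have genuine gaps.

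\emph{Lower bound.} The decisive point is that you need a \emph{lower} bound on $\H^{N-1}(\partial B_r(c_h)\cap\C)$ of order $r^{D/2}$, since this is precisely the perimeter saved. Asserting that near $x_v$ the body $\C$ is contained in a tube of bounded radius around a $D$-plane $L_v$ is an \emph{upper} containment and only yields upper bounds on $|B_r\cap\C|$ and $\H^{N-1}(\partial B_r\cap\C)$. To get the lower bound on the saved area you must know that the cross-section of $\C$ near $x_v$ actually contains a fixed $(N-D)$-dimensional convex set of definite size, i.e., that $\C$ ``fills'' the tube where you place the ball. Nothing in the definition of $d^*(\C)$, or in the statement you extracted from it, provides this. (Your $h$-exponents $(N\pm1)/2$ are also wrong for $D\le N-2$ — they correspond to the $D=N-1$ cap — but with $h$ fixed this doesn't affect the scaling and is a minor point.) The paper solves exactly this issue by first proving the comparison $I_\C(v)\le I_{Z+D}(v)$ for the honest cylinder $Z+D$ with $D=\mathrm{cl}\,\mathbf{p}_{Z^\perp}(\C)$, via upper semicontinuity of the isoperimetric profile under Kuratowski convergence along translations $\C - t z$, $z\in\C_\infty$ (Proposition \ref{general structure lemma}(c)--(d) plus Lemma \ref{etto} and Lemma \ref{determination of the isoperimetric profile}). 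For the cylinder the cross-section is literally $D$ and one fixes a cube $Q\subset\mathrm{ri}\,D$, which is what makes the lower bound on the saved area honest; your construction needs an analogous ingredient.

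\emph{Upper bound.} The estimate $P(E;\R^N\setminus\C)\ge N\omega_N^{1/N}v^{(N-1)/N}-\H^{N-1}(\partial^*E\cap\partial\C)$ is exactly the paper's \eqref{4.7}, but your route to the contact-area bound has a circularity in step (a): applying the quantitative isoperimetric inequality to localize $E$ requires $\delta_{\mathrm{iso}}(E)=o(1)$, which in turn requires $\H^{N-1}(\partial^*E\cap\partial\C)=o(v^{(N-1)/N})$ — precisely what you are trying to prove. The paper avoids this by working with the constrained problem $I_{\C,Rv^{1/N}}$ and using $\Lambda$-minimality and density estimates (Lemma \ref{lm:density}, Proposition \ref{prop:ICR}) to obtain that minimizers have a bounded number of components, each of diameter $\lesssim v^{1/N}$, \emph{before} any isoperimetric-deficit argument is made. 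On your step (b): the bound $\H^{N-1}(\partial\C\cap B_R(y))\lesssim R^D$ is correct, but the compactness/contradiction sketch is not quite right as written (the rescaling sends surface area to zero in general, and $y_n$ need not lie in $\C$, so $d^*(\C)=D$ does not apply directly; you would need to first project $y_n$ onto $\C$ and then argue via volume and convexity that perimeter near a low-dimensional Kuratowski limit vanishes). The cleaner and essentially one-line route is the one the paper uses: by Proposition \ref{general structure lemma}, $\C\subset Z+\mathrm{cl}\,\mathbf{p}_{Z^\perp}(\C)=:\widetilde\C$, and then $\H^{N-1}(\partial^*E'\cap\partial\C)\le P(\C\cap S)\le P(\widetilde\C\cap S)\lesssim(\mathrm{diam}\,E')^D$ by monotonicity of perimeter for nested convex sets and the explicit slab geometry of $\widetilde\C\cap S$.

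In short: the general strategy is recognizable, but the lower bound is missing the reduction-to-cylinders (or an equivalent quantitative ``fullness'' of cross-sections) that makes the saved-area estimate two-sided, and the upper bound's localization mechanism needs to be replaced by the density-estimate machinery to break the circularity.
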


It seems plausible to conjecture that the lower bound in \eqref{intro res} should be capturing the correct order of magnitude of $\mathcal{R}_\C(v)$. In other words, it seems plausible that for every $\C$ as in Theorem \ref{theorem order of residue} the limit
\[
\lim_{v\to\infty}\frac{\mathcal{R}_\C(v)}{v^{d^*(\C)/2N}}
\]
should exist in $(0,\infty)$, be amenable to be characterized as an optimization problem, and thus lead to extending the definition of isoperimetric residue to this class of obstacles. Resolving these issues would require fine geometric information on minimizers obtained by the application of an $\e$-regularity criterion analogous to \cite{MN}. Given the considerable complexity of this problem even in the compact case, we leave this question for future investigations; see Remark \ref{enveloping remark} for more discussion.

\medskip

We now discuss the organization of the paper and, in the process, we highlight some additional noteworthy statements proved in the paper but not included in Theorem \ref{theorem main 1} and Theorem \ref{theorem order of residue}. After setting our notation and recalling some basic facts concerning isoperimetric problems in Section \ref{section notation}, in Section \ref{section dstar} we collect several facts concerning the notion of asymptotic dimension: in particular, in Proposition \ref{general structure lemma} we describe the structure of sets with $d^*(\C)\le N-1$. In Section \ref{section large volume} we introduce a constrained exterior isoperimetric profile $I_{\C,R}$ (obtained by restricting competitors in $I_\C$ to be contained in $B_R(0)$), study its properties (Proposition \ref{prop:ICR}), and then use those for proving Theorem \ref{theorem main 1}. Finally, in Section \ref{section residues} we discuss the problem of defining isoperimetric residues for unbounded convex sets with asymptotic dimension less than $N-2$, and prove in particular Theorem \ref{theorem order of residue}, together with some properties of large volume exterior isoperimetric sets (see Theorem \ref{existence and uniform minimality} and Proposition \ref{cylinder hausdorff lemma take 2}) that lay the groundwork for further analysis.

\medskip

\noindent {\bf Acknowledgments:} FM has been supported by NSF Grant DMS-2247544. FM and MN have been supported by NSF Grant DMS-2000034 and NSF FRG Grant DMS-1854344. MN has been supported by NSF RTG Grant DMS-1840314. NF has been supported by PRIN Project 2017TEXA3H. The research of MM was partially supported by GNAMPA and by the University of Parma via the project “Regularity, Nonlinear Potential Theory and related topics”.

\section{Notation and preliminary results about perimeter minimizers}\label{section notation}
In the following we shall denote by $B_R(x)$ the ball with center at $x$ and radius $R$. If the center is at the origin we shall simply write $B_R$. Given $v>0$ 
we let $B^{(v)}(x)$ denote the ball of volume $v$ centered at $x\in \mathbb{R}^N$ and $B^{(v)}=B^{(v)}(0)$.

\medskip

Given $E\subset \R^N$ of locally finite perimeter and a Borel set $G$ we denote by $P(E;G)$ the perimeter of $E$ in $G$. The {\em reduced boundary} of $E$ will be denoted by $\pa^*E$, while $\pa^eE$ will stand for the {\em essential boundary} defined as
$$
\pa^eE:=\R^N\setminus(E^{(0)}\cup E^{(1)})\,,
$$
where $E^{(0)}$ and $E^{(1)}$ are the sets of points where the density of $E$ is $0$ and $1$, respectively. 
%Moreover, we denote by $\nu_E$ the {\em generalized exterior normal} to $E$, which is well defined at each point of $\pa^*E$, {and by $\mu_E$ the {\em Gauss-Green measure} associated to $E$}
%\beq\label{gg}
%\mu_E:=\nu_E\,\H^{N-1}\res\pa^*E\,.
%\eeq
In the following, when dealing with a set of locally finite perimeter $E$, we shall always tacitly assume that $E$ coincides with a precise representative that satisfies the property $\pa E=\overline{\pa^*E}$, see \cite[Remark 16.11]{M}. A possible choice is given by $E^{(1)}$ for which one may easily check that
\beq\label{Euno}
\pa E^{(1)}=\overline{\pa^*E}\,.
\eeq

\par\medskip
We premise some lemmas. The first lemma is proved  for instance in \cite[Lemma 3.6]{FFLM} for  $N=3$, but the same statement (with the same argument) holds in any dimension.
\begin{lemma}\label{lm:density}
Let $\C\subset\R^N$ be a convex body  and let $E\subset B_R\setminus \C$ satisfy the following minimality property: there exists $\Lambda\geq0$ such that
\beq\label{density1}
P(E; \R^N\setminus \C)\leq P(F; \R^N\setminus \C)+\Lambda|F\Delta E| \qquad\text{for all $F\subset B_R\setminus \C$.}
\eeq
Then  $E$ is equivalent to an open set, still denoted by $E$, such that $\pa E=\pa^e E$ and hence $\H^{N-1}(\pa E\setminus\pa^* E)=0$. 
Moreover, there exist $c_0=c_0(N)>0$ and $r_0=r_0(N,\Lambda)\in(0,1)$ independent of $R$ and $\C$, such that if $x\in\pa E'$, $E'$ being a connected component of $E$,  then
\beq\label{density2}
 |E'\cap B_r(x)|\geq c_0r^N
\eeq
for every $0<r\leq r_0$. 
Finally, if \eqref{density1} holds for all $F\subset \R^N\setminus\C$, then the above density estimates holds also for $\R^N\setminus E'$; i.e.,  $|B_r(x)\setminus E'|\geq c_0r^N$ for every $0<r\leq r_0$. 
\end{lemma}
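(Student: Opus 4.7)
The strategy is the classical ODE density estimate for $\Lambda$-quasi-minimizers of perimeter, adapted to the constrained setting of competitors inside $B_R\setminus\C$. The key observation is that the natural test sets---$F:=E\setminus B_r(x)$ for the inside estimate and $F:=E\cup(B_r(x)\setminus\C)$ for the outside one---automatically lie in $B_R\setminus\C$, so the admissibility constraint is transparent. The obstacle $\C$ enters only through the isoperimetric inequality: in place of the Euclidean one I invoke the CGR comparison $P(G;\R^N\setminus\C)\ge I_H(|G|)=c(N)\,|G|^{(N-1)/N}$, valid for any $G\subset\R^N\setminus\C$ with $c(N)$ depending only on $N$. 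This is precisely what makes the constants $c_0,r_0$ independent of $\C$ and $R$.

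For the inside estimate, fix $x\in\pa^e E$ and set $m(r):=|E\cap B_r(x)|$. Testing \eqref{density1} with $F=E\setminus B_r(x)$ and running the usual slicing computation for a.e.\ $r$ gives
\[
P(E; B_r(x)\cap(\R^N\setminus\C))\,\le\,\H^{N-1}(E^{(1)}\cap\partial B_r(x))+\Lambda\,m(r).
\]
Applying the CGR inequality to $G_r:=E\cap B_r(x)$ and using $P(G_r;\R^N\setminus\C)\le P(E;B_r(x)\cap(\R^N\setminus\C))+\H^{N-1}(E^{(1)}\cap\partial B_r(x))$, together with the coarea identity $m'(r)=\H^{N-1}(E^{(1)}\cap\partial B_r(x))$, I obtain
\[
c(N)\,m(r)^{(N-1)/N}\,\le\,2\,m'(r)+\Lambda\,m(r).
\]
Choosing $r_0=r_0(N,\Lambda)$ so that $\Lambda\,\omega_N^{1/N}\,r_0\le c(N)/2$ absorbs the lower-order term, which yields $(m^{1/N})'(r)\ge c(N)/(4N)$ on $(0,r_0]$; integrating from $0$ (where $m(0)=0$) gives $m(r)\ge c_0\,r^N$.

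The connected-component refinement follows from the same computation applied to $F:=E\setminus(E'\cap B_r(x))$: since distinct connected components of $E$ are separated, the perimeter cancellations are identical and $|E'\cap B_r(x)|$ satisfies the same ODE. For the topological conclusion of the lemma, I take $E=E^{(1)}$ (which is equivalent to an open set by a standard argument based on the density estimates, namely, the inside bound forces $E^{(0)}$ to be open, so the precise representative has topological boundary equal to $\overline{\pa^*E}$), and invoke \eqref{Euno} together with Federer's theorem $\H^{N-1}(\pa^eE\setminus\pa^*E)=0$ to conclude $\pa E=\overline{\pa^*E}=\pa^eE$ and $\H^{N-1}(\pa E\setminus\pa^*E)=0$.

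Finally, under the stronger minimality hypothesis, I test with the valid competitor $F:=E\cup(B_r(x)\setminus\C)\subset\R^N\setminus\C$ and set $\tilde m(r):=|B_r(x)\setminus(E\cup\C)|$. The analogous cut-in computation, combined with CGR applied to $\tilde G_r:=B_r(x)\setminus(E\cup\C)\subset\R^N\setminus\C$, yields $c(N)\,\tilde m(r)^{(N-1)/N}\le 2\,\tilde m'(r)+\Lambda\,\tilde m(r)$, whence $\tilde m(r)\ge c_0\,r^N$ by the same ODE. Since $E\cap\C$ is essentially empty, $|B_r(x)\setminus E'|\ge|B_r(x)\setminus E|\ge\tilde m(r)$, giving the exterior density bound. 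The main technical subtlety throughout is controlling the boundary contributions on $\pa\C$ in the perimeter identities, but since the cut-out competitor removes a set disjoint from $\C$ and the cut-in competitor adds a set disjoint from $\C$, the $\pa\C$-contribution of $E$ cancels on both sides of \eqref{density1} and never contaminates the estimates---which is ultimately why $c_0,r_0$ come out independent of $\C$.
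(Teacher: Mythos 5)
Your approach is the standard de~Giorgi--type density estimate for $\Lambda$-minimizers, with the Euclidean isoperimetric inequality replaced by the Choe--Ghomi--Ritor\'e inequality \eqref{isoperim1} applied to the cut-out/cut-in sets $E\cap B_r(x)$ and $B_r(x)\setminus(E\cup\C)$. This substitution is exactly what makes $c_0,r_0$ independent of $\C$ and of $R$, and it is the argument the paper implicitly invokes via the reference to \cite[Lemma~3.6]{FFLM}, so your route and the intended one coincide. The ODE computation for the interior estimate is correct.

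A few steps, however, are asserted more briskly than they can be justified and deserve attention. First, the passage from the inner bound to ``$E$ is equivalent to an open set with $\pa E=\pa^e E$'': the inner estimate only shows that $E^{(0)}$ is open. To conclude that $E^{(1)}$ is open (so that $\pa E\subset\pa^e E$ for the normalized representative) you also need the \emph{upper} estimate at boundary points, which even under the constrained hypothesis \eqref{density1} follows by testing with $F=E\cup(B_r(x)\setminus\C)$ for $B_r(x)\subset B_R$: either $|B_r(x)\setminus(E\cup\C)|>0$ for all small $r$ and the ODE applies, or it vanishes for small $r$, in which case $x$ has a full-measure neighborhood inside $E$. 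You should spell this dichotomy out rather than attribute openness to the inner bound alone. Second, the connected-component step is logically downstream of openness: once $E$ is open its components $E'$ and $E\setminus E'$ are disjoint open sets, so $\H^{N-1}(\pa^*E'\cap\pa^*(E\setminus E'))=0$ and removing $E'\cap B_r(x)$ from $E$ produces exactly the perimeter balance you use; the phrase ``the perimeter cancellations are identical'' hides this, and the argument is circular unless openness is established first. Third, and most substantively, in the last part you claim $\tilde m(r):=|B_r(x)\setminus(E\cup\C)|\geq c_0 r^N$ ``by the same ODE'', but the ODE yields this only when $\tilde m(r)>0$ for all $r>0$. At a point $x\in\pa E'\cap\pa\C$ where $E$ locally fills $B_\rho(x)\setminus\C$ one has $\tilde m\equiv 0$ near $0$, and then $|B_r(x)\setminus E'|=|B_r(x)\cap\C|$, which is controlled only by the density of $\C$ at $x$ and is not bounded below by a $\C$-independent constant (the density of a convex body at a sharp vertex can be arbitrarily small). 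Your proof as written does not exclude this ``hugging'' configuration, so the claimed exterior estimate at \emph{every} $x\in\pa E'$ requires either an argument ruling out hugging, or a restriction of the exterior estimate to $x\in\pa E'\setminus\pa\C$ (or to $\pa^*E'$), or acceptance of $\C$-dependent constants there. This gap should be addressed or at least acknowledged.
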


We recall that a sequence $\{F_n\}$ of closed sets converges in the {\em Kuratoswki sense} (or {\em locally in Hausdorff sense}) to a closed set $F$ if the following conditions are satisfied:
\begin{itemize}
\item[(i)] if $x_n\in F_n$ for every $n$, then any limit point of $\{x_n\}$ belongs to $ F$;
\item[(ii)] any $x\in F$ is the limit of a sequence $\{x_n\}$ with $x_n\in F_n$.
\end{itemize}
One can easily see that $F_n\to F$ in the sense of Kuratowski if and only if dist$(\cdot, F_n)\to$ dist$(\cdot, F)$ locally uniformly in $\R^N$. In particular, by the Arzel\`a-Ascoli Theorem any sequence of closed sets admits a subsequence which converge in the sense of Kuratowski.

\medskip

For the simple proof of the next lemma see for instance \cite[Remark~2.1]{FFLM}.
\begin{lemma}\label{Kuratowski equivalence}
Let $\{\C_n\}_n$ be a sequence of closed convex sets. Then $\C_n\to \C$ in the Kuratowski sense if and only if $\Chi{\C_n}\to \Chi{\C}$ pointwise almost everywhere. In addition, $\C$ is convex.
\end{lemma}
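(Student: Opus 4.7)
The plan is to handle the two implications separately, using the distance-function reformulation of Kuratowski convergence recalled just before the statement, and to extract the converse implication from the direct one by a compactness argument.

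For the implication $\C_n \to \C$ in Kuratowski $\Rightarrow$ $\chi_{\C_n} \to \chi_\C$ almost everywhere, I would start from the fact that $\dist(\cdot, \C_n) \to \dist(\cdot, \C)$ locally uniformly. Since $\C$ is closed convex, its topological boundary $\partial \C$ is Lebesgue negligible (either $\C$ has nonempty interior and $\partial \C$ is locally a Lipschitz graph, or $\C$ sits in a proper affine subspace and is itself null), so it suffices to prove pointwise convergence on $\R^N \setminus \partial \C$. If $x \notin \overline{\C}$, then $\dist(x,\C) > 0$ forces $\dist(x, \C_n) \geq \dist(x,\C)/2$ for $n$ large, hence $x \notin \C_n$. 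If $x \in \mathrm{int}(\C)$, I would fix an $N$-simplex $\Delta \subset \C$ containing $x$ in its interior; each vertex $v_i$ of $\Delta$ is, by Kuratowski condition (ii), the limit of a sequence $v_i^n \in \C_n$, and for $n$ large the convex hull of the $v_i^n$ (contained in $\C_n$ by convexity) still contains $x$. Convexity of $\C$ is an immediate consequence of (i) and (ii): given $x,y \in \C$ approximated by $x_n,y_n \in \C_n$, the segment points $tx_n + (1-t)y_n \in \C_n$ converge to $tx + (1-t)y \in \C$.

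For the converse, the plan is a standard subsequence-of-subsequence argument: from any subsequence of $\{\C_n\}$, the Arzel\`a--Ascoli-type compactness for distance functions (equivalently, sequential compactness of Kuratowski convergence) recalled in the paragraph preceding the statement extracts a further subsequence $\{\C_{n_k}\}$ that Kuratowski-converges to some closed set $\tilde \C$; applying the direct implication already proved gives that $\tilde \C$ is convex and $\chi_{\C_{n_k}} \to \chi_{\tilde \C}$ almost everywhere. Combined with the hypothesis $\chi_{\C_n} \to \chi_\C$ almost everywhere this yields $\chi_{\tilde \C} = \chi_\C$ a.e., and identifying $\C$ with the canonical closed-convex representative $\tilde \C$ the subsequence principle upgrades the convergence to the whole sequence.

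The main step to watch is the last identification: two closed convex sets with the same indicator almost everywhere must both sit in a proper affine subspace, a degenerate case harmless in the applications (where $\C$ is a convex body). Apart from this, no calculation is needed beyond the two basic ingredients: the Lebesgue negligibility of $\partial \C$ for convex $\C$, and the simplex-approximation trick that upgrades Kuratowski condition (ii) to actual membership in $\C_n$ for interior points.
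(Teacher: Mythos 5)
The paper does not reproduce a proof of this lemma; it cites \cite[Remark~2.1]{FFLM}, so there is no in-paper argument to compare against directly. Your proof is correct and self-contained. The forward direction is handled by the standard mechanism: the Lebesgue negligibility of $\partial\C$ for a closed convex set reduces the problem to exterior points (where positivity of $\dist(\cdot,\C)$ propagates to $\dist(\cdot,\C_n)$) and interior points (where the simplex trick plus convexity of each $\C_n$ forces $x\in\C_n$ eventually — one can make the simplex step fully precise by noting that the barycentric coordinates of $x$ with respect to $(v_0^n,\dots,v_N^n)$ depend continuously on the vertices and are strictly positive in the limit). Convexity of the limit set follows directly from conditions (i) and (ii), exactly as you say. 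The converse via subsequence-of-subsequence and the Arzel\`a--Ascoli compactness of Kuratowski limits recalled just before the lemma is the natural route.

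You are right to flag the degenerate case: if $\C$ has empty interior, $\chi_{\C}=0$ a.e.\ and the a.e.\ hypothesis carries no information about the location of $\C$, so the ``only if'' direction can genuinely fail (take $\C_n=\{0\}$ for $n$ even and $\C_n=\{e_1\}$ for $n$ odd). The lemma, as used in the paper, is always applied when the limit has nonempty interior (e.g.\ when $\C$ is a convex body, or a blowdown limit with $\dim\ge N-1$), in which case two closed convex sets agreeing a.e.\ coincide as the closure of their common interior, and the subsequence principle closes the argument. Stating this hypothesis explicitly (nonempty interior of $\C$, or equivalently $|\C|>0$) would make the ``if'' direction airtight rather than leaving it as a remark.
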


\noindent The next lemma is also well known, for the proof see for instance \cite[Lemma 5.1]{FM}.

\begin{lemma}\label{projection onto convex boundary}
Let $\C$ be a closed convex set with nonempty interior and $F\subset\mathbb{R}^N\setminus \C$ a bounded set of finite perimeter. Then
\begin{align}\notag
    P(F;\partial \C) \leq P(F;\mathbb{R}^N\setminus \C)\,.
\end{align}
\end{lemma}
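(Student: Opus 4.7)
\smallskip\noindent\textbf{Proof plan.} The strategy is to compare $F$ with the enlargement $L:=F\cup\C$ and exploit the classical fact that a convex body minimizes perimeter against any superset of finite perimeter. Concretely, fix a ball $B_R$ with $\overline{F}\Subset B_R$, so that $L$ and $\C$ coincide outside $B_R$, and aim to prove the two relations
\begin{equation}\label{eq:plan1}
P(L;B_R) = P(F;\R^N\setminus\C) + P(\C;B_R) - P(F;\pa\C),
\end{equation}
\begin{equation}\label{eq:plan2}
P(L;B_R) \ge P(\C;B_R).
\end{equation}
Combining \eqref{eq:plan1} and \eqref{eq:plan2} immediately yields $P(F;\pa\C) \le P(F;\R^N\setminus\C)$.

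The identity \eqref{eq:plan1} is a direct computation of the reduced boundary of $L$. Since $|F\cap\C|=0$, I classify points of $\pa^*L$ by the densities of $F$ and of $\C$. At any $x\in\pa^*F\cap\pa^*\C$ the sets $F$ and $\C$ must lie on opposite sides of their common approximate tangent hyperplane (otherwise they would overlap on a set of positive measure near $x$), so $\nu_F(x)=-\nu_\C(x)$ and $L$ has density $1$ at $x$, removing $x$ from $\pa^*L$. Points of $\pa^*\C\setminus\pa^*F$ have $F$-density $0$ and hence $L$-density $1/2$, so they remain in $\pa^*L$, while points of $\pa^*F\cap(\R^N\setminus\C)$ trivially belong to $\pa^*L$. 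Taking $\H^{N-1}$-measures inside $B_R$ yields \eqref{eq:plan1}.

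For \eqref{eq:plan2}, I appeal to the $1$-Lipschitz nearest-point projection $\pi_\C\colon\R^N\to\C$. The plan is to show that $\pi_\C$ sends $\pa^*L\cap B_R$ onto $\pa^*\C\cap B_R$ up to $\H^{N-1}$-negligible sets: at $\H^{N-1}$-a.e. $y\in\pa^*\C$ the outward-normal ray $\{y+t\nu_\C(y)\}_{t\ge 0}$ starts inside $\C\subset L$ and, since $L\setminus\C\subset B_R$ is bounded, must cross $\pa^*L$ at a first exit point $x\in B_R$ satisfying $\pi_\C(x)=y$. The Lipschitz bound then gives $\H^{N-1}(\pa^*\C\cap B_R)\le\H^{N-1}(\pa^*L\cap B_R)$, which is \eqref{eq:plan2}.

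The principal technical delicacy is the surjectivity step just sketched, since a priori the ray-exit point could land in $\pa L\setminus\pa^*L$. I expect to handle this either by a Fubini-type argument integrating over $\pa^*\C$, or, more cleanly, by first approximating $\C$ from within by convex bodies $\C_n$ with $C^2$ boundary (for which $\pi_{\C_n}$ is $C^1$ in a neighborhood of $\pa\C_n$), running the argument on each $\C_n$ where reduced and topological boundaries coincide, and then passing to the Kuratowski limit using Lemma \ref{Kuratowski equivalence} together with lower semicontinuity of the perimeter.
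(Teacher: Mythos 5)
The paper does not contain its own proof of this lemma; it cites \cite[Lemma 5.1]{FM}, so there is no internal argument to compare against, and I assess your plan on its own merits. Your reduction is correct: the decomposition $P(L;B_R)=P(F;\R^N\setminus\C)+P(\C;B_R)-P(F;\partial\C)$ for $L=F\cup\C$ holds because $|F\cap\C|=0$ forces $\nu_F=-\nu_\C$ at every point of $\partial^*F\cap\partial^*\C$ (so those points have $L$-density one and leave $\partial^*L$), and because $\partial^*\C=\partial\C$ up to an $\H^{N-1}$-null set for a convex body with nonempty interior. This correctly recasts the lemma as the classical inequality $P(\C;B_R)\le P(L;B_R)$ for a convex set and a finite-perimeter superset agreeing with it outside a compact set, and at that point one may simply cite that fact and stop.

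The genuine gap is exactly the one you flag: guaranteeing that the exit point of the outward normal ray lands in $\partial^*L$ rather than merely in $\partial L$, so that the $1$-Lipschitz bound on $\pi_\C$ applies to an $\H^{N-1}$-almost-all of $\partial\C\cap B_R$. Neither of the repairs you sketch closes it as stated. The Fubini route needs a slicing theorem along the \emph{non-parallel} family of normal rays; for a non-smooth convex body the parametrization $(y,t)\mapsto y+t\nu_\C(y)$ is defined only for $\H^{N-1}$-a.e.\ $y$ and is not Lipschitz, so Vol'pert's theorem cannot be invoked directly in those curvilinear coordinates. The smooth-approximation route simply transfers the same difficulty to $\pi_{\C_n}$: you still need the ray-exit point to lie in $\partial^*L$ for each $n$, and the normal-ray map of $\C_n$ is a global diffeomorphism onto $\R^N\setminus\C_n$ only if $\C_n$ is strictly convex, which must be arranged. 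A cleaner way to finish is to intersect everything with $B_{R'}$ for $R'>R$, exhaust the bounded convex body $\C\cap B_{R'}$ from outside by a decreasing sequence of polytopes $P_m$, and reduce to the single elementary fact that intersecting a bounded set of finite perimeter with one half-space does not increase its perimeter; that is a genuine parallel-slicing statement, so Vol'pert applies with no difficulty, and then $L^1$-convergence of $L\cap P_m$ to $\C\cap B_{R'}$ together with lower semicontinuity of the perimeter yields $P(\C;B_R)\le P(L;B_R)$.
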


%Throughout the paper, with a common abuse of notation, we write $E_h\to E$ in $L^1$ ($L^1_{loc}$) instead of  $\Chi{E_h}\to\Chi{E}$ in $L^1$ ($L^1_{loc}$). 

The following result, see \cite[Lemma 2.1]{MPS}, is a simplified version of a nucleation lemma  due to Almgren \cite[VI.13]{A}; see also \cite[Lemma 29.10]{M}.

\begin{lemma}\label{lm:nonvanishing}
There exists a constant $c(N)>0$ such that if $E\subset\mathbb{R}^N$ is a set of finite perimeter and finite measure, then, setting $Q:=(0,1)^N$, we have 
$$
\sup_{z\in \mathbb{Z}^N}|E\cap (z+Q)|\geq c(N)\min\Bigl\{\Big(\frac{|E|}{P(E)}\Big)^N, 1\Bigr\}\,.
$$ 
\end{lemma}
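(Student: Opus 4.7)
The plan is to tile $\R^N$ by the integer translates $Q_z = z+Q$ of the unit cube, apply the relative isoperimetric inequality on each $Q_z$ to the restrictions $E_z := E\cap Q_z$, and sum. Set $m := \sup_{z\in\Z^N}|E_z|$. Since $|E_z|\le 1$, if $m\ge 1/2$ the inequality holds trivially with constant $1/2$, because the right-hand side is at most $1\le 2m$. So I may assume $m<1/2$, i.e.\ $|E_z|<|Q_z|/2$ for every $z$.

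Under this smallness condition, the relative isoperimetric inequality on the bounded convex open set $Q_z^{\circ}$ gives, for each $z$,
$$
|E_z|^{(N-1)/N}\le C(N)\,P(E_z;Q_z^{\circ})=C(N)\,P(E;Q_z^{\circ})\,,
$$
where the equality uses that $E$ and $E_z$ agree inside the open cube. Summing over $z\in\Z^N$, and observing that the open cubes $\{Q_z^{\circ}\}_z$ are pairwise disjoint and cover $\R^N$ up to a Lebesgue-null set, I obtain
$$
\sum_{z\in\Z^N}|E_z|^{(N-1)/N}\le C(N)\sum_{z}P(E;Q_z^{\circ})\le C(N)\,P(E)\,.
$$

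To conclude, I use the pointwise bound $|E_z|\le m^{1/N}|E_z|^{(N-1)/N}$ coming from $|E_z|\le m$, and sum:
$$
|E|=\sum_{z}|E_z|\le m^{1/N}\sum_{z}|E_z|^{(N-1)/N}\le C(N)\,m^{1/N}\,P(E)\,.
$$
Rearranging yields $m\ge c(N)(|E|/P(E))^N$ with $c(N)=C(N)^{-N}$, and together with the trivial case $m\ge 1/2$ this gives the claim with constant $\min\{c(N),1/2\}$.

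There is no genuine obstacle in this plan; the finite-measure hypothesis only enters to make $\sum_z|E_z|=|E|$ a convergent sum, and the mild technical care required is in using the version of the relative isoperimetric inequality on the \emph{open} cube, so that the grid hyperplanes contribute nothing to the sum of perimeters. All the ingredients (Kuratowski convergence, density lemmas, etc.) from the earlier preliminaries are not needed for this particular statement.
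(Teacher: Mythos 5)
Your proof is correct, and it is exactly the standard nucleation argument (tiling by unit cubes, relative isoperimetric inequality on each open cube under the smallness dichotomy $m<1/2$, summation, and the elementary bound $|E_z|\le m^{1/N}|E_z|^{(N-1)/N}$). The paper itself does not reprove this lemma but cites \cite[Lemma 2.1]{MPS} (a simplification of Almgren's nucleation lemma \cite[VI.13]{A}; see also \cite[Lemma 29.10]{M}), whose proof is the one you gave.
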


\noindent Finally we conclude with a useful construction that allows one to locally dilate sets of finite perimeter with a controlled change in volume and perimeter. 
\begin{lemma}\label{lm:ef}
There exist $\eps(N), c(N)>0$ with the following property. Let $E\subset\R^N$ be a set of finite perimeter with $|E|<m$ and $B_R$ a ball such that
$$
|E\cap B_{\frac{R}{2}}|\leq\eps R^N,\quad |E\cap B_R|\geq \frac{\omega_NR^N}{2^{N+2}},
$$
with $\eps\in(0,\eps(N))$. Then there exists $\sigma_0=\sigma_0(E,m)>0$ such that for all $\sigma\in(0,\sigma_0)$ there exists a bi-Lipschitz map $\Phi_\sigma:\R^N\mapsto\R^N$,  with $\Phi_\sigma(x)=x$ for all $|x|\geq R$, such that, setting $\widetilde E=\Phi_\sigma(E)$, one has
$$
|\widetilde E|<m, \quad P(E;\overline{B_R})-P(\widetilde E;\overline{B_R})\geq -2^NN\sigma P(E;\overline{B_R}), \qquad |\widetilde E|-|E|\geq c(N)\sigma R^N.
$$
\end{lemma}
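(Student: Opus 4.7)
The plan is to construct explicitly a radial bi-Lipschitz map $\Phi_\sigma:\R^N\to\R^N$, equal to the identity outside $\overline{B_R}$, whose Jacobian is designed so that the volume of $\widetilde E = \Phi_\sigma(E)$ exceeds $|E|$ by a controlled amount of order $\sigma R^N$. I would write $\Phi_\sigma(x) = f_\sigma(|x|)\,x/|x|$, where $f_\sigma:[0,\infty)\to[0,\infty)$ is built in three pieces: on $[0, R/2]$ take $f_\sigma(r) = \mu_\sigma r$ (a uniform radial contraction by a factor $\mu_\sigma<1$); on $[R/2, R]$ define $f_\sigma$ implicitly by $f_\sigma(r)^N = (\mu_\sigma R/2)^N + (1+\sigma)(r^N - (R/2)^N)$; and outside $B_R$, keep $f_\sigma(r) = r$. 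The matching condition $f_\sigma(R)=R$ needed for continuity at $\partial B_R$ forces
\[
\mu_\sigma = \bigl(1 - \sigma(2^N-1)\bigr)^{1/N},
\]
which is strictly positive for $\sigma < 1/(2^N-1)$; with this choice $f_\sigma$ is strictly increasing, so $\Phi_\sigma$ is a bi-Lipschitz homeomorphism of $\R^N$.

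Differentiating the defining relation on $[R/2, R]$ yields $f_\sigma'(r)\,f_\sigma(r)^{N-1} = (1+\sigma)r^{N-1}$, so the Jacobian $J\Phi_\sigma = f_\sigma'(r)(f_\sigma(r)/r)^{N-1}$ will equal the constant $1+\sigma$ on the annulus $B_R\setminus B_{R/2}$, while on $B_{R/2}$ it will be $\mu_\sigma^N = 1 - \sigma(2^N-1)$. Using the area formula together with the hypotheses $|E\cap B_{R/2}| \le \eps R^N$ and $|E\cap B_R|\ge \omega_N R^N/2^{N+2}$, the volume change will be
\[
|\widetilde E| - |E| = \int_E (J\Phi_\sigma - 1)\,dx \ge \sigma\Bigl(\tfrac{\omega_N R^N}{2^{N+2}} - \eps R^N\Bigr) - \sigma(2^N-1)\,\eps R^N,
\]
which is $\ge c(N)\sigma R^N$ as soon as $\eps(N)$ is chosen small enough. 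The bound $|\widetilde E| < m$ will be immediate from $J\Phi_\sigma \le 1+\sigma$ and $|E|<m$ once $\sigma < (m-|E|)/|E|$, and this, together with $\sigma < 1/(2^N-1)$, is what will fix $\sigma_0(E,m)$.

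For the perimeter estimate, the eigenvalues of $D\Phi_\sigma$ in spherical coordinates are $f_\sigma'(r)$ (radial, multiplicity $1$) and $f_\sigma(r)/r$ (tangential, multiplicity $N-1$). A short computation using the defining relation, together with the easy fact that $f_\sigma(r)\le r$ on $[R/2, R]$, bounds both by $(1+\sigma)/\mu_\sigma^{N-1} \le 1 + C(N)\sigma$; hence $\mathrm{Lip}(\Phi_\sigma) \le 1 + C(N)\sigma$, and the standard inequality $\H^{N-1}(\Phi_\sigma(S)) \le \mathrm{Lip}(\Phi_\sigma)^{N-1}\H^{N-1}(S)$ for bi-Lipschitz maps will yield, for $\sigma$ small, $P(\widetilde E;\overline{B_R}) \le (1+2^NN\sigma)P(E;\overline{B_R})$, which is the required inequality after rearrangement. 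The hard part is really the design of $\Phi_\sigma$: because $\Phi_\sigma = \mathrm{id}$ on $\partial B_R$, one has $\int_{B_R} J\Phi_\sigma\,dx = |B_R|$, so a Jacobian greater than $1$ on the annulus must be compensated by a Jacobian less than $1$ on $B_{R/2}$; the hypothesis $|E\cap B_{R/2}|\le\eps R^N$ is precisely what guarantees this compensation is paid on a region of negligible $E$-mass, so the net volume change remains positive.
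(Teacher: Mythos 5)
Your construction is essentially the one the paper invokes by referring to \cite[Theorem 1.1]{EF}: a radial bi-Lipschitz map equal to the identity outside $B_R$, contracting uniformly on $B_{R/2}$ and with Jacobian $1+\sigma$ on the annulus, so that the volume gain on the annulus dominates the volume loss in $B_{R/2}$ thanks to the hypothesis $|E\cap B_{R/2}|\le\eps R^N$. The computations check out — in particular $\mu_\sigma^N=1-\sigma(2^N-1)$, the Jacobian bookkeeping gives $|\widetilde E|-|E|\ge\sigma R^N(\omega_N/2^{N+2}-2^N\eps)$, and the first-order coefficient of $\mathrm{Lip}(\Phi_\sigma)^{N-1}-1$, namely $(N-1)+\tfrac{(N-1)^2(2^N-1)}{N}$, is indeed below $2^N N$ for every $N$ — so the proof is correct and follows the same route.
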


\noindent The proof of Lemma \ref{lm:ef} can be deduced from the proof of \cite[Theorem 1.1]{EF}.

\medskip

We conclude this section with the relative isoperimetric inequality outside convex sets due to Choe, Ghomi and Ritor\'e together with the characterization of the equality case. 
\begin{theorem}\label{th:isoperim}
Let $\C\subset\R^N$ be a convex body. For any set of finite perimeter $E\subset\R^N\setminus \C$ we have
\beq\label{isoperim1}
P(E;\R^N\setminus \C)\geq N\Big(\frac{\omega_N}{2}\Big)^{\frac{1}{N}}|E|^{\frac{N-1}{N}}\,.
\eeq
Moreover, if equality holds in \eqref{isoperim1}, then $E$ is a half ball supported on a facet of $\C$.
\end{theorem}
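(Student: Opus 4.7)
The plan is to reduce the relative isoperimetric inequality outside $\C$ to the classical Euclidean isoperimetric inequality via a doubling construction, and then to extract the equality case from the rigidity of the classical inequality. First, by lower semicontinuity of perimeter together with standard approximation arguments (smoothing $\C$ through Minkowski sums with small balls, and mollifying $E$), I would reduce to the case where $\C$ has $C^2$ boundary, $E$ is bounded and smooth, and $\pa E$ meets $\pa\C$ transversely; the conclusion for the general case then follows by passing to the limit.

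The heart of the argument is to construct, given $E\subset\R^N\setminus\C$ with $|E|=v$, a doubled set $E^\#\subset\R^N$ satisfying $|E^\#|=2v$ and $P(E^\#;\R^N)\leq 2\,P(E;\R^N\setminus\C)$. In the model case where $\C$ is a half-space $H$, this is simply the union of $E$ with its isometric reflection across $\pa H$: the perimeter on $\pa^* E\cap\pa H$ is canceled inside $E^\#$, while the reflected copy contributes an equal amount on the opposite side, so the total doubles exactly from $P(E;\R^N\setminus H)$. For a general convex body $\C$ the isometric reflection is unavailable, and I would exploit the $1$-Lipschitz nearest-point projection $\pi_\C:\R^N\to\C$ (whose non-expansiveness is equivalent to convexity of $\C$) to construct a measure-preserving, perimeter-nonincreasing map playing the role of the reflection; convexity of $\C$ is precisely what ensures that this doubling does not inflate the perimeter relative to the half-space model. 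Once $E^\#$ is in hand, the classical Euclidean isoperimetric inequality yields
\[
2\,P(E;\R^N\setminus\C)\;\geq\;P(E^\#;\R^N)\;\geq\;N\,\omega_N^{1/N}\,(2v)^{(N-1)/N},
\]
from which \eqref{isoperim1} follows after dividing by $2$.

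For the rigidity claim, I would argue as follows: if equality holds in \eqref{isoperim1}, then both inequalities in the display above must be equalities. The rigidity of the classical isoperimetric inequality forces $E^\#$ to be a Euclidean ball $B$, and the structure of the doubling forces the ``mirror'' of the construction to cut $B$ into two equal half-balls. Since $E^\#\setminus E$ must be absorbed by $\C$ while $E\subset\R^N\setminus\C$, this mirror has to sit inside $\pa\C$, which forces $\pa\C$ to contain an $(N-1)$-dimensional flat disk---a facet of $\C$---on which $E$ is supported as a half-ball. The hard part of the whole plan is the doubling construction itself for convex bodies that are neither half-spaces nor smooth: producing a measure-preserving, perimeter-nonincreasing ``reflection'' through a curved or singular $\pa\C$ is delicate, and is precisely why the original argument in \cite{CGR} replaces the geometric reflection by an elliptic PDE / optimal-transport construction available only for $C^2$ bodies, while the full rigidity statement for arbitrary convex bodies in \cite{FM} requires a careful analysis of the contact set $\overline{E}\cap\pa\C$ and of the supporting hyperplanes of $\C$ along it.
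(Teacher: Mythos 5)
The paper does not actually prove Theorem~\ref{th:isoperim}: it records the statement and then simply cites \cite{CGR} for the inequality and for the equality case when $\partial\C$ is $C^2$, and \cite{FM} for the equality case for arbitrary convex bodies. So the relevant comparison is between your proposed argument and the proofs in those references.

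Your proposal contains a genuine gap at its central step. The doubling argument is sound and sharp when $\C$ is a half-space, because reflection across a hyperplane is an isometry of $\R^N$ that preserves Lebesgue measure and reduced boundary. For a general convex body, however, you never actually construct the map that is supposed to play the role of this reflection; you only assert that the $1$-Lipschitz nearest-point projection $\pi_\C$ should give rise to a measure-preserving, perimeter-nonincreasing ``doubling.'' This does not follow. The projection $\pi_\C$ is neither injective nor surjective onto $\R^N\setminus\C$, collapses entire normal rays to points of $\partial\C$, and has Jacobian far from $1$ as soon as $\partial\C$ has curvature; there is no obvious way to upgrade it to a bijection of $\R^N\setminus\C$ onto a set of full measure in $\C$ that preserves volume and does not increase $\H^{N-1}$ of boundaries. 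Indeed, for $\C$ a ball the natural candidate (spherical inversion) is conformal but badly volume-distorting, and a volume-correcting reweighting destroys the perimeter monotonicity. You acknowledge at the end that this construction is ``the hard part,'' but acknowledging a gap does not close it: as written, the inequality \eqref{isoperim1} is not proved for any $\C$ other than a half-space, and the rigidity argument, which relies on the doubled set being a ball, inherits the same gap. I would also flag that your description of the method of \cite{CGR} as ``an elliptic PDE / optimal-transport construction'' does not match that paper: the proof there proceeds through existence and regularity of constrained minimizers meeting $\partial\C$ orthogonally, combined with a sharp lower bound on the total positive (Gauss--Kronecker) curvature of free-boundary hypersurfaces on convex bodies; the equality analysis in \cite{FM} then refines this by a careful study of the contact set and of supporting hyperplanes of $\C$. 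If you want to pursue the doubling idea, you would need to produce the doubling map explicitly and prove the two required monotonicity properties, and it is far from clear this is possible outside the half-space case.
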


\noindent The proof of \eqref{isoperim1} and the characterization of the equality case when $\C$ is a $C^2$ convex set has been established in \cite{CGR}, while the characterization of the equality case for general convex sets has been obtained in  \cite{FM}.

\section{Asymptotic dimension of a convex body}\label{section dstar} In this section we discuss various properties of the asymptotic dimension $d^*(\C)$ of a convex body $\C$ introduced in \eqref{asymptotic dimension}. We also recall the definition \eqref{recession cone} of the recession cone $\C_\infty$ of $\C$.

 \begin{remark}[Convex bodies with zero/full asymptotic dimension]\label{dstarzero}
 Note that if $\C$ is a bounded convex body then $d^*(\C)=0$. Conversely, if $d^*(\C)=0$ and we fix $x\in \C$, then $\lambda(\C-x)$ must converge to $\{0\}$ as $\lambda \to 0$. Therefore we have that the recession cone  $\C_\infty=\{0\}$  and thus $\C$ is bounded, see \cite[Theorem 8.4]{R}. Note also that if $d^*(\C)=N$ it is clear from the definition that $\C$ contains open balls of arbitrarily large radius. The converse is also true. Indeed, if  $B_{r_n}(x_n)\subset \C$ and $r_n\to \infty$, by testing the definition with $r_n^{-1/2}(\C-x_n)$ we conclude  $d^*(\C)=N$.

 \end{remark}
  
 \begin{remark}  Note that if $\C$ is an unbounded convex body,
 testing \eqref{asymptotic dimension} with the constant sequence $x_n=x\in \C$ and any $\lambda_n\to0$, we deduce that 
 $$
 d^*(\C)\geq\dim \C_\infty,
 $$
with the inequality being possibly strict. Note also that $d^*(\C)$ can also take any value between $1$ and $N$. Indeed,
if $k\in\{1,\dots,N-1\}$, the set $\C=\R^k\times[0,1]^{N-k}$ is such that $d^*(\C)=\dim \C_\infty=k$ (the case $d^*(\C)=N$ being covered in the previous remark). Conversely, Proposition~\ref{general structure lemma} below shows that any convex $\C$ such that $d^*(\C)=k\in\{1,\dots,N-1\}$ is contained in the product of a $k$-dimensional subspace times a bounded convex set. Finally, note that the set $\C=\{(x',x_N)\in\R^{N-1}\times\R:\, x_N\geq|x'|^2\}$ satisfies $\dim \C_\infty=1$, $d^*(\C)=N$.
 \end{remark}

In the next proposition we deal with the structure of a convex body $\C$ such that $1\leq d^*(\C)\leq N-1$. To this aim, if $Z\subset\R^N$ is a subspace, we denote by $\mathbf{p}_{Z}$ the orthogonal projection on $Z$. Furthermore, $Z^\perp$ will denote the subspace orthogonal to $Z$ and if  $z\in\R^N$, we will denote by $z^\perp$ the hyperplane orthogonal to $z$ through the origin.

\begin{proposition}\label{general structure lemma}
Let $\C\subset \mathbb{R}^N$ be a  convex body such that  $1\leq d^*(\C)\leq N-1$. Then there exists a $d^*(\C)$-dimensional subspace $Z$ containing $\C_\infty$ such that
\begin{enumerate}
     \item[(a)] $\C \subset Z + \mathrm{cl}\,(\mathbf{p}_{Z^\perp}(\C))$,
    \item[(b)] $\mathrm{cl}\,(\mathbf{p}_{Z^\perp}(\C))$ is bounded.
  \end{enumerate}
   Moreover, for every $z\in \C_\infty$
\begin{enumerate}
    \item[(c)]  the sets $\C_t:=\mathbf{p}_{z^\perp}(\C\cap (tz+z^\perp))$ satisfy $\C_t \subset \C_{t'}$ whenever $t< t'$,
    \item[(d)]  $\C_t$ converge in the Kuratowski sense to $\mathrm{cl}\,( \mathbf{p}_{z^\perp}(\C))$ as $t\to \infty$, and
    \item[(e)]  $d^*(\mathrm{cl}\,( \mathbf{p}_{z^\perp}(\C)))=d^*(\C)-1$.
   \end{enumerate}
%\begin{align}\label{cylindrical containment lemma}
%    \C \subset Z + \mathrm{cl}\,(\mathbf{p}_{Y}(\C))\,.
%\end{align}
\end{proposition}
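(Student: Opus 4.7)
The plan is to prove (c) and (d) directly, then establish (e) as a two-sided dimensional inequality, and finally deduce (a) and (b) by induction on $k := d^*(\C)$ using (e) to step down the ambient dimension by one. Throughout I normalize $|z|=1$.

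Part (c) is immediate from $z \in \C_\infty$: if $p \in \C_t$ then $p + tz \in \C$, so $p + t'z = (p+tz) + (t'-t)z \in \C$ for any $t' > t$, whence $p \in \C_{t'}$. For (d), observe $\bigcup_t \C_t = \mathbf{p}_{z^\perp}(\C)$ because any $x \in \C$ projects to $\mathbf{p}_{z^\perp}(x) \in \C_{\langle x,z\rangle}$; the $\C_t$ are closed convex and increasing in $t$ by (c), so their indicators tend pointwise a.e.\ to $\chi_{\mathbf{p}_{z^\perp}(\C)}$, and Lemma \ref{Kuratowski equivalence} delivers Kuratowski convergence to $\mathrm{cl}(\mathbf{p}_{z^\perp}(\C))$.

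For (e), set $\mathcal{D} := \mathrm{cl}(\mathbf{p}_{z^\perp}(\C))$. The lower bound $d^*(\mathcal{D}) \geq k - 1$ comes from a witness $K = \lim \lambda_n(\C - x_n)$ of dimension $k$: setting $p_n := \mathbf{p}_{z^\perp}(x_n) \in \mathcal{D}$ and $\mu_n := \lambda_n$, the inclusion $\mathbf{p}_{z^\perp}(\lambda_n(\C - x_n)) \subseteq \mu_n(\mathcal{D} - p_n)$ implies that any subsequential Kuratowski limit $L$ of $\mu_n(\mathcal{D} - p_n)$ contains $\mathbf{p}_{z^\perp}(K)$, and this projection has dimension exactly $k - 1$ since $\C_\infty \subseteq K$ forces $z \in \mathrm{aff}(K)$, dropping one dimension upon projection along $z$. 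The upper bound $d^*(\mathcal{D}) \leq k - 1$ is the crux: suppose for contradiction that $L$ is a $k$-dimensional Kuratowski limit of some $\mu_n(\mathcal{D} - p_n)$ and select $k+1$ affinely independent points $\ell^{(0)}, \ldots, \ell^{(k)} \in L$ together with approximating sequences $p_n^{(j)} \in \mathcal{D}$ satisfying $\mu_n(p_n^{(j)} - p_n) \to \ell^{(j)}$. Approximate each $p_n^{(j)}$ (and $p_n$) within $1/n$ by points $q_n^{(j)}, q_n \in \mathbf{p}_{z^\perp}(\C) = \bigcup_t \C_t$, and use (c) to pick a common height $T_n$ so large that $q_n + T_n z$ and every $q_n^{(j)} + T_n z$ lie in $\C$. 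Setting $x_n := q_n + T_n z$, one computes $\mu_n(q_n^{(j)} + T_n z - x_n) = \mu_n(q_n^{(j)} - q_n) \to \ell^{(j)}$, so any subsequential Kuratowski limit $M$ of $\mu_n(\C - x_n)$ contains the $\ell^{(j)}$'s (all in $z^\perp$) together with the ray $\R_+ z$ (because $z \in \C_\infty$ makes $\R_+ z \subseteq \mu_n(\C - x_n)$ for every $n$); convexity of $M$ then forces $\dim M \geq k + 1$, contradicting $d^*(\C) = k$.

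Finally, (a) and (b) follow by induction on $k$. For the base case $k = 1$, part (e) gives $d^*(\mathcal{D}) = 0$, so Remark \ref{dstarzero} makes $\mathcal{D}$ bounded; the choice $Z := \R z$ satisfies $\C_\infty \subseteq Z$ because any $v \in \C_\infty$ has $\mathbf{p}_{z^\perp}(v) \in \mathcal{D}_\infty = \{0\}$, and $\mathbf{p}_{Z^\perp}(\C) \subseteq \mathcal{D}$ is bounded. For the inductive step $2 \leq k \leq N - 1$, part (e) yields $d^*(\mathcal{D}) = k - 1 \in \{1, \ldots, N-2\}$, where $\mathcal{D}$ is a proper convex body in $z^\perp \cong \R^{N-1}$ (its interior is nonempty because linear projections are open maps, and $\mathcal{D} \neq z^\perp$ since $d^*(\mathcal{D}) < N - 1$), so the inductive hypothesis produces a $(k-1)$-dimensional subspace $Z_0 \subset z^\perp$ containing $\mathcal{D}_\infty$ with $\mathrm{cl}(\mathbf{p}_{Z_0^\perp}(\mathcal{D}))$ bounded (orthogonal taken within $z^\perp$). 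Setting $Z := Z_0 \oplus \R z$, one checks that $\dim Z = k$, $Z^\perp = Z_0^\perp \cap z^\perp$, $\mathbf{p}_{Z^\perp}(\C) = \mathbf{p}_{Z_0^\perp}(\mathbf{p}_{z^\perp}(\C)) \subseteq \mathbf{p}_{Z_0^\perp}(\mathcal{D})$ is bounded (yielding (b)), and $\mathbf{p}_{z^\perp}(\C_\infty) \subseteq \mathcal{D}_\infty \subseteq Z_0$ together with $\R z \subseteq Z$ forces $\C_\infty \subseteq Z$; (a) is tautological from the orthogonal decomposition. The main obstacle is the upper bound in (e) — specifically, the common-height lifting of finitely many points from $\mathcal{D}$ back into $\C$, which is precisely what (c) and the monotone-filling description underlying (d) enable.
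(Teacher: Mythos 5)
Your proof is correct and takes essentially the same approach as the paper: establish (c), (d), then (e) as a two-sided inequality, and deduce (a), (b) by induction on $d^*(\C)$ via part (e). The only differences are cosmetic — for the upper bound in (e) you lift a simplex at a single common height and invoke the ray $\R_+ z\subset M$, whereas the paper lifts at two heights $t_{1,n}<t_{2,n}$ with $\lambda_n(t_{2,n}-t_{1,n})=1$; and for the lower bound you project the witness $K$ onto $z^\perp$, whereas the paper bounds $\lambda_n(\C-x_n)$ by containment in $\mathrm{span}\{z\}+\lambda_n(D-\mathbf{p}_{z^\perp}(x_n))$ — neither of which changes the strategy.
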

\begin{remark} Let $\C$ be an unbounded convex body and assume that $Z$ is a subspace such that $(a)$ and $(b)$ of Proposition~\ref{general structure lemma} hold. Then  $\dim Z=d^*(Z + \mathrm{cl}\,(\mathbf{p}_{Z^\perp}(\C)))\geq d^*(\C)$, where we used the fact that $d^*(\cdot)$ is monotone with respect to the inclusion. Therefore one could equivalently define $d^*(\C)$ as the smallest $k\in\N$ such that there exists a subspace $Z$ with $\dim Z=k$ such that $\mathbf{p}_{Z^\perp}(\C))$ is bounded and $\C \subset Z + \mathrm{cl}\,(\mathbf{p}_{Z^\perp}(\C))$; if no such subspace exists, then $d^*(\C)=N$.
\end{remark}
\begin{proof}[Proof of Proposition~\ref{general structure lemma}] We first prove $(c)$--$(e)$ and then $(a)$ and $(b)$. By Remark~\ref{dstarzero}, $\C$ is unbounded and therefore $\C_\infty$ is nonempty, and so we can fix some $z\in \C_\infty$. By the definition of $\C_\infty$, $x+\alpha z \in \C$ for all $x\in \C$ and $\alpha \geq 0$. From this we deduce the nested property $(c)$ of $\{\C_t\}$. Item $(d)$ then follows from the fact that any sequence of increasing, closed, convex sets converges in the Kuratowski sense to the closure of the union of its elements and observing that the closure of such a union coincides with $\mathrm{cl}\,(\mathbf{p}_{Z^\perp}(\C))$.
\par\medskip
Moving on to $(e)$, let us set $D=\mathrm{cl}\,(\mathbf{p}_{z^\perp}(\C))$ for brevity. We begin by proving that
\begin{align}\label{dstar D leq}
    d^*(D)\leq d^*(\C)-1\,.
\end{align}
Let  $x_n\in  D$ and $\lambda_n \to 0$ be such that $\lambda_n(D - x_n)$ converges in the Kuratowski sense to $D'$, where $\mathrm{dim}\, D'=d^*(D)$. Then, taking into account $0\in D'$, there exist linearly independent vectors $y_1,\dots, y_{d^*(D)}\in D'$. %and $\varepsilon>0$ such that for any choices of points $w_i \in B_\varepsilon(y_i)$, $\{w_i \}$ form a linearly independent set as well.
By the convergence of $\lambda_n(D - x_n)$ to $D'$, we can therefore choose a sequence $\varepsilon_n \to 0$ and $w_{i,n}\in z^\perp$ such that 
\begin{align}\notag
    w_{i,n} \in B_{\varepsilon_n}(y_i) \cap \lambda_n(\mathrm{ri}(D) - x_n)\, \quad\forall n\in \mathbb{N}\,, 1\leq i \leq d^*(D) \,,
\end{align}
where $\mathrm{ri}(\cdot)$ stands for the relative interior of its argument.
Next, since $\mathrm{ri}(D)\subset \mathbf{p}_{z^\perp}(\C)$ and $z\in \C_\infty$, we may choose $T_n\geq 0$ large enough and $w_{0,n}\in z^\perp$ such that  for all $t\geq T_n$
\begin{align}\notag
    w_{0,n} &\in \lambda_n(\C_t - x_n)\,,\quad w_{0,n}\to 0=:y_0\,, \\ \notag
    w_{i,n} &\in \lambda_n (\C_t - x_n)\,,\quad\,  1\leq i \leq d^*(D)\,,
\end{align}
or, equivalently,
\begin{align}\notag
    w_{0,n} &\in \lambda_n(tz + \C_t - (tz + x_n))\,,\quad w_{0,n}\to 0\,,\\ \notag
    w_{i,n}&\in \lambda_n(tz + \C_t - (tz +x_n)) \subset \lambda_n( \C - (tz + x_n))\,.
\end{align}
Let $T_n\leq t_{1,n}<t_{2,n}$ be such that $\lambda_n(t_{2,n} - t_{1,n})= 1$. Then 
\begin{align}\notag
    w_{0,n} &\in \lambda_n(t_{1,n}z + \C_{t_{1,n}}-(t_{1,n}z + x_n) ) \\ \notag
    w_{1,n}&\in \lambda_n(t_{1,n}z + \C_{t_{1,n}} - (t_{1,n}z + x_n))\subset \lambda_n(\C - (t_{1,n}z + x_n))\,, \\ \notag
    z + w_{i,n}&\in \lambda_n(t_{2,n}z + \C_{t_{1,n}} - (t_{1,n}z + x_n))\subset \lambda_n(\C - (t_{1,n}z + x_n))\quad\forall 1\leq i \leq d^*(D)\,,
\end{align}
where in the last inclusion we used the fact that $\C_{t_{1,n}}\subset \C_{t_{2,n}}$.
By the convergence $w_{i,n}\to y_i$, we can restrict to a further subsequence, which we do not notate, that satisfies $\lambda_n(\C - (t_{1,n}z + x_n)) \to \C'$ in the Kuratowski sense and
\begin{equation}\notag
V:=\{0, y_1, z+y_1, \dots, z+y_{d^*(D)} \}\subset \C'
\end{equation}
for some closed, convex $\C'$. Since $\{y_i\}_{i=1}^{d^*(D)}\subset z^\perp$ are linearly independent, the elements of $V$ are the vertices of a $(d^*(D)+1)$-dimensional simplex $S$, which belongs to $\C'$ since $\C'$ is convex. Since $t_{1,n}z+x_n\in \C$, we have thus shown \eqref{dstar D leq}.\par\medskip
For the reverse inequality
\begin{align}\label{dstar D geq}
    d^*(D)\geq d^*(\C)-1 \,,
\end{align}
we recall that by the definition of $D$,
\begin{align}\label{cylindrical containment}
    \C \subset\mathrm{cl}\,(\mathbf{p}_{z^\perp}(\C))+ \mathrm{span}\,\{z \} =D + \mathrm{span}\,\{z \}\,.
\end{align}
Let $\lambda_n\to 0$, $x_n\in \C$ be such that $\lambda_n(\C-x_n) \to \C'$ for some $\C'$ with $\dim \C'=d^*(\C)$. By \eqref{cylindrical containment}, we have
\begin{align}\notag
    \lambda_n(\C- x_n) &\subset \lambda_n( D + \mathrm{span}\,\{z \} - x_n) \\ \notag
    &=\mathrm{span}\,\{z \}+ \lambda_n ( D   - \mathbf{p}_{z^\perp}(x_n))\,.
\end{align}
Up to a further subsequence, the right hand side of the previous equation converges in the Kuratowski sense to $\mathrm{span}\,\{z \}+D'$ for some closed convex $D'\subset z^\perp$ with $\dim D' \leq d^*(D)$. Thus
\begin{align}\notag
    \dim \C' \leq \dim D' + 1 \leq d^*(D)+1\,,
\end{align}
which is \eqref{dstar D geq}, thus showing item $(e)$.
\par\medskip
To finish the proof  it remains to show $(a)$ and $(b)$, which we do by strong induction on $1\leq d^*(\C)\leq N-1$. For the base case when $d^*(\C)=1$, item $(a)$ follows from  to the containment \eqref{cylindrical containment}. Moreover,
item $(e)$ implies that the associated $D:=\mathrm{cl}\,( \mathbf{p}_{z^\perp}(\C))\subset z^\perp$ satisfies $d^*(D)=0$, and so by Remark~\ref{dstarzero}, $D$ is a bounded convex body contained in $z^\perp$, which is $(b)$.  
\par
\medskip
Suppose now that $(a)$ and $(b)$ are true for any  convex body $\C'$ with $1\leq d^*(\C') \leq k\leq N-2$. If $\C\subset \mathbb{R}^N$ is a convex body with $d^*(\C)=:k+1>1$ and $z\in \C_\infty$, we apply $(c)$, $(d)$ and $(e)$ to $\C$ and to the  $(N-1)$-dimensional convex body $D:=\mathrm{cl}\,( \mathbf{p}_{z^\perp}(\C))\subset z^\perp$ with $d^*(D)=k\geq 1$ to obtain
\begin{equation}\label{C containment}
    \C \subset \mathrm{span}\,\{z \}+ D\,.
\end{equation}
By the induction hypothesis applied to $D$, we may obtain $N-1$ orthonormal vectors $$
\{z_2,\dots,z_{1+d^*(D)},y_{1},\dots, y_{N-d^*(D)-1} \}\subset z^\perp
$$
such that setting $Z'=\mathrm{span}\,\{z_2,\dots,z_{1+d^*(D)} \}\subset z^\perp$ and $Y=\mathrm{span}\,\{y_1,\dots,y_{N-d^*(D)-1} \}\subset z^\perp$, we have %and the set $\mathrm{cl}\,( \mathbf{p}_{z^\perp}(\C))\subset \mathrm{span}\, \{z_{2+d^*(D)},\dots, z_N\}$ such that
\begin{equation}\label{D containment}
    D\subset Z'+ \mathrm{cl}\,(\mathbf{p}_{Y}(D))
\end{equation}
and
\begin{equation}\label{D boundedness}
    \mathrm{cl}\,(\mathbf{p}_Y(D))\textup{ is bounded}\,.
\end{equation}
Therfore, by \eqref{C containment}-\eqref{D containment}, setting $Z:= \mathrm{span}\,\{z,z_2,\dots  z_{1+d^*(D)} \}$, we have
\begin{equation}\notag
    \C \subset Z+\mathrm{cl}\,(\mathbf{p}_{Y}(D))= Z+ \mathrm{cl}\,(\mathbf{p}_{Y}(\mathrm{cl}\,(\mathbf{p}_{z^\perp}(\C))))\,.
\end{equation}
Since $d^*(\C)=d^*(D)+1$ and $\mathrm{cl}\,(\mathbf{p}_{Y}(\mathrm{cl}\,(\mathbf{p}_{z^\perp}(\C))))=\mathrm{cl}\,(\mathbf{p}_{Y}(\C))$, we have shown $(a)$. The boundedness of $\mathrm{cl}\,(\mathbf{p}_{Y}(\C))$, which is $(b)$, follows from the induction hypothesis \eqref{D boundedness}.
\end{proof}\par\medskip

\section{Rigidity in the Choe--Ghomi--Ritor\'e comparison theorem}\label{section large volume}
In this section we prove various properties of the exterior isoperimetric profile $I_\C$ of a convex body, and then prove Theorem \ref{theorem main 1}. We begin with the following proposition, where we consider a constrained version of the exterior isoperimetric profile.

\begin{proposition}\label{prop:ICR}
 Let $\C\subset\R^N$ be a convex body. Given $R>0$ sufficiently large, for all $v\in(0, |B_R\setminus \C|)$ we set
\begin{equation}\label{ICR1}
I_{\C, R}(v):=\min\{P(E; \R^N\setminus \C): E\subset B_R\setminus \C\text{ and }|E|=v\}\,.
\end{equation}
Then $I_{\C,R}$ is a locally Lipschitz function in $(0,|B_R\setminus \C|)$ and for a.e. $v>0$, $I'_{\C,R}(v)$ coincides with the constant mean curvature $H_{\pa^*E_v}$ of $\pa^*E_v\cap (B_R\setminus \C)$, where $E_v$ is any minimizer of the problem \eqref{ICR1}. Moreover for all $v>0$
\begin{equation}\label{ICR2}
\lim_{R\to\infty}I_{\C, R}(v)=I_\C(v)\,.
\end{equation}
Furthermore, assuming, up to a translation, that $0\in\pa \C$, there exist  positive constants  $\Lambda_0$, $d_0$, $r_0$, $c_0$, and  an integer $I_0\in\N$,    % and $N_0\in\mathbb{N}$, 
    all depending on $N$ (but not on  $\C$), such that for all $R \geq R_0= \big(\frac{2}{\omega_N})^{\frac1N}+1$ and $v>0$, any minimizer $E$ for $I_{\C,R\,v^{1/N}}(v)$ satisfies %\begin{enumerate}[(i)]
\begin{equation}\label{lambda min eq}
P(E;\mathbb{R}^N\setminus \C) \leq P(F;\mathbb{R}^N\setminus \C)+\Lambda_0v^{-\frac{1}{N}}\big||F|-|E|\big|\quad \forall F\subset B_{R\,v^{1/N}}\setminus \C\,,
\end{equation}
has at most $I_0$ connected components each of them with diameter less than $v^{\frac1N}d_0$ and for every connected component $E'$ and $x\in E'$ 
$$
|E'\cap B_r(x)|\geq c_0r^N \quad \text{for all $0<r\leq r_0v^{\frac1N}$.}
$$
Finally, for any such minimizer we have 
\beq\label{stimacurv}
|H_{\pa^*E}(x)|\leq\Lambda_0v^{-\frac{1}{N}} \quad
 \text{for all $x\in (B_{R\,v^{1/N}}\setminus \C)\cap \pa^*E$.}
 \eeq
%\textcolor{red}{We could probably add that the connected components are all touching the convex set, even if not needed in our arguments}
    %\item $E$ has one connected component and $\mathrm{diam}\, E \leq d_0 v^{\frac{1}{N}}$.
%\end{enumerate}

\end{proposition}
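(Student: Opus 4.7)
My plan is to handle the assertions in logical order: existence and basic regularity of $I_{\C,R}$; the limit $I_{\C,R}\to I_\C$; the quantitative $\Lambda_0$-minimality \eqref{lambda min eq}; and finally to derive the density, component, diameter, and curvature bounds as corollaries.

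Existence of a minimizer for \eqref{ICR1} follows from the direct method: minimizing sequences are uniformly bounded in perimeter by the relative isoperimetric inequality \eqref{isoperim1}, and the admissibility constraint $E\subset B_R\setminus\C$ is closed under $L^1$-convergence. For local Lipschitz continuity of $I_{\C,R}$ and the identification of $I_{\C,R}'(v)$ with the constant mean curvature of $\pa^*E_v\cap(B_R\setminus\C)$, I would apply Lemma \ref{lm:ef} and its dual contraction to a minimizer $E_v$ to build a bi-Lipschitz family of admissible competitors spanning volumes in a neighborhood of $v$ with linear perimeter cost; the mean-curvature identification is then classical via inner volume-preserving variations and the Euler--Lagrange equation at each differentiability point of $I_{\C,R}$. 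For the limit \eqref{ICR2}, the inequality $I_{\C,R}(v)\ge I_\C(v)$ is trivial; for the reverse, given $\e>0$, take $E$ admissible for $I_\C(v)$ with $P(E;\R^N\setminus\C)\le I_\C(v)+\e$, use a coarea/mean-value argument to find $r\in(R/2,R)$ with $\H^{N-1}(E\cap\pa B_r)$ small, and restore the exact volume of the truncation $E\cap B_r$ by adjoining a small ball in $B_R\setminus(\C\cup\overline{E\cap B_r})$.

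The central step is the $\Lambda_0$-minimality \eqref{lambda min eq}. By the scaling $E\mapsto v^{-1/N}E$, $\C\mapsto v^{-1/N}\C$, the claim reduces to $v=1$ with $\Lambda_0$ depending only on $N$. Let $E$ minimize $I_{\C,R}(1)$ for $R\ge R_0$. The hypothesis $0\in\pa\C$ together with convexity places $\C$ in a closed half-space $H$ with $\pa H$ supporting $\C$ at the origin, so $|B_R\setminus\C|\ge|B_R|/2\ge 1$ for $R\ge R_0$, and a unit-volume half-ball with flat face on $\pa H$ serves as an admissible competitor, yielding $P(E;\R^N\setminus\C)\le C(N)$. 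For any competitor $F\subset B_R\setminus\C$, one may assume $P(F;\R^N\setminus\C)\le 2C(N)$, otherwise \eqref{lambda min eq} holds trivially for $\Lambda_0$ large. Applying Lemma \ref{lm:nonvanishing} to $F$ yields a unit cube on which $F$ has mass bounded below by a dimensional constant; using $\C\subset H$ one locates inside this cube a ball $B_\rho$ of dimensional radius contained in $\R^N\setminus\C$ on which $F$ has intermediate density. Lemma \ref{lm:ef} applied to $F$ in $B_\rho$ then produces a bi-Lipschitz modification $\wtilde F$ with $|\wtilde F|=|E|$ and $P(\wtilde F;\R^N\setminus\C)\le P(F;\R^N\setminus\C)+\Lambda_0\bigl||F|-|E|\bigr|$; minimality of $E$ then gives \eqref{lambda min eq}. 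The hard part, which I expect to be the main obstacle, is simultaneously ensuring that $B_\rho\subset\R^N\setminus\C$ (so the deformation preserves the obstacle) and that $F$ retains intermediate density on $B_\rho$, with constants uniform in $\C$; convexity of $\C$ and the normalization $0\in\pa\C$ are precisely what make this uniform construction possible.

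With \eqref{lambda min eq} in hand, the density estimate $|E'\cap B_r(x)|\ge c_0 r^N$ for $r\le r_0 v^{1/N}$ and every connected component $E'$ follows from Lemma \ref{lm:density} after rescaling. A chaining argument along a path inside $E'$, combined with these density estimates, bounds $\mathrm{diam}(E')\le d_0 v^{1/N}$; and since each component carries volume at least $c_0 r_0^N v$, the total number of components is bounded by $I_0=\lfloor 1/(c_0 r_0^N)\rfloor$. Finally, the curvature bound \eqref{stimacurv} is the standard Euler--Lagrange consequence of being a $\Lambda$-minimizer with $\Lambda=\Lambda_0 v^{-1/N}$, cf.\ \cite{M}.
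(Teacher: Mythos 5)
Your overall structure matches the paper (existence and Lipschitz regularity, the limit \eqref{ICR2}, the $\Lambda$-minimality as the central step, and then the density, diameter, component, and curvature bounds as corollaries of \eqref{lambda min eq} via Lemma \ref{lm:density} and rescaling). The limit \eqref{ICR2} is even simpler than you suggest: $I_{\C,R}\ge I_\C$ is immediate, and the reverse follows because any competitor for $I_\C(v)$ can be approximated in relative perimeter by bounded sets of the same volume, so no delicate coarea truncation is needed.

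The genuine gap is in your treatment of the $\Lambda_0$-minimality. You try to prove \eqref{lambda min eq} directly by deforming the \emph{competitor} $F$ (via Lemmas \ref{lm:nonvanishing} and \ref{lm:ef}) so that its volume matches $|E|=1$, then invoking minimality of $E$. This runs into several problems: (i) if $|F|>1$ you must \emph{decrease} volume, which Lemma \ref{lm:ef} as stated does not do; (ii) the parameter $\sigma_0$ in Lemma \ref{lm:ef} depends on the set being deformed, and there is no uniform-in-$F$ guarantee that the admissible range of $\sigma$ produces a volume increase of size $1-|F|$; (iii) locating a ball $B_\rho\subset B_R\setminus\C$ with intermediate $F$-density uniformly in $\C$, $R$, and $F$ is not a consequence of $\C\subset H$ alone, since the nucleation cube from Lemma \ref{lm:nonvanishing} may sit partially inside $\C$ or near $\partial B_R$. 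The paper avoids all three issues by arguing by \emph{contradiction} on the penalized problem \eqref{penalized problem}: assuming $\Lambda_j\to\infty$ with minimizers $E_{j,\Lambda_j}$ of volume $\neq 1$, one shows $|E_{j,\Lambda_j}|<1$ (so only dilation is needed), then $|E_{j,\Lambda_j}|\to 1$ (so the needed volume increment vanishes), then translates, passes to a Kuratowski limit of the rescaled obstacles $K_j\to K$ and an $L^1$-limit $E$ of the minimizers, and finds the Esposito--Fusco ball for the \emph{limit} set $E$ with $\overline{B_{\overline r}(\overline x)}\cap K=\emptyset$ and $\overline{B_{\overline r}(\overline x)}\subset\mathrm{int}(\widetilde K)$ — the inclusion into $B_{R_j}\setminus v_j^{-1/N}\C_j$ for large $j$ then follows from Kuratowski convergence, giving exactly the uniformity you could not get by a direct argument on $F$. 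In short: the compactness and ``$|E_{j,\Lambda_j}|\to 1$'' steps do the heavy lifting and cannot be replaced by a one-shot deformation of an arbitrary competitor. I'd recommend switching to the penalized-problem contradiction argument, after which your derivation of the remaining conclusions goes through as you outlined.
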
 
\begin{proof} Note that if $R\geq R_0$ for any convex set $\C$ such that $0\in\pa \C$ there exists a set of finite perimeter $E\subset B_R\setminus \C$ with $|E|=1$. Indeed, $\C$ is contained in a half space and $R_0$ is strictly bigger than the radius of a half ball of volume $1$. We divide the proof in three steps.
\par
\medskip
\noindent\textit{Step 1:} The Lipschitz continuity and the representation formula for the derivative of $I_{\C}$ when $\C$ is a bounded convex body are well known facts. The proof of the same properties for $I_{\C,R}$ is similar, see for instance Steps 3 and 4 of the proof of Theorem~1.2 in \cite{FM}. 

The proof of \eqref{ICR2} is immediate.  Indeed we clearly have $I_{\C, R}(v)\geq I_\C(v)$ for $R\geq R_0v^{\frac1N}$. For the opposite inequality it is enough to observe that any competitor for $I_\C(v)$ can be approximated in (relative) perimeter by bounded sets of the same volume.
\par\medskip\noindent
\textit{Step 2:} The argument needed to prove that there exists $\Lambda_0$ such that \eqref{lambda min eq} holds is similar for instance to the one in Step 6 of the proof of \cite[Theorem~3.2]{FFLM} with some modifications due to our particular setting. We reproduce it here for the reader's convenience. To this aim, by rescaling, it is enough to show that there exists $\Lambda_0$ such that for every convex body $\C$, any $v>0$ and any $R\geq R_0$, every minimizer for the penalized problem

% we show that for any sequence of convex sets $\C_j$ with $0\in\pa \C_j$, of volumes $v_j\in(0,+\infty)$ and of  $R_j\geq R_0$, if  $E_j$ is a minimizer for $I_{\C_j,R_j\,v_j^{1/N}}(v_j)$, there exist a non-relabelled subsequence, $\Lambda_0>0$, and $J\in \mathbb{N}$ such that for all $j\geq J$, 
\begin{align}\label{penalized problem}
 \min \{P(E;\mathbb{R}^N\setminus (v^{-\frac{1}{N}}\C)) + \Lambda_0\big||E| - 1\big|:E \subset  B_{R}\setminus (v^{-\frac{1}{N}} \C) \}
\end{align}
with volume $1$. 
\par
\medskip
Let us suppose then for contradiction that there exist a sequence $\Lambda_j\to \infty$, $R_j\geq R_0$, $\C_j$, $v_j\in(0,+\infty)$ and minimizers $E_{j,\Lambda_j}$ for \eqref{penalized problem} (with $\Lambda_0$, $R$, $v$ and $\C$ replaced by $\Lambda_j$, $R_j$, $v_j$ and $\C_j$, respectively) which do not have volume $1$. We observe that necessarily $|E_{j,\Lambda_j}|< 1$, since otherwise we could contradict the minimality by cutting $E_{j,\Lambda_j}$ with a hyperplane not intersecting $v_j^{-\frac{1}{N}} \C_j$. Using as a competitor   $B_{\overline R}\setminus (v_j^{-\frac{1}{N}} \C_j)$, with $\overline R\in(0,R_0]$ chosen so that $|B_{\overline R}\setminus (v_j^{-\frac{1}{N}} \C_j)|=1$ (which is possible since $R_j\geq R_0$), we have 
  \begin{equation}\label{perbound}
 P(E_{j,\Lambda_j};\mathbb{R}^N\setminus (v_j^{-\frac{1}{N}}\C_j)))\leq  P(B_{\overline R};\mathbb{R}^N\setminus (v_j^{-\frac{1}{N}}\C_j)))\leq P(B_{R_0})
 \end{equation}
 and
  \begin{equation}\label{volbound}
  |E_{j,\Lambda_j}|\to1\,.
 \end{equation} 
Thus by Lemma~\ref{lm:nonvanishing}, there exists a constant $c(N)>0$ such that $|(z_j+Q)\cap E_{j,\Lambda_j}|\geq c(N)>0$ for some $z_j\in \mathbb{Z}^N$ and for every $j$. Therefore, up to a subsequence (not relabelled), we may assume that  $\Chi{E_{j,\Lambda_j}-z_j}\to \Chi{E}$ a.e., with $E$ of finite perimeter and $|E|\geq c(N)$. 

\medskip

 We claim that there exist $\overline x\in\partial^{\ast}E$ and $\overline r>0$ such that
  \begin{equation}
  	B_{\overline r}(\overline x)\subset -z_{j}+ \big(B_{R_j}\setminus v_j^{-\frac{1}{N}}\C_j\big), \quad \text{for all $j$ sufficiently large.} \label{empty intersection}
  \end{equation}
To see this note  that, up to a not relabelled subsequence, we may assume that $K_j:=-z_j+v_j^{-\frac{1}{N}} \C_j\to K$ in the sense of Kuratowski, for a suitable closed convex set $K$. Moreover,  by Lemma~\ref{Kuratowski equivalence} we have that $\Chi{K_j}\to\Chi{K}$ almost everywhere. In particular, for a.e. $x\in\R^N$ we have  
$\Chi{E}(x)\Chi{K}(x)=\lim_{j}\Chi{E_{j,\Lambda_j}-z_j}(x)\Chi{K_j}(x)=0$, i.e., $E\subset\R^N\setminus K$. Observe also that, up to a further not relabelled subsequence, we may assume that $-z_j+\overline{B_{R_j}}$ converge in the Kuratowski sense to $\widetilde K$, where $\widetilde K$ can be  $\R^N$ or a half space containing $E$ (if $R_j\to\infty$) or a ball $B_{R_\infty}(z_\infty)$, where $R_\infty=\lim_jR_j$ and $z_\infty=-\lim_jz_j$ (if $\{R_j\}$ is bounded). Note that $z_\infty\in\pa K$.
Therefore, there exist $\overline 
x\in\partial^{\ast}E\setminus K$ and $\overline r>0$ such that $\overline{B_{\overline r}({\overline x})}\cap K=\emptyset$ and $\overline{B_{\overline r}({\overline x})}\subset{\rm int}(\widetilde K)$. This is immediate if $\widetilde K$ is either $\R^N$ or a  half space containing $E$. Instead, if $\widetilde K=B_{R_\infty}(z_\infty)$, this follows since $R_\infty\geq R_0$, hence $|B_{R_\infty}(z_\infty)\setminus K|>1$, while $|E|\leq1$ by \eqref{volbound}. Thus, from the Kuratowski convergence   \eqref{empty intersection}  follows.
  
\medskip 
  
 Arguing as in Step~1 of Theorem~1.1 in  \cite{EF}, given $0<\eps<\eps(N)$, where $\eps(N)$ is as in Lemma~\ref{lm:ef},
 we can find a ball $B_r(x_0)\subset B_{\overline r}(\overline x)$ such that 
 $$
 | E\cap B_{\frac{r}{2}}(x_0)|<\e r^N\,, \quad |  E \cap B_r(x_0)|>\frac{\omega_N}{2^{N+2}}r^N\,. 
 $$ 
Therefore, for $j$ sufficiently large, we  have
 $$
 | E_{j,\Lambda_j}\cap B_{\frac{r}{2}}(x_0+z_j)|<\e r^N\,, \quad | E_{j,\Lambda_j}\cap B_{r}(x_0+z_j)|>\frac{\omega_N}{2^{N+2}}r^N\,,
 $$
 where by \eqref{empty intersection}, $B_{r}(x_0+z_j)\subset B_{R_j}\setminus (v_j^{-\frac{1}{N}} \C_j)$. 
 We now apply Lemma~\ref{lm:ef} to find a positive sequence $\{\sigma_j\}$ and a sequence $\{\widetilde E_j\}$ such that $\widetilde E_j\setminus B_{r}(x_0+z_j)=E_{j,\Lambda_j}\setminus B_{r}(x_0+z_j)$ and satisfying $|\widetilde E_j|<1$ and
\begin{align*}
 P(E_{j,\Lambda_j};\overline{B_{r}(x_0+z_j)})-P(\widetilde E_j;\overline{B_{r}(x_0+z_j)})&\geq -2^NN\sigma_j P(E_{j,\Lambda_j};\overline{B_{r}(x_0+z_j)}),\\
 |\widetilde E_j|-|E_{j,\Lambda_j}|&\geq c(N)\sigma_j r^N.
\end{align*}
From these inequalities, recalling \eqref{perbound} and that $|E_{j,\Lambda_j}|<|E_j|<1$,  we then get
\begin{align*}
& P(\widetilde E_j;\R^N\setminus (v_j^{-\frac{1}{N}} \C_j))+\Lambda_j\big||\widetilde E_j|-1\big|- \big(P(E_{j,\Lambda_j};\R^N\setminus (v_j^{-\frac{1}{N}} \C_j))+\Lambda_j\big||E_{j,\Lambda_j}|-1\big|\big) \\
&\quad \leq 2^NN\sigma_j P(E_{j,\Lambda_j};\overline{B_{r}(x_0+z_j)})+\Lambda_j\big(|E_{j,\Lambda_j}|-|\widetilde E_j|\big) \\
&\quad \leq \sigma_j\big(2^NN^2\omega_N^{\frac1N}-\Lambda_jc(N)r^N\big)<0
\end{align*}
for $j$ large, as $\Lambda_j\to\infty$. This contradicts the minimality of $E_{j,\Lambda_j}$, thus proving \eqref{lambda min eq}.
\par\medskip\noindent
\textit{Step 3:} We finally show the last part of the statement. Assume now $R\geq R_0$ and $v>0$ and let $E$ a minimizer for the problem defining $I_{\C,R\,v^{1/N}}(v)$. From \eqref{lambda min eq} it follows that 
$$
P(v^{-\frac1N}E; \R^N\setminus v^{-\frac1N}\C)\leq P(F; \R^N\setminus v^{-\frac1N}\C)+\Lambda_0|F\Delta v^{-\frac1N}E| \qquad\text{for all $F\subset B_R\setminus v^{-\frac1N}\C$.}
$$
Appealing to Lemma \ref{lm:density}, the conclusions of which do not depend on $R$ and $v^{-1/N}\C$, we obtain $c_0(N)>0$ and $r_0$ depending only on $N,\Lambda_0$ and thus only on $N$,  such that for every connected component $E'$ of the open set $v^{-1/N}E$ and $x\in \partial E'$,
\begin{align}\label{density bound exis theorem}
    |E' \cap B_r(x)| \geq c_0r^N\quad\forall 0<r\leq r_0\,.
\end{align}
Since $|v^{-1/N}E|=1$, this implies the existence of $I_0\in\mathbb{N}$ such that the number of connected components of $v^{-1/N}E$ is at most $I_0$. Moreover, the density estimate \eqref{density bound exis theorem} also implies by a standard argument  the existence of $d_0$ such that $\mathrm{diam}\, (E')\leq d_0$ for any  component $E'$. Finally, the estimate \eqref{stimacurv} follows from the $\Lambda$-minimality property \eqref{lambda min eq} by a standard first
variation argument.
\end{proof}

We now prove a semicontinuity property of the exterior isoperimetric profile with respect to the Kuratowski convergence of the set $\C$.

 \begin{lemma}\label{etto}
Let $\{\C_n\}$ be a sequence of convex bodies converging in the Kuratowski sense to a convex body $\C$. Then for all $v>0$ we have
$$
\limsup_n I_{\C_n}(v)\leq I_\C(v)\,.
$$
\end{lemma}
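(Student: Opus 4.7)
The plan is to push every near-optimal bounded competitor $E$ for $I_\C(v)$ forward by a bi-Lipschitz deformation $\psi_n$ of $\R^N$, equal to the identity outside a large ball and sending $\partial\C$ to $\partial\C_n$ in a neighborhood of $E$, so as to obtain an admissible competitor for $I_{\C_n}(v)$ with nearly the same relative perimeter.

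Given $\eta>0$, I would first pick $E\subset\R^N\setminus\C$ with $|E|=v$ and $P(E;\R^N\setminus\C)<I_\C(v)+\eta$, then fix $x_0\in\mathrm{int}(\C)$ and $r>0$ with $\overline{B_{2r}(x_0)}\subset\C$ (so that $\overline{B_r(x_0)}\subset\C_n$ for $n$ large). By a standard truncation (taking $E\cap B_R(x_0)$ for $R$ large and restoring the volume with a small far-away ball, which exists since $\C\ne\R^N$), I may assume $E\subset B_R(x_0)\setminus\C$. For each direction $\theta\in S^{N-1}$ set $\rho_\C(\theta):=\sup\{t\geq 0 : x_0+t\theta\in\C\}\in(0,\infty]$, and analogously $\rho_n(\theta)$ for $\C_n$. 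Since for closed convex sets Kuratowski convergence is equivalent to local Hausdorff convergence on compacts, $\rho_n\to\rho_\C$ uniformly on the compact set $K_R:=\{\theta : \rho_\C(\theta)\leq R+1\}$.

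The core construction is a sequence of maps $\psi_n:\R^N\to\R^N$, equal to the identity on $\overline{B_r(x_0)}$ and outside $B_{R+1}(x_0)$, defined in polar coordinates around $x_0$ by $\psi_n(x_0+t\theta)=x_0+h_n(t,\theta)\,\theta$, with $h_n(\cdot,\theta)$ continuous and piecewise linear on $[r,R+1]$, sending $\rho_\C(\theta)$ to $\rho_n(\theta)$ when $\theta\in K_R$ and equal to the identity otherwise. Choosing $R$ generically so that $\{\rho_\C=R+1\}$ is $\H^{N-1}$-negligible, the uniform convergence $\rho_n\to\rho_\C$ on $K_R$ produces continuous bi-Lipschitz maps with $\mathrm{Lip}(\psi_n),\mathrm{Lip}(\psi_n^{-1})\to 1$ and $\psi_n\to\mathrm{id}$ uniformly on $\R^N$; the monotonicity of $h_n(\cdot,\theta)$ in $t$ guarantees $\psi_n(\R^N\setminus\C)\subset\R^N\setminus\C_n$. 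Setting $E_n:=\psi_n(E)\subset B_{R+1}\setminus\C_n$, the standard bi-Lipschitz change-of-variables formulas give $|E_n|\to v$ and $P(E_n;\R^N\setminus\C_n)\to P(E;\R^N\setminus\C)$; a small volume adjustment by a far-away ball produces admissible $\tilde E_n$ for $I_{\C_n}(v)$ with $\limsup_n P(\tilde E_n;\R^N\setminus\C_n)\leq I_\C(v)+\eta$. Letting $\eta\to 0$ concludes.

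The main obstacle is the construction of $\psi_n$: continuity and the uniform bi-Lipschitz bounds must be verified across the interface $\{\rho_\C=R+1\}$ and in directions where $\rho_\C=\infty$. These cause no essential trouble since $E$ avoids the corresponding cones inside $B_R$ and the uniform convergence of $\rho_n$ on $K_R$ supplies the required quantitative control; once $\psi_n$ is in place, the convergence of volumes and relative perimeters is routine.
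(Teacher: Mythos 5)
Your proposal takes a genuinely different route from the paper. The paper does not construct any deformation of $\R^N$: it observes that Kuratowski convergence gives $\C\cap B_R\subset(1+\e)\C_n\cap B_R$ for $n$ large, takes a near-minimizer $E\subset B_R\setminus\C$ for $I_{\C,R}((1+\e)^N v)$, cuts it by setting $E_n:=E\setminus(1+\e)\C_n$ (which can only \emph{decrease} relative perimeter, since in the open set $\R^N\setminus(1+\e)\C_n$ the reduced boundary of $E\cap U$ is contained in $\partial^*E\setminus\C$), restores the $O(\e)$ volume loss with a far-away small ball, rescales via $I_{(1+\e)\C_n}((1+\e)^N v)=(1+\e)^{N-1}I_{\C_n}(v)$, and sends $\e\to0$ and then $R\to\infty$. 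This is a ``dilate-and-cut'' argument with no bi-Lipschitz map.

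Your construction has a genuine gap at its central step. The assertion that $\mathrm{Lip}(\psi_n),\mathrm{Lip}(\psi_n^{-1})\to 1$ does \emph{not} follow from the uniform convergence $\rho_n\to\rho_\C$ on $K_R$. The tangential part of the Lipschitz constant of a radial map of the form $x_0+t\theta\mapsto x_0+h_n(t,\theta)\theta$ is governed by $\nabla_\theta h_n$, hence by $\nabla_\theta(\rho_n-\rho_\C)$, and $L^\infty$ convergence of $\rho_n$ to $\rho_\C$ gives no $L^\infty$ control on the gradients. A concrete obstruction: take $\C$ a cube and $\C_n$ the same cube rotated by $\alpha_n\to0$ about $x_0$; then $\rho_n\to\rho_\C$ uniformly, but $\rho_\C'$ has jumps at the corner directions, and $\|\nabla_\theta(\rho_n-\rho_\C)\|_{L^\infty}$ stays bounded away from $0$. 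Your $\psi_n$ then has $\mathrm{Lip}(\psi_n)$ bounded away from $1$, so the crude bound $P(\psi_n(E);\cdot)\le \mathrm{Lip}(\psi_n)^{N-1}P(E;\cdot)$ only yields $\limsup_n I_{\C_n}(v)\le L^{N-1}I_\C(v)$ for some $L>1$, which is not the claimed inequality. The convergence $P(\psi_n(E);\R^N\setminus\C_n)\to P(E;\R^N\setminus\C)$ would need a finer area-formula argument (e.g.\ $\H^{N-1}$-a.e.\ convergence of the tangential Jacobian on $\partial^*E$ with dominated bounds), which you have not supplied and which is far from ``routine.'' A secondary issue is the interface behavior: for $\theta$ with $\rho_\C(\theta)$ near $R+1$, uniform convergence on $K_R$ does not preclude $\rho_n(\theta)>R+1$, which is incompatible with a monotone $h_n(\cdot,\theta)$ that is the identity on $[R+1,\infty)$; choosing $R$ so that $\{\rho_\C=R+1\}$ is $\H^{N-1}$-null does not fix this, since the problem occurs on a set of directions of positive measure. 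Both issues would need substantial additional work; the paper's argument sidesteps them entirely.
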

\begin{proof}
Without loss of generality we may assume that $0\in {\rm int}(\C)$. Let $I_{\C,R}$ be the constrained exterior isoperimetric profile defined in \eqref{ICR1}. For any $\e>0$ sufficiently small let $E\subset B_R\setminus \C$ be a minimizer of  the problem defining $I_{\C, R}((1+\e)^Nv)$.  By the local Hausdorff convergence we have that 
$$
\C\cap B_R\subset (1+\e)\C_n\cap B_R\,.
$$
for $n$ large enough.
Set $E_n:=E\setminus (1+\e)\C_n$ and observe that 
\begin{align}\notag
|E_n|\to |E|-\big|E \cap (1+\e)\C\big| = (1+\e)^Nv-\big|E \cap (1+\e)\C\big|\,.
\end{align}
We now let $F_n:=E_n\cup B_{r_n}(x_n)$, with $B_{r_n}(x_n)$ a small ball in $\R^N\setminus (\C_n\cup B_R)$ such that $|F_n|=(1+\e)^Nv$. Then $|B_{r_n}(x_n)| = \mathrm{O}(\e)$ and thus
\[
\begin{split}
\limsup_n I_{(1+\e)\C_n}((1+\e)^Nv)&\leq \limsup_n P(F_n; \R^N\setminus (1+\e)\C_n ) \\
&= \limsup_n P(E_n; \R^N\setminus (1+\e)\C_n ) +\mathrm{O}(\e^{(N-1)/N})\\
&\leq  P(E; \R^N\setminus \C )+\mathrm{O}(\e^{(N-1)/N})= I_{\C,R}((1+\e)^Nv)+\mathrm{O}(\e^{(N-1)/N})\,.
\end{split}
\]
We conclude the proof by observing that  $I_{(1+\e)\C_n}((1+\e)^Nv)= (1+\e)^{N-1}I_{\C_n}(v)$ and using Proposition~\ref{prop:ICR} to let $\e\to 0$ and then $R\to \infty$.
\end{proof}

%We want to study 
%$$
%\lim_{v\to\infty} \frac{I_\C(v)}{v^{\frac{N-1}N}}=:L_\C\,,
%$$

%In the following if $E\subset \R^N$ and $r>0$ we denote
%$$
%(E)_r:=\{x\in \R^N:\, \dist(x, E)\leq r\}\,.
%$$

%In the following if $\Pi$ is a $k$-dimensional hyperplane and $x\in\Pi$ we denote by $B_r^\Pi(x)$ the $k$-dimensional ball $B_r(x)\cap\Pi$.

\begin{lemma}\label{lm:main}
Let $\C\subset\R^N$ be a convex body and let $H$ be any half space. If $\dim \C_\infty\geq N-1$, then $I_\C=I_H$. 
%=N\big(\frac{\omega_N}{2}\big)^{\frac1N}$, where $H$ is any half space,  that is the half perimeter of a ball of volume 2. 
%If $\dim \C_\infty<N-1$ and there exists $r>0$ such that $\C\subset (\C_\infty)_r$, then $L_\C=I_{\R^N}(1)=N\omega_N^{\frac1N}$.
%If $N=3$,   $\dim \C_\infty=1$ and $\C\setminus (\C_\infty)_r\neq\emptyset$ for every $r>0$, then  $L_\C=3\big(\frac{2\pi}{3}\big)^{\frac13}$. 
\end{lemma}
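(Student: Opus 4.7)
The plan is to reduce the claim to proving $I_\C(v) \leq I_H(v)$ for every $v > 0$, since the reverse inequality is given by Theorem \ref{th:isoperim}. We accomplish this via Lemma \ref{etto}: we construct a sequence of translates $\C_n := \C - y_n$ of $\C$ that Kuratowski-converges to a convex body $\C^*$ which is either a half-space or a slab, so that $I_{\C^*}(v) = I_H(v)$ by the direct half-ball construction placed against a flat facet of $\C^*$. Since $I_{\C_n} \equiv I_\C$ by translation invariance, Lemma \ref{etto} then gives
\begin{equation*}
I_\C(v) = \limsup_n I_{\C_n}(v) \leq I_{\C^*}(v) = I_H(v)\,,
\end{equation*}
and combined with Theorem \ref{th:isoperim} we conclude $I_\C \equiv I_H$.

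To construct the sequence $y_n$: since $\C \neq \R^N$, the polar cone $\C_\infty^\circ$ is nontrivial. Choose a unit vector $\nu$ on the relative boundary of $\C_\infty^\circ$ such that the exposed face $F_\nu := \C_\infty \cap \nu^\perp$ contains a nonzero vector, and pick $z \in \mathrm{relint}(F_\nu)$. After a translation we may assume $0 \in \C$ and $\sup_{x \in \C}\langle x, \nu\rangle = 0$, so that $\C \subset H := \{x : \langle x, \nu\rangle \leq 0\}$. Set $y_n := n z$; since $z \in \C_\infty$ we have $y_n \in \C$, and since $\langle z,\nu\rangle = 0$, each $\C_n = \C - y_n$ lies in $H$.

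To identify $\C^* = \lim_n \C_n$ we track which support constraints of $\C$ survive the translation by $-nz$. Every $\mu \in \C_\infty^\circ$ satisfies $\langle z,\mu\rangle \leq 0$, with equality only when $\mu$ lies in the normal cone $N_z(\C_\infty)$; since $z \in \mathrm{relint}(F_\nu)$, this normal cone is $\{t\nu : t\geq 0\}$ when $\dim\C_\infty = N$ and $\{t\nu : t\in\R\}$ when $\dim\C_\infty = N-1$. Therefore only the constraints with outer normal along $\pm\nu$ persist in the limit, yielding either $\C^* = H$ (when $\dim\C_\infty = N$) or the slab $\C^* = \{x : a \leq \langle x,\nu\rangle \leq 0\}$ for some $a \in [-\infty,0)$ (when $\dim\C_\infty = N-1$). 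In both cases $\C^*$ is a convex body with an unbounded flat facet in $\partial H$, against which a half-ball of volume $v$ is an admissible competitor giving $I_{\C^*}(v) \leq I_H(v)$, hence equality by Theorem \ref{th:isoperim}.

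The main technical hurdle is the Kuratowski liminf inclusion $\C^* \subset \liminf_n \C_n$; the limsup inclusion follows immediately from the support-constraint analysis above. For the liminf, one uses the tangent cone identity $T_z(\C_\infty) = H$ (valid since $z \in \mathrm{relint}(F_\nu)$) together with the fatness property $B_r(x_0) + \C_\infty \subset \C$ for some ball $B_r(x_0) \subset \mathrm{int}(\C)$; this implies that for every $p$ in the relative interior of $\C^*$, $p + nz$ lies in $\C$ for all sufficiently large $n$, and a density argument extends the inclusion to all of $\C^*$.
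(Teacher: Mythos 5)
The proposal contains a genuine gap at the step ``After a translation we may assume $0 \in \C$ and $\sup_{x \in \C}\langle x, \nu\rangle = 0$.'' For the $\nu$ you have chosen---on the relative boundary of $\C_\infty^\circ$, so that $F_\nu = \C_\infty \cap \nu^\perp$ is nontrivial---the quantity $\sup_{x\in\C}\langle x, \nu\rangle$ can be $+\infty$, so $\C$ need not lie in a half-space with outer normal $\nu$ at all. A concrete counterexample: in $\R^2$, take $\C = \{(x_1, x_2) : x_2 \geq x_1^2\}$, for which $\C_\infty = \{(0,t): t\geq 0\}$ so $\dim \C_\infty = 1 = N-1$, $\C_\infty^\circ = \{\nu_2 \leq 0\}$, and the relative boundary is $\{\nu_2 = 0\}$; for $\nu = (\pm 1, 0)$ one has $F_\nu = \C_\infty$ nontrivial but $\sup_{x \in \C}\langle x, \nu\rangle = +\infty$. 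In that case the translates $\C_n = \C - n z = \{x_2 \geq x_1^2 - n\}$ (with $z = (0,1)$) converge in the Kuratowski sense to all of $\R^2$, which is not a convex body, so Lemma \ref{etto} cannot be invoked and your identification of $\C^*$ as a half-space or slab collapses. Note also that in this situation there is no admissible repair within your framework: every $\nu$ on the relative boundary of $\C_\infty^\circ$ for which $F_\nu$ is nontrivial has $\sup_{\C}\langle\cdot,\nu\rangle = +\infty$, while $\nu$ in the relative interior of $\C_\infty^\circ$ gives $F_\nu = \{0\}$.

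The structural reason the argument fails is that translating by $n z$ with $z\in\C_\infty$ misses the ``drift'' of $\partial\C$ transversal to the recession directions. The paper's proof instead translates by genuine boundary points: after normalizing $\C_\infty \subset \{x_N = 0\}$ with $e_1 \in \mathrm{relint}\,\C_\infty$, it introduces the concave function $\varphi_n$ parameterizing $\partial\C$ over a ball $B'_{2nr}(ne_1)$ and translates by $(n e_1, \varphi_n(n e_1))\in\partial\C$. The crucial ingredient, which your argument lacks, is the estimate $\frac{1}{n}\max_{\overline{B'_{nr}(ne_1)}}\varphi_n\to 0$ and the resulting gradient bound $\|\nabla\varphi_n\|_{L^\infty(B'_{nr/2}(ne_1))}\to 0$ via the oscillation estimate for concave functions; this is what makes the limit configuration flat (a half-space or slab) and allows the half-ball construction. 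In the parabola example above, the correct translation is $(n,\sqrt{n})$ in the coordinates where $\C=\{x_1\geq x_2^2\}$; its Kuratowski limit is a half-space, whereas translating by $n z$ alone loses this and gives $\R^2$. So if you want to rescue the Lemma \ref{etto}-based strategy, you must translate by boundary points and supply a quantitative flattening estimate; the tangent-cone/density argument sketched in your final paragraph does not substitute for it.
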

\begin{proof}
In what follows we denote by $B'_r(x)$ the intersection $B_r(x)\cap\{x_N=0\}$. We divide the proof in two steps.
\par
\medskip
\noindent\textit{Step 1:} We start with the case $d:=\dim \C_\infty=N-1$.  Without loss of generality we may assume that $\C_\infty\subset\{x_N=0\}$ and that  $e_1$ belongs to the relative interior of $\C_\infty$. Let  $\C'=\C\cap\{x_N=0\}$ and note that $\C_\infty\subset \C'$.  Therefore there exists $r>0$ such that $B'_{2r}(e_1)\subset \C_\infty$. Consider now the sequence of balls $B'_{nr}(ne_1)$ and for any $n$ denote by $\varphi_n: B'_{2nr}(ne_1)\to[0,\infty)$ the concave function whose graph coincides with $\pa \C\cap\big(B'_{2nr}(ne_1)\times[0,\infty)\big)$. We claim that 
\beq\label{lmain1}
\frac{1}{n}\max\big\{\varphi_n(x'):\,x'\in \overline{B'_{nr}(ne_1)}\big\}\to0.
\eeq
In fact, if $x'_n$ is the maximum point of $\varphi_n$ in $\overline{B'_{nr}(ne_1)}$ then, up to a subsequence, 
$$
\frac{(x'_n,\varphi_n(x'_n))}{\sqrt{|x'_n|^2+\varphi_n(x'_n)^2}}\to (y',0)
$$
for some $(y',0)\in \C_\infty$. Thus $\varphi_n(x'_n)/|x'_n|\to0$, hence \eqref{lmain1} follows. Recall that by a well known property of concave functions, see for instance \cite[Ch. I, Eq. (2.15)]{ET},  
$$
\|\nabla\varphi_n\|_{L^\infty(B'_{n\frac{r}{2}}(ne_1))}
\leq \frac{2}{nr}\operatorname*{osc}_{B'_{nr}(ne_1)}\varphi_n\,.
$$
Thus, thanks to \eqref{lmain1} we have that 
\beq\label{lmain1bis}
\|\nabla\varphi_n\|_{L^\infty(B'_{n\frac{r}{2}}(ne_1))}\to0\,.
\eeq
Therefore it is easily checked that the sets $\C-(ne_1,\varphi_n(ne_1))$ converge in the Kuratowski sense, up to a subsequence, to a convex set $K\subset\{x_N\leq0\}$ such that $\pa K\subset\{x_N=0\}$. Let us now fix $v>0$ and denote by $r_v$ the radius of a half ball of volume $v$. For every $n$ let $r_n$ be the radius of the ball centered at $x_n=(ne_1,\varphi_n(ne_1))$ and such that $|B_{r_n}(x_n)\setminus \C|=v$. Then, recalling that by \eqref{lmain1bis} the boundary of $\C-x_n$ is flattening out,  it follows that $r_n\to r_v$ and that $P(B_{r_n}(x_n);\R^N\setminus \C)\to I_H(v)$. Hence, $I_\C(v)\leq I_H(v)$, while the opposite inequality follows from Theorem \ref{th:isoperim}.

\medskip

\noindent\textit{Step 2:} We assume now that $d=N$. Without loss of generality we may assume, up to a possible rotation and dilation that $\C_\infty$ has a unique tangent plane $\{x_N=0\}$ at $e_1$ and that $\C_\infty$ stays above $\{x_N=0\}$. Let us now fix $\kappa>0$ so large that $B'_{\frac{1}{\kappa}}(e_1)$ is contained in the projection of $\C_\infty$ onto $\{x_N=0\}$.  Consider the balls $B'_{\frac{n}{\kappa}}(ne_1)$ and the functions $\varphi_n:\overline{B'_{\frac{n}{\kappa}}(ne_1)}\to [-\infty,+\infty)$ defined as 
$$
\varphi_n(x')=\inf\{t\in\R:\,\,(x',t)\in\pa \C\}.
$$
Observe that for any $x'\in B'_{\frac{n}{\kappa}}(ne_1)$ the above infimum is finite. Indeed, if for some point $x'$ we had $\varphi_n(x')=-\infty$, then the half line $\{te_N:\,t\leq0\}$ would be contained in $\C_\infty$, which is not possible. A similar argument shows also that if $x'_n$ is the minimum point of $\varphi_n$ on $\overline{B'_{\frac{n}{\kappa}}(ne_1)}$ then 
$
\liminf_{n}\frac{\varphi_n(x'_n)}{n}>-\infty.
$
We claim that 
\beq\label{lmain2}
 \liminf_{n}\frac{\varphi_n(x'_n)}{n}\geq0.
 \eeq 
  Indeed, 
 assuming without loss of generality that the sequence $\big(\frac{x'_n}{n},\frac{\varphi_n(x'_n)}{n}\big)$ converges to some point $y=(y',y_N)\in\R^n$ then necessarily $y\in\pa \C_\infty$ and thus $y_N\geq0$. Since $\varphi_n$ is a convex function, as before we
 have
\beq\label{lmain3}
 \|\nabla\varphi_n\|_{L^\infty(B'_{\frac{n}{2\kappa}}(ne_1))}
\leq \frac{2\kappa}{n}\operatorname*{osc}_{B'_{\frac{n}{\kappa}}(ne_1)}\varphi_n=\frac{2\kappa}{n}(\varphi_n(y'_n)-\varphi_n(x'_n)),
 \eeq
 where $y'_n$ is the maximum point of $\varphi_n$ on $\overline{B'_{\frac{n}{\kappa}}(ne_1)}$. Observe that  the point $\big(\frac{y'_n}{n},\frac{\varphi_n(y'_n)}{n}\big)$ lies below $\pa \C_\infty$ and since $\frac{y'_n}{n}\in B'_{\frac{1}{\kappa}}(e_1)$ we have
 $$
 \frac{\varphi_n(y'_n)}{n}\leq \sup\big\{t:\, (x',t)\in \pa \C_\infty\,\, \text{with $x'\in B'_{\frac{1}{\kappa}}(e_1)$}\big\}\leq o\Big(\frac{1}{\kappa}\Big),
 $$
 where the last inequality follows from the fact $\{x_N=0\}$ is tangent to $\C_\infty$ at $e_1$.
 Therefore, recalling \eqref{lmain2} and \eqref{lmain3} we get that 
 $$
 \|\nabla\varphi_n\|_{L^\infty(B'_{\frac{n}{2\kappa}}(ne_1))}
\leq \kappa o\Big(\frac{1}{\kappa}\Big).
 $$
 Fix $R>0$. From the previous estimate we have  that for all $\kappa>0$
 $$
 \limsup_n \|\nabla\varphi_n\|_{L^\infty(B'_{R}(ne_1))}\leq\limsup_{n} \|\nabla\varphi_n\|_{L^\infty(B'_{\frac{n}{2\kappa}}(ne_1))}\leq\kappa o\Big(\frac{1}{\kappa}\Big),
 $$
 hence for all $R>0$
 $$ \lim_n \|\nabla\varphi_n\|_{L^\infty(B'_{R}(ne_1))}=0.$$
 Then conclusion then follows exactly as in the final part of Step 1.
\end{proof}

We are finally ready to prove Theorem \ref{theorem main 1}.

\begin{proof}[Proof of Theorem \ref{theorem main 1}] It suffices to prove that if $d^*(\C)\geq N-1$, then $I_\C\equiv I_H$, and that if $d^*(\C)\le N-2$, then $I_\C(v)\,v^{(1-N)/N}\to N\omega_N^{1/N}$ as $v\to\infty$.

\medskip

If $d^*(\C)\geq N-1$, by definition there exist a sequence $\{x_n\}\subset \C$, $\lambda_n\to0^+$, such that $\lambda_n(\C-x_n)\to K_{\rm max}$ 
  in the Kuratowski sense, with $\dim K_{\rm max}=d^*(\C)\geq N-1$. Up to a subsequence, we may also assume that $\C-x_n\to K$ for some convex body $K$. In particular, since for any $\lambda>0$ and $n$ sufficiently large 
  $$
  \lambda_n(\C-x_n)\subset \lambda(\C-x_n)\to \lambda K, 
  $$ 
  we have that $K_{\rm max}\subset \lambda K$ for every $\lambda>0$, and thus $K_{\rm max}\subset K_\infty$. Therefore $\dim K_\infty\geq N-1$.
 In turn, by Lemma~\ref{lm:main} we have that $I_K=I_H$. Hence, the lower semicontinuity property stated in Lemma~\ref{etto} for all $v>0$ we have that $I_\C(v)\leq I_K(v)=I_H(v)$, while the opposite inequality follows by the isoperimetric inequality \eqref{isoperim1}.

\medskip

Assume now $d^*(\C)\le N-2$. Without loss of generality we may assume $0\in\pa \C$.
Given any diverging sequence $v_n\to+\infty$,  it will be enough to show that 
\begin{equation}\label{claim3}
\liminf_{n}\frac{I_\C(v_n)}{v_n^{\frac{N-1}N}}\geq N\omega_N^{\frac1N}\,.
\end{equation}
Without loss of generality we may assume that the above $\liminf$ is a limit. By Proposition~\ref{prop:ICR}, we may find $\Lambda_0>0$ and $R_n\to+\infty$  such that 
\beq\label{claim3.5}
\lim_{n}\frac{I_{\C, R_n}(v_n)}{v_n^{\frac{N-1}N}}= \lim_{n}\frac{I_\C(v_n)}{v_n^{\frac{N-1}N}}\,,
\eeq
and
\beq\label{claim4}
\begin{split}
\frac{I_{\C, R_n}(v_n)}{v_n^{\frac{N-1}N}}&=I_{v_n^{-\frac1N}\C, v_n^{-\frac1N}R_n}(1)\\
&=\min\Big\{P(E; \R^N\setminus v_n^{-\frac1N}\C)+\Lambda_0\big||E|-1\big|:\, E\subset v_n^{-\frac1N}\big( B_{R_n}\setminus \C\big)  \Big\}\,.
\end{split}
\eeq
Let $E_n$ be a minimizer of \eqref{claim4}. Again by Proposition~\ref{prop:ICR}, passing possibly to a not relabelled subsequence,  there exist $\kappa\in \N$ and $d_0>0$ such that  for $n$ sufficiently large each $E_n$ has  $\kappa$ connected components $E_{n,i}$ and each of them has diameter less than $d_0$. 
We claim that for all $i=1,\dots,\kappa$
\beq\label{claim5}
P(E_{n,i}; \pa (v_n^{-\frac1N}\C) )\to 0.
\eeq
To this aim fix $i$ and assume $P(E_{n,i}; \pa (v_n^{-\frac1N}\C) )\nrightarrow 0$. Then we may find  $x_{n,i}\in v_n^{-\frac{1}{N}}\C$ such that $E_{n,i}\subset B_{d_0}(x_{n,i})$.  Up to a subsequence, we have $v_n^{-\frac{1}{N}}\C-x_{n,i}\to K_i$ with $\dim K_i<N-1$.
Hence, for any  $\e>0$ and $n$ large enough,  setting $(K_i)_\e:=\{x:\,{\rm dist}(x,K_i)\leq\e\}$, we have  
\[
\begin{split}
& P(E_{n,i}-x_{n,i}; \pa (v_n^{-\frac1N}\C-x_{n,i}) )\leq  \H^{N-1}(B_{d_0}\cap  \pa (v_n^{-\frac1N}\C-x_{n,i}) )\\
&\leq  P(B_{d_0}\cap   (v_n^{-\frac1N}\C-x_{n,i}))\leq  P(B_{d_0}\cap   (K_i)_\e)\,,
\end{split}
\]
where the last inequality follows from the containment $B_{d_0}\cap   (v_n^{-\frac1N}\C-x_{n,i})\subset B_{d_0}\cap   (K_i)_\e$ (holding for $n$ large by Kuratowski convergence) and the fact that both sets are convex. 
Since $P(B_{d_0}\cap   (K_i)_\e)\to 0$ as $\e\to0$,  \eqref{claim5} follows.

\medskip

In turn, by the isoperimetric inequality,
$$
\frac{I_{\C, R_n}(v_n)}{v_n^{\frac{N-1}N}}=P(E_n; \R^N\setminus v_n^{-\frac1N}\C)=P(E_n)-P(E_n; \pa (v_n^{-\frac1N}\C))\geq N\omega_N^{\frac1N}-P(E_n; \pa (v_n^{-\frac1N}\C))
$$
and \eqref{claim3} then follows, recalling  \eqref{claim3.5} and \eqref{claim5}. This concludes the proof of the theorem.
\end{proof}
\begin{corollary}\label{nonex}
Let $\C$ be a  strictly convex body with $d^*(\C)\geq N-1$. Then, for every $v>0$ the relative isoperimetric problem \eqref{ICV} does not admit a solution. 
\end{corollary}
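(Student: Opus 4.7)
The plan is to combine Theorem \ref{theorem main 1} with the equivalence between \eqref{rigidity 1} and \eqref{rigidity 2} established in \cite{CGR,FM}, and then rule out the existence of facets on $\partial\C$ via strict convexity.

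More precisely, I would argue by contradiction. Suppose there exists some $v_0>0$ such that $I_\C(v_0)$ admits a minimizer. Since we are assuming $d^*(\C)\ge N-1$, Theorem \ref{theorem main 1} gives $I_\C\equiv I_H$ on $(0,\infty)$; in particular $I_\C(v_0)=I_H(v_0)$. Thus the hypothesis \eqref{rigidity 1} holds at the value $v_0$, and by the equivalence of \eqref{rigidity 1} and \eqref{rigidity 2} (proved in \cite{CGR} for $C^2$ convex bodies and in \cite{FM} for arbitrary convex bodies), we conclude that $\partial\C$ possesses a facet supporting a half-ball of volume $v_0$ contained in $\R^N\setminus\C$. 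In particular, $\partial\C$ contains an $(N-1)$-dimensional flat region.

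This, however, contradicts the strict convexity of $\C$: by definition a strictly convex body has the property that $\partial\C$ contains no line segments (equivalently, every supporting hyperplane of $\C$ meets $\partial\C$ in a single point), and so $\partial\C$ cannot contain any facet. Therefore no $v_0>0$ as above can exist, which is exactly the claim.

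The argument is essentially a one-line deduction once one has Theorem \ref{theorem main 1} and the $\eqref{rigidity 1}\Leftrightarrow\eqref{rigidity 2}$ equivalence at hand; the only possible subtlety is making sure the notion of ``facet'' used in \cite{CGR,FM} is the standard one (an $(N-1)$-dimensional flat portion of $\partial\C$ of positive $\H^{N-1}$-measure that supports a half-ball in the complement), which is manifestly incompatible with strict convexity. There is no hard technical step to carry out beyond citing these two results.
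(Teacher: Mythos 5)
Your argument is correct and matches the paper's proof in essence: the paper deduces from Theorem \ref{theorem main 1} that $I_\C(v)=I_H(v)$ and then applies the equality characterization in Theorem \ref{th:isoperim} (which is precisely what underlies the equivalence of \eqref{rigidity 1} and \eqref{rigidity 2}) to conclude that a minimizer would have to be a half-ball supported on a facet, impossible under strict convexity. The only cosmetic difference is that the paper invokes Theorem \ref{th:isoperim} directly rather than the reformulated equivalence \eqref{rigidity 1}$\Leftrightarrow$\eqref{rigidity 2}; this is the same mathematical content, as already noted in the Remark following Theorem \ref{theorem main 1}.
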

\begin{proof}
By Theorem~\ref{theorem main 1}, we have $I_\C(v)=I_H(v)$. On the other hand, by the characterization of the equality case  in Theorem~\ref{th:isoperim}, taking into account that $\C$ is strictly convex, we have $P(E; \R^N\setminus \C)>I_H(v)$ for all sets $E\subset \R^N\setminus \C$, with $|E|=v$.
\end{proof}

\begin{remark}[Minimizers and generalized minimizers]\label{rm:existence}
Note that if $\C$ is a convex cylinder, then there exists a minimizer  for the problem defining  $I_\C(v)$ for  all $v>0$. Indeed, by Proposition~\ref{prop:ICR} any minimizer $E_R$ of $I_{\C,R v^{1/N}}(v)$ for $R\geq R_0$ has at most $I_0$ connected components of diameter at most $d_0 v^{1/N}$. Letting $R\to \infty$ and availing ourselves of the translation invariance outside a cylinder and the convergence of $I_{\C,R\,v^{1/N}}$ to $I_\C(v)$, we see that up to a subsequence and translations, the $E_R$'s converge to a minimizer $E$ of $I_\C(v)$ as $R\to \infty$. The veracity of \eqref{lambda min eq} among any $F\subset\!\subset\mathbb{R}^N\setminus \C$ follows from noticing that  for any $B_R(0)$ containing $E_R\cup F$ \eqref{lambda min eq} is satisfied by $E_R$ and then passing to the limit as $R\to\infty$.

Finally, we remark that  for general convex sets $\C$ one could prove the existence of {\em generalized minimizers} for the problem $I_{\C}(v)$ in the following sense: Let $E_R$ be as before, with $R\to +\infty$, and pick $z_R\in E_R$. Then, up to a subsequence, we may assume that $E_R-z_R\to E_\infty$  in $L^1$ and $\C-z_R\to K_\infty$ in the Kuratowski sense, with $K_\infty$ being a (possibly lower dimensional) convex cylinder, see \cite[Lemma 3.1]{LRV}. 
It could be possible to show that $E_\infty$ is a minimizer for the ``asymptotic problem''  $I_{K_\infty}(v)$. The set $E_\infty$ can be regarded as a generalized minimizer for $I_\C(v)$  capturing the behaviour of  (suitable) minimizing sequences.  Note that $d^*(K_\infty)\leq d^*(\C)$.
\end{remark}

\section{The order of isoperimetric residue for unbounded convex bodies with asymptotic codimension larger than $2$}\label{section residues} The main goal of this section is providing a proof of Theorem \ref{theorem order of residue}. In fact, we shall prove various other results that seem potentially useful for future investigations too. Specifically, as already explained in the introduction and as indicated by \cite{MN} for the case $d^*(\C)=0$, the question of understanding the behavior of $\mathcal{R}_\C(v)$ as $v\to\infty$ is closely related to the description of minimizers of $I_\C(v)$ as $v\to\infty$. Since such minimizers may fail to exist, here we explore the idea of using minimizers of the constrained isoperimetric problems $I_{\C,R\,v^{1/N}}$ already used in the previous section. In particular, in the following lemmas, we obtain basic information on the shape of such minimizers; see, in particular, Theorem \ref{existence and uniform minimality} and Proposition \ref{cylinder hausdorff lemma take 2}.

\medskip

Before moving forward with the above program we introduce some additional notation and terminology. In the following we denote by $c(N,\C)$ a positive constant depending only on $\C$ and $N$ whose value may change from line to line or even within the same line. Moreover, given a set of finite perimeter  $E$, with $v=|E|$, we denote the {\bf isoperimetric deficit} and the {\bf Fraenkel asymmetry} of $E$ by
\beq\label{fraenkel}
\delta_{\mathrm{iso}}(E):=\frac{P(E)}{P(B^{(v)})} -1, \quad \mathcal A(E):=  \inf_{x\in \mathbb{R}^N}\frac{|E\Delta B^{(v)}(x)|}{v},
\eeq
respectively. Finally, in this section, if $A$ and $B$ are positive quantities associated with a fixed  convex body $\C$, by
$$
A  \lesssim B
$$
we mean that there exists a constant $c(N,\C)>0$  such that $A\leq c(N,\C) B$.

\begin{lemma}[Estimate from above by convex cylinders]\label{determination of the isoperimetric profile}
Let $N\geq 3$, $\C\subset \mathbb{R}^N$ be a closed convex body with $1\leq d^*(\C)\leq N-2$, $Z$ be a $d^*(C)$-dimensional subspace as in Proposition~\ref{general structure lemma}, and $D:=\mathrm{cl}\,(\mathbf{p}_{Z^\perp}(\C))$. Then 
\begin{equation}\label{comparable to cylinder profile}
    I_\C(v) \leq I_{Z+ D}(v)\quad\forall v>0\,.
\end{equation}
\end{lemma}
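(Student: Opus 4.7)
The plan is to show that a suitable sequence of translates of $\C$ converges in the Kuratowski sense to the cylinder $Z+D$, and then combine the semicontinuity Lemma~\ref{etto} with the translation invariance of the perimeter. Since $d^*(\C)\ge 1$ forces $\C$ to be unbounded (Remark~\ref{dstarzero}), the recession cone $\C_\infty$ is nontrivial, and I would pick $z\in\mathrm{ri}(\C_\infty)\setminus\{0\}$; the use of the \emph{relative} interior of $\C_\infty$ will be essential in the obstruction step below.

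Set $\C_n:=\C-nz$, so that $I_{\C_n}(v)=I_\C(v)$ by translation invariance of the perimeter. Because $z\in\C_\infty$ the sequence $\{\C_n\}$ is monotone increasing, hence its Kuratowski limit is the closure of $\bigcup_n\C_n=\C+\R z$. Writing $\mathbf{p}_{z^\perp}$ for the orthogonal projection onto $z^\perp$, this reduces the claim $\C_n\to Z+D$ to the identity
\[
\mathrm{cl}\,(\mathbf{p}_{z^\perp}(\C))=(Z\cap z^\perp)+D.
\]
The inclusion $\subset$ is immediate from $\C\subset Z+D$ (Proposition~\ref{general structure lemma}(a)) and $z\in Z$. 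For the reverse inclusion I would fix $d\in\mathrm{int}(D)$, choose $u^*\in Z$ with $u^*+d\in\C$, and work with the $Z$-fiber $F_d:=\{u\in Z:u+d\in\C\}$, a convex subset of $Z$ whose recession cone contains $\C_\infty$. It would then suffice to show that the projection of $F_d$ along $z$ onto $Z\cap z^\perp$ is dense in $Z\cap z^\perp$; approximating $d\in D$ by elements of $\mathrm{int}(D)$ delivers the full inclusion.

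The density step is the main obstacle, and I would prove it by contradiction. If $\mathbf{p}_{Z\cap z^\perp}(F_d)$ failed to be dense, by convexity it would lie in a closed half-space $\{v\in Z\cap z^\perp:v\cdot w\le\alpha\}$ for some nonzero $w\in Z\cap z^\perp$. Since $z\cdot w=0$, this forces $F_d\subset\{u\in Z:u\cdot w\le\alpha\}$, and passing to the recession cone yields $\C_\infty\subset\{u:u\cdot w\le 0\}$. At this point the choice $z\in\mathrm{ri}(\C_\infty)$ becomes decisive: because $\mathrm{aff}(\C_\infty)$ is a linear subspace, every $e\in\mathrm{aff}(\C_\infty)$ satisfies $z\pm\varepsilon e\in\C_\infty$ for small $\varepsilon>0$, which together with $\C_\infty\subset\{u\cdot w\le 0\}$ and $z\cdot w=0$ forces $e\cdot w=0$. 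Hence $\mathrm{aff}(\C_\infty)\subset w^\perp$, so $\C_\infty\subset Z\cap w^\perp$. The support function of $\C$ in both directions $\pm w$ is then finite (no recession makes $\C$ unbounded along $w$), so $\C$ is bounded in the $w$-direction; combined with $\mathbf{p}_{Z^\perp}(\C)\subset D$ this shows that $\mathbf{p}_{(Z\cap w^\perp)^\perp}(\C)$ is bounded. The equivalent characterization of $d^*(\C)$ stated in the remark after Proposition~\ref{general structure lemma}, applied to the subspace $Z\cap w^\perp$ of dimension $d^*(\C)-1$, then gives $d^*(\C)\le d^*(\C)-1$, a contradiction.

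Once density is established the Kuratowski convergence $\C_n\to Z+D$ follows, and since $Z+D$ is itself a convex body (its interior is nonempty because $D$ has nonempty interior in $Z^\perp$, which in turn follows from $\C$ being a convex body), Lemma~\ref{etto} applies and yields $\limsup_n I_{\C_n}(v)\le I_{Z+D}(v)$. Together with $I_{\C_n}(v)=I_\C(v)$ this establishes \eqref{comparable to cylinder profile}.
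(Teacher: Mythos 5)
Your approach takes a genuinely different route from the paper's, but it contains a real gap. The paper iterates the translation-and-limit step: it passes from $\C$ to $\mathrm{span}\,\{z_1\}+D_1$ with $z_1\in\C_\infty$ and $D_1=\mathrm{cl}\,(\mathbf{p}_{z_1^\perp}(\C))$, then to $\mathrm{span}\,\{z_1,z_2\}+D_2$ with $z_2\in(D_1)_\infty$, and so on $d^*(\C)$ times, invoking Lemma~\ref{etto} once per step. You instead try to reach $Z+D$ in a \emph{single} translation along $z\in\mathrm{ri}(\C_\infty)$, which hinges on the identity $\mathrm{cl}\,(\mathbf{p}_{z^\perp}(\C))=(Z\cap z^\perp)+D$. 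That identity is false in general. The precise false step is the claim that $\C_\infty\subset w^\perp$ forces the support function of $\C$ to be finite in \emph{both} directions $\pm w$: only the direction cutting off $F_d$ is controlled, while in the opposite direction one can have $\sup_{x\in\C}x\cdot w=+\infty$ with sublinear growth, which the recession cone does not see. This is exactly the mechanism that makes $d^*(\C)$ exceed $\dim\C_\infty$, so it cannot be argued away.

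A concrete counterexample: let $\C'=\{(x_1,x_2)\in\R^2:\ x_1\ge 0,\ 0\le x_2\le\sqrt{x_1}\}$ and $\C=\C'\times[0,1]^{N-2}$. Then $\C_\infty=\{te_1:t\ge0\}$, $d^*(\C)=2\le N-2$, $Z=\mathrm{span}\,\{e_1,e_2\}$, $D=[0,1]^{N-2}$, and with $z=e_1\in\mathrm{ri}(\C_\infty)$ one has $F_d=\C'$ for every $d\in\mathrm{int}(D)$, so $\mathbf{p}_{\mathrm{span}\,\{e_2\}}(F_d)=\{te_2:t\ge0\}$ is \emph{not} dense, yet $d^*(\C)=2$ and no contradiction is available. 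Your argument produces $w=-e_2$, for which indeed $\C_\infty\subset w^\perp$ and $\sup_{x\in\C}x\cdot w=0$ is finite, but $\sup_{x\in\C}x\cdot(-w)=+\infty$, so $\mathbf{p}_{(Z\cap w^\perp)^\perp}(\C)$ is unbounded and the appeal to the remark after Proposition~\ref{general structure lemma} fails. Correspondingly $\mathrm{cl}\,(\mathbf{p}_{z^\perp}(\C))$ equals $\{te_2:t\ge0\}+[0,1]^{N-2}$, a proper subset of $(Z\cap z^\perp)+D$, and $\C-ne_1$ converges in the Kuratowski sense to a proper subset of $Z+D$. The paper's iteration is essential: each projection $D_i$ acquires new recession directions from what, at the previous stage, was only sublinear unboundedness, and one translation per recovered direction is required.
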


\begin{proof}
Let $z_1\in \C_\infty$. By (c), (d) and (e) of Proposition~\ref{general structure lemma} we have that  $\C-tz_1$ converges in the Kuratowski sense to $\mathrm{span}\,\{z_1\} + D_1$ as $t\to+\infty$, where $D_1=\mathrm{cl}\,(\mathbf{p}_{z_1^\perp}(\C))$ and $d^*(D_1)=d^*(\C)-1$. Then by Lemma~\ref{etto} we have that
$$
I_\C(v)\leq I_{\mathrm{span}\,\{z_1\} + D_1}(v)\,.
$$
If $d^*(\C)=1$ we have proved the claim. Otherwise, we pick any $z_2\in (D_1)_\infty$ and, arguing as above, we get that 
$D_1-tz_2\to \mathrm{span}\,\{z_2\} + D_2$ as $t\to+\infty$ in the Kuratowski sense, so that $\mathrm{span}\,\{z_1\} + D_1-tz_2\to\mathrm{span}\,\{z_1,z_2\} + D_2$, where $D_2=\mathrm{cl}(\mathbf{p}_{z_2^\perp}( D_1))=\mathrm{cl}(\mathbf{p}_{Z_2^\perp}( \C))$ and  $Z_2=\mathrm{span}\,\{z_1,z_2\}$. Now $d^*(D_2)=d^*(\C)-2$ and again, by upper semicontinuity
$$
I_\C(v)\leq I_{Z_2 + D_2}(v)\,.
$$
The conclusion then follows by iterating the argument.
\end{proof}

The following theorem contains Theorem \ref{theorem order of residue} as a particular case.

\begin{theorem}[Lower and upper bounds for $\mathcal{R}_\C$]\label{existence and uniform minimality}
    Let $N\geq 3$ and let $\C\subset \mathbb{R}^N$ be a  convex body with  $1\leq d^*(\C)\leq N-2$. Assume without loss of generality that $0\in\pa \C$. Then, there exists  $v_0\geq1$ % and $N_0\in\mathbb{N}$, 
    depending on $\C$ and $N$, such that for all $R \geq R_0= \big(\frac{2}{\omega_N})^{\frac1N}+1$ and $v\geq v_0$, any minimizer $E$ for $I_{\C,R\,v^{1/N}}(v)$ has one connected component and satisfies {\rm diam}$(E)\leq v^{\frac1N}d_0$, where $d_0$ is as in Proposition~\ref{prop:ICR}.
%\begin{enumerate}[(
    %\item $E$ has one connected component and $\mathrm{diam}\, E \leq d_0 v^{\frac{1}{N}}$.
%\end{enumerate}
Moreover,  if $v\geq v_0$
\begin{align}\label{first lower  residue bound}
 v^{\frac{d^*(\C)}{2N}} \lesssim\mathcal{R}_\C(v)\lesssim
     (\mathrm{diam}\,\C \cap \partial E)^{d^*(\C)} \lesssim v^{\frac{d^*(\C)}{N}}\,, %& 2\leq d^*(\C)\leq N-2\,,
   %\end{cases}
\end{align}
and
\begin{align}\label{isop deficit}
     \mathcal A(E)^2 \leq c(N)\delta_{\mathrm{iso}}(E)\lesssim&\frac{(\mathrm{diam}\, (\C \cap \partial E))^{d^*(\C)}}{v^{(N-1)/N}}\lesssim v^{-\frac{N-1-d^*(\C)}{N}}\,.
\end{align}
%Finally, assume in addition that $\C$ is a cylinder of the type $\C'+K$, where $\C'\subset Z$, with $Z$ a $d^*(\C)$-dimensional subspace, and $K\subset Z^\perp$ is a compact, $(N-d^*(\C))$-dimensional convex set. Then, there exists a minimizer $E$ of $I_\C(v)$ which satisfies \eqref{lambda min eq} for all $ F\subset \mathbb{R}^N\setminus \C$ as well as \eqref{first lower  residue bound} and \eqref{isop deficit}. 
\end{theorem}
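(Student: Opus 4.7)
The plan is to base everything on the algebraic identity
\[
\mathcal{R}_\C(v) + P(B^{(v)})\,\delta_{\rm iso}(E) = P(E;\partial \C)\,,
\]
obtained by decomposing $P(E)=P(E;\R^N\setminus\C)+P(E;\partial\C)$ (valid because $\overline E\subset \R^N\setminus\mathrm{int}\,\C$ forces $P(E;\mathrm{int}\,\C)=0$), substituting $I_\C(v)=P(E;\R^N\setminus\C)$, and subtracting $P(B^{(v)})$. All the claimed inequalities become statements about $P(E;\partial\C)$ or about the existence of good competitors, and the diameter estimate $\mathrm{diam}(E)\le d_0 v^{1/N}$ is just the per-component bound of Proposition~\ref{prop:ICR} once $E$ is shown to be connected.

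\emph{Upper bounds and one component.} Since $\delta_{\rm iso}(E)\ge 0$, the identity yields $\mathcal{R}_\C(v)\le P(E;\partial\C)\le \mathcal{H}^{N-1}(\partial\C\cap\overline E)$. Writing $\overline E$ as the union of the at most $I_0$ components $\overline{E_i}$, each of diameter at most $d_0 v^{1/N}$, the set $\partial\C\cap\overline{E_i}$ lies in a ball of radius $D_i\le d_0 v^{1/N}$; by the containment $\C\subset Z+D'$ of Proposition~\ref{general structure lemma} and monotonicity of perimeter under inclusion of convex bodies, $\mathcal{H}^{N-1}(\partial\C\cap B_{D_i})$ is controlled by $\mathcal{H}^{N-1}((Z+\partial D')\cap B_{D_i})$, and a direct parametrization by $(z,d)\in Z\times\partial D'$ with $|z|^2+|d|^2\le D_i^2$ gives $\lesssim D_i^{d^*(\C)}$. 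Summing over components proves the universal estimate $P(E;\partial\C)\lesssim v^{d^*(\C)/N}$, which sharpens to the chain \eqref{first lower  residue bound} once $E$ is known to be connected. For connectedness: if $E$ had $\kappa\ge 2$ components, the density estimates would force $v_i/v\in[c,1-c]$, and strict concavity of $t\mapsto t^{(N-1)/N}$ combined with the sharp isoperimetric inequality in $\R^N$ would give $P(E)\ge P(B^{(v)})(1+\delta_0)$ with $\delta_0>0$ a $\C$-independent constant; combining with $P(E)=I_\C(v)+P(E;\partial\C)\le\bigl(P(B^{(v)})+C v^{(k-1)/N}\bigr)+C v^{k/N}$, where the first bound comes from a ball-in-cylinder competitor for $I_{Z+D'}(v)$ (via Lemma~\ref{determination of the isoperimetric profile}) and the second from what we just proved, one concludes $\delta_0\lesssim v^{(k-N+1)/N}\to 0$, impossible for $k=d^*(\C)\le N-2$ and $v\ge v_0$ large enough.

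\emph{Lower bound on $\mathcal{R}_\C$.} By Lemma~\ref{determination of the isoperimetric profile}, $\mathcal{R}_\C(v)\ge \mathcal{R}_{Z+D'}(v)$, so it suffices to construct a competitor $F$ for $I_{Z+D'}(v)$ with $P(F;\R^N\setminus(Z+D'))\le P(B^{(v)})-c\,v^{k/(2N)}$. I choose a boundary point of $D'$ admitting an inscribed ball of radius $\rho>0$, lift it to $p_0\in Z+\partial D'$, place a ball of radius $r\simeq(v/\omega_N)^{1/N}$ tangent from the exterior to $\partial(Z+D')=Z+\partial D'$ at $p_0$, and then push its center inward along the outward normal by a depth $\epsilon$ depending only on $\C$. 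By translation invariance of $\partial(Z+D')$ along $Z$, the boundary is flat in the $k:=d^*(\C)$ directions of $Z$ and, thanks to the inscribed ball, lies below the paraboloid $-|y|^2/(2\rho)$ in the $N-k-1$ directions transverse to $Z$. A direct analysis of the resulting quadratic contact yields overlap volume $\gtrsim r^{k/2}\rho^{(N-k-1)/2}\epsilon^{(N+1)/2}$ and spherical-cap area $\gtrsim r^{k/2}\rho^{(N-k-1)/2}\epsilon^{(N-1)/2}$; after correcting $r$ so that $|F|=v$, the net perimeter change with respect to $P(B^{(v)})$ is dominated by the cap savings $-c\,r^{k/2}\rho^{(N-k-1)/2}\epsilon^{(N-1)/2}$, since the radius-adjustment cost $\sim r^{k/2-1}\epsilon^{(N+1)/2}$ is smaller by a factor $\epsilon/r$. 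Fixing $\epsilon$ as any sufficiently small $\C$-dependent constant therefore produces savings $\gtrsim r^{k/2}\simeq v^{k/(2N)}$.

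\emph{Deficit, asymmetry, and main obstacle.} Once $\mathcal{R}_\C(v)\ge 0$ is established, the identity rewrites as $P(B^{(v)})\delta_{\rm iso}(E)=P(E;\partial\C)-\mathcal{R}_\C(v)\le P(E;\partial\C)$, so \eqref{isop deficit} follows by substituting the upper bound on $P(E;\partial\C)$; the asymmetry inequality $\mathcal{A}(E)^2\le c(N)\delta_{\rm iso}(E)$ is the Fusco--Maggi--Pratelli sharp quantitative isoperimetric inequality. The main obstacle throughout is the lower-bound construction: one must select $\epsilon$ using only $\C$, verify that the quadratic approximation of $\partial D'$ remains quantitatively valid on the relevant scales $\sqrt{r\epsilon}\sim v^{1/(2N)}$ (flat directions) and $\sqrt{\rho\epsilon}$ (curved directions), and track the competition between the cap savings $\sim r^{k/2}\epsilon^{(N-1)/2}$ and the volume-correction cost $\sim r^{k/2-1}\epsilon^{(N+1)/2}$, which is resolved by taking $\epsilon$ to be a small fraction of the inscribed-ball radius $\rho$.
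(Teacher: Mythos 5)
Your argument is substantially the same as the paper's, and the main ideas are all there: (i) decomposing $P(E)=P(E;\R^N\setminus\C)+P(E;\partial\C)$, (ii) controlling $P(E;\partial\C)\lesssim v^{d^*(\C)/N}$ via the containment $\C\subset Z+D'$ and the per-component diameter bound from Proposition~\ref{prop:ICR}, (iii) ruling out multiple components by strict concavity of $t\mapsto t^{(N-1)/N}$ plus the density lower bound on component volumes, and (iv) proving the lower bound on $\mathcal{R}_\C$ by reducing to a cylinder $Z+D'$ via Lemma~\ref{determination of the isoperimetric profile} and attaching a large ball whose intersection with the cylinder produces the $v^{d^*(\C)/(2N)}$ savings.

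Two remarks worth making. First, your organizing identity $\mathcal{R}_\C(v)+P(B^{(v)})\delta_{\rm iso}(E)=P(E;\partial\C)$ silently assumes $P(E;\R^N\setminus\C)=I_\C(v)$; but $E$ is a minimizer only of the \emph{constrained} problem $I_{\C,R\,v^{1/N}}(v)$, which need not equal $I_\C(v)$. The paper handles this by choosing $R=R(v)$ so that $I_{\C,R(v)v^{1/N}}(v)\le I_\C(v)+1$ (for the $\mathcal{R}_\C$ bound) and by observing that the ``$+1$'' is harmless once the lower bound $\mathcal{R}_\C(v)\gtrsim v^{d^*(\C)/(2N)}\to\infty$ is in place; to obtain the conclusions for \emph{all} $R\ge R_0$ one then uses a ball-in-cylinder competitor, exactly as you note. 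You should make this slack explicit, as the identity is otherwise only an inequality in the wrong direction. Second, your lower-bound construction (inscribed ball at a boundary point of $D'$, quadratic contact analysis, explicit push-in depth $\epsilon$) is a genuinely different — and noticeably more delicate — implementation than the paper's: the paper simply picks a cube $Q=[-\alpha,\alpha]^{N-d^*(\C)}\subset\mathrm{ri}\,D$, places the ball $B_r(-re_N)$ with top at the center of $Q$, and bounds the spherical area inside the cylinder from below by projecting the cap onto the flat face $\{x_N=-\alpha\}$; no tangency, no paraboloid, no parameter $\epsilon$. Your construction works, but what you flag as the ``main obstacle'' (validity of the quadratic approximation) can be sidestepped entirely by using the inscribed ball $B_\rho$ itself as the inner barrier (so one computes the sphere's intersection with $Z+B_\rho$ directly, a closed-form estimate), which is essentially what the cube does for the paper. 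Both routes yield the same order $r^{d^*(\C)/2}\simeq v^{d^*(\C)/(2N)}$.
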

\begin{proof} Throughout the proof we will use the fact that, by Proposition~\ref{general structure lemma}, there exists a subspace $Z$  of dimension $d^*(\C)$, such that $\mathrm{cl}\,(\mathbf{p}_{Z^\perp}(\C))$ is bounded and
\begin{align}\label{exis theorem containment}
\C\subset Z+ \mathrm{cl}\,(\mathbf{p}_{Z^\perp}(\C))=:\widetilde \C\,.
\end{align}
  For each $v>0$, let us now choose $R(v)\geq R_0$ such that
\begin{align}\notag
I_{\C,R(v)v^{1/N}}(v)\leq I_\C(v)+1
\end{align}
and denote by $E_v\subset B_{R(v)v^{1/N}}\setminus \C$ a minimizer for $I_{\C,R(v)v^{1/N}}(v)$.
By Proposition~\ref{prop:ICR}
there exist $I_0\in\mathbb{N}$ and $d_0$, depending only on $N$, such that the number of connected components of $E_v$ is at most $I_0$ and
\begin{equation}\label{diameter bound}
    \mathrm{diam}\, (E')\leq d_0v^{\frac1N}\,,
\end{equation}
for any connected component $E'$ of $E_v$. Given $y\in\R^N$ we shall write $y=(y',y^\perp)$, with $y'\in Z$ and $y^\perp\in Z^\perp$. 
 Let $E'$ be a connected component of $E_v$ touching $\pa \C$ and let $x=(x',x^\perp)\in\pa E'\cap \pa \C$.  Note that $E'\subset S=\{y=(y',y^\perp):\,|y'-x'|\leq {\rm diam}\,(E')\}$. Hence, using also \eqref{exis theorem containment}, we have
 \begin{align}\label{tag}
    P(E'; \partial \C) &\leq P(\C\cap S) \leq P(\widetilde \C\cap S)\,.
%    &  =\mathcal{H}^{N-d^*(\C)}(\partial (\mathbf{p}_{Z^\perp}(\C)))(\mathrm{diam}\,(\C\cap\partial E))^{d^*(\C)} \\ \notag
%    &\leq  c(N,\C)v^{\frac{d^*(\C)}{N}}\,.
\end{align}
Note that $\widetilde \C\cap S=B\times \mathrm{cl}\,(\mathbf{p}_{Z^\perp}(\C))$, where $B=\{y'\in Z:\,|y'-x'|\leq {\rm diam}\,(E')\}$.
Therefore, denoting by $\pa_Z$ and $\pa_{Z^\perp}$ the boundary relative to $Z$ and $Z^\perp$, respectively,
\beq \label{4.6}
\begin{split}
 P(\widetilde \C\cap S) 
& \leq c(d^*(\C),N)\Big(\H^{d^*(\C)}(B)\mathcal{H}^{N-d^*(\C)-1}\big(\pa_{Z^\perp} (\mathbf{p}_{Z^\perp}(\C))\big) \\
 & \quad +\H^{d^*(\C)-1}(\pa_Z B)\mathcal{H}^{N-d^*(\C)}\big( (\mathbf{p}_{Z^\perp}(\C)\big)\Big)\\
&\lesssim \big({\rm diam}\,(E')^{d^*(\C)}+{\rm diam}\,(E')^{d^*(\C)-1}\big)\lesssim v^{\frac{d^*(\C)}{N}}\,, 
\end{split}
\eeq
provided that $v\geq1$, where in the last inequality  we used \eqref{diameter bound}. From this estimate, \eqref{tag} and the fact that there are at most $I_0$ connected components, we have $P(E_v; \partial \C)\lesssim v^{\frac{d^*(\C)}{N}}$. In turn, this implies 
\begin{align} 
    N\omega_N^{\frac{1}{N}}v^{\frac{N-1}{N}} &\leq P(E_v) = P(E_v;\mathbb{R}^N\setminus \C) + P(E_v;\partial \C) \nonumber \\
     &\leq I_\C(v)+1+ c(\C,N)v^{\frac{d^*(\C)}{N}} \label{4.7} \\
     &\leq N\omega_N^{\frac{1}{N}}v^{\frac{N-1}{N}} + c(\C,N)v^{\frac{d^*(\C)}{N}}\,. \nonumber
   % &\leq I_\C(v)+   \mathrm{O}\big((\mathrm{diam}\,(\C\cap\partial E))^{d^*(\C)}\big) \\ %\label{close to ball energy}
    %&\leq N\omega_N^{\frac{1}{N}}v^{\frac{N-1}{N}}+ c(\C,N)v^{\frac{d^*(\C)}{N}}\,,
\end{align}
%which in turn yields
%\beq\label{4.8}
%    N\omega_N^{\frac{1}{N}}v^{\frac{N-1}{N}}- \mathrm{O}\big(v^{\frac{d^*(\C)}{N}}\big) \leq I_\C(v)\,.
%\eeq
%Also, \eqref{isop deficit} is immediate from \eqref{close to ball energy}, and we deduce \eqref{ball close} from the quantitative form of the Euclidean isoperimetric inequality. %To see that \eqref{isop deficit} and \eqref{ball close} hold with $\mathcal{R}_\C(v)$ replacing $\mathrm{diam}\,(\C \cap \partial E)^{d^*(\C)}$, we observe that rather than estimating $P(E_v;\partial \C)$ in \eqref{close to ball energy} by $\mathrm{diam}\,(\C \cap \partial E)^{d^*(\C)}$
 Now we show that up to increasing the value of $v_0$ obtained in step two, any minimizer $E$ for $I_{\C,R\,v^{1/N}}(v)$ with $R \geq R_0$ has one connected component. By \ the isoperimetric inequality, $v^{-1/N}E=\cup_{i=1}^{I_0} E_i$ satisfies
\begin{align}\label{impossible perimeter bound}
    N\omega_N^{\frac{1}{N}}\sum_{i=1}^{I_0}|E_i|^{\frac{N-1}{N}}\leq \sum_{i=1}^{I_0}P(E_i) \leq N\omega_N^{\frac{1}{N}}+\mathrm{O}\big(v^{-\frac{N-1-d^*(\C)}{N}}\big)\,,
\end{align}
where the $E_i$'s are the connected components of $v^{-1/N}E$, $\sum_{i=1}^{I_0} |E_i|=1$ and the last inequality follows arguing as in  \eqref{4.7}\footnote{Note indeed that \eqref{4.7} holds with  $I_\C(v)+1$ replaced by $I_{\C,R\,v^{1/N}}(v)$ and with $E_v$ any  minimizer for the problem defining $I_{\C,R\,v^{1/N}}(v)$, provided $R\geq R_0$.}. But for all $i$, $|E_i|\geq c_0 r_0^N$, where $r_0$ is as in Proposition~\ref{prop:ICR}, and so by the concavity of $t\mapsto t^{(N-1)/N}$, $I_0\geq 2$ is impossible for large enough $v$. Therefore, recalling \eqref{tag} and \eqref{4.6}, and arguing as in \eqref{4.7} we have
\begin{align*} 
    N\omega_N^{\frac{1}{N}}v^{\frac{N-1}{N}} &\leq P(E_v) = P(E_v;\mathbb{R}^N\setminus \C) + P(E_v;\partial \C) \nonumber \\
     &\leq I_\C(v)+1 +\mathrm{O}\big((\mathrm{diam}\,(\C\cap\partial E))^{d^*(\C)}\big)\,.    
     \end{align*}
Hence,  the last two inequalities in \eqref{first lower  residue bound} follow from the above inequality and  \eqref{diameter bound}. In turn, \eqref{isop deficit} follows from these inequalities and from the quantitative isoperimetric inequality proved in \cite{FMP08}.
\par
\medskip
We now prove the first inequality in \eqref{first lower  residue bound}. By Lemma~\ref{determination of the isoperimetric profile} it suffices to estimate $\mathcal{R}_\C$ from below when $\C$ is a convex cylinder of the form $Z+D$, with $D\subset Z^\perp$ bounded and $Z$ a subspace of dimension $d^*(\C)$. With no loss of generality we may assume that $Z=\{x\in\R^N:\, x=(x_1,\dots,x_{d^*(C)},0\dots,0)\}$ and that
the cube $Q:= [-\alpha,\alpha]^{N-d^*(\C)}\subset \mathrm{ri}\, D$.  
\par\noindent
In the following we will denote a point in $\R^N$ as $x=(x',y',x_N)$, where $x'\in Z$, $y'=(x_{d^*(\C)+1},\dots,x_{N-1})$. 
We will simply attach a large ball to $Z+D$, utilizing $Q$ to bound from below $\mathcal{R}_{Z+D}(v)$. 
For every $r$, consider the ball $B_r(-re_N)$. By the choice of  $Q$, we estimate
\begin{align}
    \mathcal{H}^{N-1}(\pa B_r(-re_N)\cap(Z+D)) &\geq \mathcal{H}^{N-1}\left(B_r(-re_N)\cap \big(Z\times[-\alpha,\alpha]^{N-1-d^*(\C)}\times\{-\alpha\}\big)\right) \nonumber\\ 
    &=\int_{[-\alpha,\alpha]^{N-1-d^*(\C)}}dy'\int_{\{x'\in Z:\, |x'|^2+|y'|^2\leq 2r\alpha-\alpha^2\}}dx'\label{perimeter inside rectangle} \\
    &\geq c(N,d^*(\C))\alpha^{N-1-d^*(\C)}(\alpha r)^{\frac{d^*(\C)}{2}}\nonumber
\end{align}
for $r$ sufficiently large. By a similar argument we may estimate
\begin{equation}\label{volume construction estimate}
  \omega_N r^N - c(N,\C)r^{\frac{d^*(\C)}{2}}\mathrm{diam}\,(D)^{N- d^*(\C)}\leq v_r:=|B_r(-re_N)\setminus \C| \leq \omega_N r^N\,.
\end{equation}
Up to changing the constant $c(N,\C)$ as necessary, we combine \eqref{perimeter inside rectangle} and \eqref{volume construction estimate} to obtain
\begin{align*}
    I_\C(v_r) &\leq P(B_r(-re_N);\mathbb{R}^N\setminus \C)\leq N\omega_Nr^{N-1}-c(N,\C)r^{\frac{d^*(\C)}{2}}\\
    &\leq N\omega_N\left(\frac{v_r}{\omega_N}+c(N,\C)r^{\frac{d^*(\C)}{2}}\right)^{\frac{N-1}{N}}- c(N,\C)r^{\frac{d^*(\C)}{2}}\\ 
    &\leq N\omega_N^{\frac{1}{N}}v_r^{\frac{N-1}{N}}- c(N,\C)v_r^{\frac{d^*(\C)}{2N}}
\end{align*}
for large enough $r$, where all the constants above may change from line to line. Since $\mathcal{R}_\C(v_r) =N\omega_N^{\frac{1}{N}}v_r^{\frac{N-1}{N}}-I_\C(v_r)$, we have proven the lower bound in \eqref{first lower  residue bound}.
%We now show that when  $\C$ is a cylinder with bounded cross section, there exists a minimizer of $I_\C(v)$ for  all $v>0$. Indeed, by Proposition~\ref{prop:ICR} any minimizer $E_R$ of $I_{\C,R v^{1/N}}(v)$ for $R\geq R_0$ has at most $I_0$ connected components of diameter at most $d_0 v^{1/N}$. Letting $R\to \infty$ and availing ourselves of the translation invariance outside a cylinder and the convergence of $I_{\C,R\,v^{1/N}}$ to $I_\C(v)$, we see that up to a subsequence and translations, the $E_R$'s converge to a minimizer $E$ of $I_\C(v)$ as $R\to \infty$. The veracity of \eqref{lambda min eq} among any $F\subset\!\subset\mathbb{R}^N\setminus \C$ follows from noticing that  for any $B_R(0)$ containing $E_R\cup F$ \eqref{lambda min eq} is satisfied by $E_R$ and then passing to the limit as $R\to\infty$.
 \end{proof}\par\medskip

In the next lemma we complement the upper bound given in \eqref{comparable to cylinder profile} by a corresponding lower bound.

\begin{lemma}[Estimate from below by convex cylinders]\label{determinationbis}
Let $N\geq 3$, $\C\subset \mathbb{R}^N$ be a closed convex body with $1\leq d^*(\C)\leq N-2$, and $Z$ be a $d^*(C)$-dimensional subspace as in Proposition~\ref{general structure lemma} with $D:=\mathrm{cl}\,(\mathbf{p}_{Z^\perp}(\C))$. Then there exist $v_0>0$ and $c(N, \C)$ such that
\begin{equation}\label{comparablebis}
    I_{Z+ D}(v) - c(N,\C)v^{\frac{d^*(\C)-1}{N}} \leq I_\C(v) \quad\forall v\geq v_0\,.
\end{equation}
In particular, 
\begin{equation}\label{seconda}
    0\leq \mathcal{R}_\C(v)-\mathcal{R}_{Z + D}(v) \lesssim v^{\frac{d^*(\C)-1}{N}}\quad\forall v\geq v_0\,.
\end{equation}
\end{lemma}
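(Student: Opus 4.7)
The upper half of \eqref{comparablebis}, namely $I_\C(v)\le I_{Z+D}(v)$, is already Lemma~\ref{determination of the isoperimetric profile}, and \eqref{seconda} follows from \eqref{comparablebis} together with the identity $\mathcal{R}_\C(v)-\mathcal{R}_{Z+D}(v)=I_{Z+D}(v)-I_\C(v)$. The point of the argument is thus the lower bound on $I_\C(v)$; my plan is to build a competitor for $I_{Z+D}$ at a slightly smaller volume out of a near-minimizer of $I_\C$, and then restore the lost volume using mild regularity of the cylinder's isoperimetric profile.

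Given $R\ge R_0$ and $v\ge v_0$ as in Theorem~\ref{existence and uniform minimality}, I would take a minimizer $E$ of the constrained problem $I_{\C,Rv^{1/N}}(v)$ and set $F:=E\setminus(Z+D)$. Since $F=E$ on the open set $(Z+D)^c$, and $(Z+D)^c\subset\C^c$ because $\C\subset Z+D$, the perimeter in the cylinder exterior satisfies
\[
P\bigl(F;\R^N\setminus(Z+D)\bigr)=P\bigl(E;\R^N\setminus(Z+D)\bigr)\le P\bigl(E;\R^N\setminus\C\bigr)=I_{\C,Rv^{1/N}}(v).
\]
Theorem~\ref{existence and uniform minimality} provides $\mathrm{diam}(E)\le d_0 v^{1/N}$, so intersecting a ball of that radius with the cylinder $Z+D$ and integrating over the bounded cross-section $D\subset Z^\perp$ yields the elementary slab bound
\[
w:=v-|F|=\bigl|E\cap((Z+D)\setminus\C)\bigr|\le c(N,\C)\,v^{d^*(\C)/N}.
\]
Hence $F$ is admissible for $I_{Z+D}(v-w)$, giving $I_{Z+D}(v-w)\le I_{\C,Rv^{1/N}}(v)$.

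It remains to pass from $I_{Z+D}(v-w)$ to $I_{Z+D}(v)$ while paying only $v^{(d^*(\C)-1)/N}$ in perimeter. Here I would use that $Z+D$ is a convex cylinder: Remark~\ref{rm:existence} supplies unconstrained minimizers of $I_{Z+D}(v')$ for every $v'$, and the limit procedure there transfers the $\Lambda$-minimality with $\Lambda=\Lambda_0(v')^{-1/N}$ from Proposition~\ref{prop:ICR} to the unconstrained problem; the derivative formula of Proposition~\ref{prop:ICR}, combined with \eqref{ICR2}, then yields $|I_{Z+D}'(v')|\le\Lambda_0(v')^{-1/N}$ a.e. Since $w\lesssim v^{d^*(\C)/N}$ with $d^*(\C)\le N-2$ forces $v-w\ge v/2$ for large $v$, integrating the derivative bound over $[v-w,v]$ produces
\[
I_{Z+D}(v)-I_{Z+D}(v-w)\le c(N)\,w\,v^{-1/N}\le c(N,\C)\,v^{(d^*(\C)-1)/N}.
\]
Combining with the previous step and sending $R\to\infty$ via \eqref{ICR2} delivers \eqref{comparablebis}.

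The step I expect to be the main obstacle is precisely the Lipschitz-type estimate on $I_{Z+D}$. Restoring the lost volume $w$ by attaching a separate ball to $F$ would cost $N\omega_N^{1/N}w^{(N-1)/N}$ in perimeter, which is strictly larger than $v^{(d^*(\C)-1)/N}$ for $d^*(\C)\le N-2$ and therefore yields no useful information. The sharp price $w\,v^{-1/N}$ can only be paid by perturbing a set whose perimeter is already of order $v^{(N-1)/N}$ by the small relative amount $w/v$; this is exactly what the mean-curvature bound of order $(v')^{-1/N}$ for unconstrained minimizers of the cylinder encodes. The delicate point is to legitimize the passage from Proposition~\ref{prop:ICR}, which is stated for the constrained profile $I_{\C,R}$, to the derivative bound for the unconstrained $I_{Z+D}$ on the whole interval $[v-w,v]$; this becomes routine by combining the limit procedure of Remark~\ref{rm:existence} with the fact that $w/v\to 0$ as $v\to\infty$.
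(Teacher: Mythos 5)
Your proposal is correct and follows essentially the same route as the paper: compare a constrained minimizer $E$ of $I_{\C,Rv^{1/N}}(v)$ against $E\setminus(Z+D)$ via $\C\subset Z+D$, bound the lost volume by $v^{d^*(\C)/N}$ using the diameter bound and the bounded cross-section $D$, and recover that volume at a cost controlled by integrating a derivative bound of order $t^{-1/N}$ on the cylinder's isoperimetric profile. The one cosmetic difference is that the paper applies the derivative bound directly to the \emph{constrained} profile $I_{Z+D,Rv^{1/N}}$ (available from Proposition~\ref{prop:ICR}) and only sends $R\to\infty$ at the very end, which sidesteps the extra step you flag of transferring the Lipschitz bound to the unconstrained $I_{Z+D}$ via Remark~\ref{rm:existence}.
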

\begin{proof}
By Theorem \ref{existence and uniform minimality} (and assuming without loss of generality $0\in \pa\C$), given $v\geq v_0$ and $R\geq R_0$ there exists a connected minimizer  $E_v$ of the problem defining $I_{\C,R\,v^{1/N}}(v)$ such that  $\mathrm{diam} \,E_{v} \leq d_0 v^{\frac{1}{N}}$.
By the containment of $\C$ in $Z+ D$, we may write %and the uniform bound on the number of connected components and their diameters, we ma write 
\begin{align}\label{lower bound by D step 1}
 P(E_v;\mathbb{R}^N\setminus \C)     &\geq P(E_v;\mathbb{R}^N\setminus (Z+ D)) \,.
\end{align}
Now by the diameter bound on $E_v$ and boundedness of $D$, we know that 
\beq\label{stima volume}
\begin{split}
    |E_v \setminus (Z+ D)| &\geq |E_v \setminus \C| - |E_{v} \cap (Z+ D)|  \\ 
    &\geq v- c(N,\C)(\mathrm{diam} \,E_{v})^{d^*(\C)}(\mathrm{diam} \,D)^{N-d^*(\C)} \\
    &\geq v - cv^{\frac{d^*(\C)}{N}}\geq v_0\,,
\end{split}
\eeq
where $c=c(N,\C)>0$, provided $v$ is sufficiently large.

\medskip

Recall now that by Proposition~\ref{prop:ICR} for a.e. $t\in (v- c v^{\frac{d^*(\C)}{N}},v)$ and for $R$  sufficiently large,  we have that $|I_{Z+D,R\,v^{1/N}}'(t)|\leq \Lambda_0 t^{-\frac 1N}$. Thus, 
\beq\label{stimetta}
    0\leq I_{Z+D,R\,v^{1/N}}(v) - I_{Z+D,R\,v^{1/N}}
    (v-cv^{\frac{d^*(\C)}{N}}) \leq \Lambda_0(N) \int_{v-cv^{\frac{d^*(\C)}{N}}}^vt^{-\frac{1}{N}}\,dt 
    \leq c(N,\C)v^{\frac{d^*(\C)-1}{N}}\,.
\eeq
%\textcolor{red}{SIAMO ARRIVATI QUI}
%Finally, we show that
%\begin{equation}\notag
%    I_{\mathrm{span}\,\{z\}+ D}(v) - \mathrm{O}(1) \leq I_\C(v) \leq I_{\mathrm{span}\,\{z\}+  D}(v)\quad\forall v\geq v_0\,.
%\end{equation}
Combining \eqref{lower bound by D step 1} with \eqref{stimetta}, and taking into account \eqref{stima volume}, we get
\begin{align}\notag
   I_{\C,R\,v^{1/N}}(v) &\geq P(E_v;\mathbb{R}^N\setminus (Z+ D)) \\ \notag
    &\geq I_{Z+ D, R\,v^{1/N}}(v-c v^{\frac{d^*(\C)}{N}}) \\ \notag
    &\geq  I_{Z+ D, R\,v^{1/N}}(v) - c(N,\C)v^{\frac{d^*(\C)-1}{N}}\,.
\end{align}
Letting $R\to\infty$, we conclude \eqref{comparablebis}. Finally, \eqref{seconda} follows at once from \eqref{comparablebis} and \eqref{comparable to cylinder profile}.
\end{proof}
\begin{remark}\label{rm:d*=1}
Note that by the above lemma, if $d^*(\C)=1$, then the behaviour of the residue $\mathcal{R}_{\C}$ is determined, up to a constant, by the residue $\mathcal{R}_{\textup{span\,}z+D}$, $D=\mathrm{cl}(\mathbf{p}_{z^\perp}(\C))$, with $z$ being any point of $\C_\infty$. 
In particular, in the physical case $N=3$,  we can reduce the study of  isoperimetric residues to the case of convex cylinders.  
\end{remark}

We conclude with the following proposition, which provides an estimate on the proximity to balls of large volume isoperimetric sets. For simplicity we assume $\C$ to be a convex cylinder, as in this case we can ensure the existence of minimizers for the relative isoperimetric problem (see Remark~\ref{rm:existence}). Given two compact sets $K_1, K_2$, we denote here by
$$
  \mathrm{hd}\,(K_1, K_2)= \max\big\{\max_{x\in K_1}\mathrm{dist}\,(x,K_2),\max_{x\in K_1}\mathrm{dist}\,(x,K_1)\big\}\,,
$$
their Hausdorff distance.

\begin{proposition}[Uniform convergence to balls]\label{cylinder hausdorff lemma take 2}
Let $1\leq k\leq N-2$  and let $\C=Z+D$, with $Z$ a $k$-dimensional subspace and $D\subset Z^\perp$ a compact $(N-k)$-dimensional convex set.  Then there exists $v_1=v_1(N, \C)>0$ such that if $E$ is a minimizer for the problem defining $I_\C(v)$, with $v\geq v_1$, and  if the ball $B^{(v)}(x_0)$ is optimal for the definition  \eqref{fraenkel} of $\mathcal A(E)$, 
then
\begin{align}\label{hausdorff estimate}
    \frac{\mathrm{hd}\,(\partial E \setminus \C, \partial B^{(v)}(x_0))}{v^{1/N}}\lesssim \frac{(\mathrm{diam}\,\C \cap \partial E)^{k/(2N)}}{v^{(N-1)/(2N^2)}}\lesssim v^{-(N-1-k)/(2N^2)}\,.
\end{align}
\end{proposition}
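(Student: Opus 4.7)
The plan is to convert the $L^1$-closeness between the minimizer $E$ and the Fraenkel-optimal ball $B^{(v)}(x_0)$, as quantified by $\mathcal{A}(E)$ via Theorem~\ref{existence and uniform minimality}, into Hausdorff closeness of their boundaries, by means of two-sided density estimates. Since $\C$ is a cylinder, Remark~\ref{rm:existence} provides a minimizer $E$ together with the $\Lambda$-minimality that underlies Lemma~\ref{lm:density} applied to both $E$ and $E^c$. Writing $D_v := \mathrm{diam}(\C \cap \partial E)$, Theorem~\ref{existence and uniform minimality} yields
\[
\mathcal{A}(E) \,\lesssim\, \frac{D_v^{k/2}}{v^{(N-1)/(2N)}},\qquad |E\,\Delta\, B^{(v)}(x_0)| \,=\, \mathcal{A}(E)\,v\,,
\]
which is the input for the two one-sided estimates below.

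\medskip

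For $x \in \partial E \setminus \C$, set $d := \mathrm{dist}(x,\partial B^{(v)}(x_0))$. The ball $B_d(x)$ lies entirely inside or entirely disjoint from $B^{(v)}(x_0)$, so the two-sided density of $E$ at $x$ (valid for $d \le r_0 v^{1/N}$) gives
\[
c_0\,d^N \,\le\, \min\bigl(|B_d(x)\cap E|,\,|B_d(x)\setminus E|\bigr) \,\le\, |E\,\Delta\, B^{(v)}(x_0)|\,,
\]
so that $d/v^{1/N} \lesssim \mathcal{A}(E)^{1/N} \lesssim D_v^{k/(2N)}/v^{(N-1)/(2N^2)}$. The smallness condition $d\leq r_0 v^{1/N}$ is verified a posteriori because the right-hand side vanishes as $v \to \infty$, and the second inequality in \eqref{hausdorff estimate} then follows from the bound $D_v \leq d_0 v^{1/N}$.

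\medskip

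For the reverse direction, let $y \in \partial B^{(v)}(x_0)$ and $d' := \mathrm{dist}(y,\partial E)$. Running the analogous argument with the trivial two-sided density of the round ball on scales $d' \lesssim v^{1/N}$: if $B_{d'}(y)\cap \partial E = \emptyset$, connectedness forces $B_{d'}(y) \subset E$ or $B_{d'}(y) \subset E^c$, giving $\min(|B_{d'}(y)\cap B^{(v)}(x_0)|,|B_{d'}(y)\setminus B^{(v)}(x_0)|) \le |E\,\Delta\, B^{(v)}(x_0)|$ and hence $d'/v^{1/N}\lesssim \mathcal{A}(E)^{1/N}$.

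\medskip

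The main technical obstacle is upgrading $\mathrm{dist}(y,\partial E)$ to $\mathrm{dist}(y,\partial E \setminus \C)$: the nearest point of $\partial E$ to $y$ could lie on the contact set $\partial E \cap \partial \C$ rather than on the free boundary. To handle this, one exploits that $\partial E \cap \partial \C \subset \C \cap \partial E$ has diameter at most $D_v$ and that its relative boundary within $\partial E$ lies in $\overline{\partial E \setminus \C}$, together with the volume bound $|B^{(v)}(x_0)\cap \C| \le |E\,\Delta\, B^{(v)}(x_0)| \lesssim \mathcal{A}(E)\,v$ (which holds since $E \cap \C = \varnothing$) and the classical spherical-cap identity $V\sim \rho^{N+1}/r$ relating cap volume, base radius and sphere radius. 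Coupled with the Hausdorff proximity of $\partial E$ and $\partial B^{(v)}(x_0)$ already obtained in the first direction, these allow one to control the diameter of the ``cap rim'' $\partial B^{(v)}(x_0)\cap \partial \C$, and hence the distance from any point of the cap to $\overline{\partial E \setminus \C}$. Matching the resulting cap contribution with the claimed rate $D_v^{k/(2N)}/v^{(N-1)/(2N^2)}$ over the full admissible range of $D_v$ is the delicate step; the other pieces assemble routinely.
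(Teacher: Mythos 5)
Your overall architecture is the same as the paper's: translate the $L^1$-closeness $\mathcal{A}(E)\lesssim\sqrt{\delta_{\mathrm{iso}}(E)}\lesssim D_v^{k/2}/v^{(N-1)/(2N)}$ from Theorem~\ref{existence and uniform minimality} into a Hausdorff bound by splitting into the two one-sided suprema that make up $\mathrm{hd}\,(\partial E\setminus\C,\partial B^{(v)}(x_0))$ and, for each, producing a ball of radius comparable to the distance whose intersection with (or complement in) $E$ injects into $E\Delta B^{(v)}(x_0)$. Your forward direction, taking $x\in\mathrm{cl}(\partial E\setminus\C)$ and using the two-sided density estimate for $E$ coming from the $\Lambda$-minimality of Proposition~\ref{prop:ICR} and Lemma~\ref{lm:density}, is essentially identical to the paper's first case (equation \eqref{case one hausdorff}), and your derivation of the second inequality from $D_v\lesssim d_0 v^{1/N}$ matches as well.

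The reverse direction is where you and the paper diverge and where the proposal is incomplete. The paper takes $x\in\partial B^{(v)}(x_0)$ with $h=\mathrm{dist}(x,\partial E\setminus\C)$ and asserts outright that $B_h(x)\subset E$ or $B_h(x)\subset\R^N\setminus E$, then runs the same volume comparison as in case one; no further discussion is given. You instead set $d'=\mathrm{dist}(y,\partial E)$, for which the dichotomy $B_{d'}(y)\subset E$ or $B_{d'}(y)\subset E^{c}$ is immediate (the ball avoids all of $\partial E$), and then observe that to close the argument one must pass from $\mathrm{dist}(y,\partial E)$ to $\mathrm{dist}(y,\partial E\setminus\C)$. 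That passage is precisely the content of the paper's unjustified assertion: a ball of radius $\mathrm{dist}(x,\partial E\setminus\C)$ is only guaranteed to avoid the free boundary, not the contact set $\partial E\cap\C$, and if it meets relative-interior points of the contact set then $B_h(x)\cap E=B_h(x)\setminus\C$, which is neither $B_h(x)$ nor $\varnothing$. So your concern targets exactly the spot where the paper's own proof is terse. Your proposed remedy — bounding the rim of the spherical cap $\partial B^{(v)}(x_0)\cap\partial\C$ via the cap volume identity and the already-established one-sided Hausdorff bound — is a reasonable idea (and close in spirit to the heuristics in Remark~\ref{enveloping remark}), but you explicitly stop short of carrying it out and matching the rate $D_v^{k/(2N)}/v^{(N-1)/(2N^2)}$; you acknowledge it as ``the delicate step.'' As written, then, the proposal does not constitute a complete proof: the second one-sided estimate, and hence \eqref{hausdorff estimate}, is not established. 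To finish you would either need to verify the paper's dichotomy directly (e.g.\ by showing the contact set cannot separate $B_h(x)$ from $\overline{\partial E\setminus\C}$ at that scale, for instance via a quantitative version of the non-enveloping discussion), or complete the cap-rim computation you sketch and show it yields the claimed exponent.
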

\begin{remark}
It should be noted that  for general convex sets $\C$, with $1\leq d^*(\C)\leq N-2$,  the conclusion of Proposition~\ref{cylinder hausdorff lemma take 2} applies to the generalized minimizers introduced in  Remark~~\ref{rm:existence}, with $k=d^*(\C)$.
\end{remark}

\begin{proof}[Proof of Proposition~\ref{cylinder hausdorff lemma take 2}]
Let $v\geq v_0$, where $v_0$ is from Theorem \ref{existence and uniform minimality}, so that any minimizer $E$ is connected.
% We recall
%\begin{equation}\notag
%    \mathrm{hd}\,(\partial E\setminus \C, \partial B^{(v)}(x_0))= \max\{\sup_{x\in \partial E \setminus \C}\mathrm{dist}\,(x,\partial B^{(v)}(x_0)),\sup_{x\in\partial B^{(v)}(x_0)}\mathrm{dist}\,(x,\partial E \setminus \C) \}\,.
%\end{equation}
Recall also that by Proposition~\ref{prop:ICR}
\begin{equation*}
P(E;\mathbb{R}^N\setminus \C) \leq P(F;\mathbb{R}^N\setminus \C)+\Lambda_0v^{-\frac{1}{N}}\big||F|-|E|\big|\quad \forall F\subset \R^N\setminus \C\,.
\end{equation*}
In turn, this implies by Lemma~\ref{lm:density} together with a rescaling argument (see also the proof of Proposition~\ref{prop:ICR}) that 
\beq\label{upper lower density est}
\min\left\{|E\cap B_r(x)|, |B_r(x)\setminus E| \right\}\geq c_0r^N \quad \text{for all $0<r\leq r_0v^{\frac1N}$.}
\eeq
for a suitable $r_0=r_0(N)>0$. 
Suppose now that for some $x\in \mathrm{cl}(\partial E \setminus \C)$,
\begin{equation}\label{achieve case one}
h:=\mathrm{hd}\,(\partial E\setminus \C, \partial B^{(v)}(x_0))=\mathrm{dist}\,(x,\partial B^{(v)}(x_0))>0\,;    
\end{equation}
we will handle the other case for computing $h$ in \eqref{case two h}. Then $B_h(x) \cap \partial B^{(v)}(x_0)=\emptyset$, so due to \eqref{upper lower density est} and the quantitative isoperimetric inequality, we get
\begin{align}\label{case one hausdorff}
    C(N)\sqrt{\delta_{\mathrm{iso}}(E)}\geq \frac{|E\Delta B^{(v)}(x_0)|}{v}\geq 
    c_0\frac{\min\{h,r_0v^{1/N}\}^N }{v}\,.
\end{align}
On the other hand, since $d^*(\C)=k$,  \eqref{isop deficit} reads
\begin{equation}\label{dstar one isop def}
\delta_{\mathrm{iso}}(E)\lesssim \frac{(\mathrm{diam}\, \C \cap \partial E)^k}{v^{(N-1)/N}}\lesssim  v^{-(N-1-k)/N}\,,
\end{equation}
in which case the minimum in \eqref{case one hausdorff} must achieved by $h$ if $v\geq v_1$ and we choose $v_1\geq v_0$ large enough. Combining \eqref{case one hausdorff} and \eqref{dstar one isop def}, we arrive at
\begin{equation}\label{case one multiplied hausdorff}
    h \leq C(N) \delta_{\mathrm{iso}}^{1/(2N)}(E)v^{1/N}\lesssim v^{1/N}\frac{(\mathrm{diam}\, \C \cap \partial E)^{k/(2N)}}{v^{(N-1)/(2N^2)}}\,.
\end{equation}
Since $\mathrm{diam}\, E\leq d_0 v^{1/N}$ by \eqref{diameter bound}, dividing \eqref{case one multiplied hausdorff} by $v^{1/N}$ yields \eqref{hausdorff estimate} in the case that \eqref{achieve case one} holds. Conversely, if there exists $x\in \partial B^{(v)}(x_0)$ such that
\begin{align}\label{case two h}
    h:= \mathrm{hd}\,(\partial E \setminus \C, \partial B^{(v)}(x_0))=\mathrm{dist}\,(x,\partial E \setminus \C)>0\,,
\end{align}
then either $B_h(x) \subset E$ or $B_h(x)\subset \mathbb{R}^N\setminus E$. An analogous argument as in the previous case leads again to \eqref{case one multiplied hausdorff} and thus to \eqref{hausdorff estimate}.\end{proof}\par\medskip
    \begin{remark}\label{enveloping remark}
     Improved estimates on the Hausdorff distance of $\partial E$ to the large ball could be used to determine an upper bound for $\mathcal{R}_\C(v)$. In a nutshell, one would need to prove Lemma~\ref{cylinder hausdorff lemma take 2} with $\partial E$ in \eqref{hausdorff estimate}  instead of $\partial E \setminus \C$. Let us explain why these estimates should hold and then how the argument would go. Assume for simplicity $\C=Z+D$ with $\dim Z=k=d^*(\C)\leq N-2$, $\dim D=N-d^*(\C)$ and $D\subset Z^\perp$. The estimate \eqref{hausdorff estimate} with $\partial E$ would hold if we knew for example that minimizers $E$ never ``envelop" $\C$; that is, there are no slices $\C_z=\C\cap (z+D)$, with $z\in Z$, such that $\partial \C_z\subset \partial E$. If this were true (and it seems like it should be - why should a minimizer envelop the obstacle?), then every point in $\partial E$ would be close to a point in $\partial E \setminus \C$ and thus \eqref{hausdorff estimate} would hold with $\partial E$. Thus we would have
    \begin{align}\notag
    \frac{\mathrm{hd}\,(\partial E , \partial B^{(v)}(x_0))}{v^{1/N}}\lesssim \frac{(\mathrm{diam}\,\C \cap \partial E)^{d^*(\C)/(2N)}}{v^{(N-1)/(2N^2)}}\,.
\end{align}
    Setting $r_v=(v/\omega_N)^{1/N}$ and $h=\mathrm{hd}\,(\partial E, \partial B^{(v)}(x_0))$, 
\begin{equation}\label{containment of E in annulus}
    \partial E \cap \C \subset \partial E \subset \overline{B_{r_v+h}(x_0)}\setminus B_{r_v-h}(x_0)\,.
\end{equation}
From the Pythagorean theorem we know that the longest line segment contained in $\overline{B_{r_v+h}(x_0)}\setminus B_{r_v-h}(x_0)$ has length bounded by $4\sqrt{r_vh}$. Therefore, \eqref{containment of E in annulus} and \eqref{hausdorff estimate} imply that for $v\geq v_1$
\begin{align}\notag
    \mathrm{diam}\,(\C \cap \partial E) \lesssim \sqrt{rh} \lesssim  \sqrt{\frac{(\mathrm{diam}\,\C \cap \partial E)^{d^*(\C)/(2N)}}{v^{(N-1)/(2N^2)}}\,v^{2/N}}\,.
\end{align}
Rearranging this, we find
\begin{align}\notag
    \mathrm{diam}\,(\C\cap \partial E) \lesssim v^{\frac{3N+1)}{N(4N-d^*(\C))}}\,.
\end{align}
Recalling the upper bound in \eqref{first lower  residue bound}, the above estimate would lead to the improved upper bound
$$
\mathcal R_\C(v) \lesssim v^{\frac{(3N+1)d^*(\C)}{N(4N-d^*(\C))}}\,.
$$
Note that $\frac{(3N+1)d^*(\C)}{N(4N-d^*(\C))}<\frac{d^*(\C)}{N}$. Lastly, we remark that this argument is predicated on detailed geometric information on minimizers. Based on \cite{MN}, it stands to reason that any such a resolution of minimizers and their energies would require a $\e$-regularity criterion tailored to the cylindrical geometry of $C=Z + D$. In \cite{MN}, the idea is that, for large volumes, the compact set outside which one is solving an isoperimetric problem is so small relative to the length scale $v^{1/N}$ set by the minimizer that it functions like an isolated singularity of the bounded mean curvature hypersurface $\pa E$. Here this role would instead be played by $C$, which, when the volume is large, acts as the $k$-dimensional subspace $Z$.
\end{remark}

%\bibliographystyle{siam}
%\bibliography{bibliography}

\begin{thebibliography}{99}

\bibitem{Allard} {\sc W.~K. Allard}, {\em On the first variation of a varifold},
Ann. of Math. (2) 95 (1972), pp.417--491.

\bibitem{AllardAlmgren} {\sc W.~K. Allard, and F.~J. Almgren, Jr.},
{\rm On the radial behavior of minimal surfaces and the uniqueness of their tangent cones},
Ann. of Math. (2) 113 (1981), no.2, pp. 215--265.

\bibitem{A}
{\sc F.~J. Almgren, Jr.}, {\em Existence and regularity almost everywhere of
  solutions to elliptic variational problems with constraints}, Mem. Amer.
  Math. Soc., 4 (1976), pp.~viii+199.

\bibitem{CGR}
{\sc J.~Choe, M.~Ghomi, and M.~Ritor\'{e}}, {\em The relative isoperimetric
  inequality outside convex domains in {${\bf R}^n$}}, Calc. Var. Partial
  Differential Equations, 29 (2007), pp.~421--429.

\bibitem{ET}
{\sc I.~Ekeland and R.~T\'{e}mam}, {\em Convex analysis and variational
  problems}, vol.~28 of Classics in Applied Mathematics, Society for Industrial
  and Applied Mathematics (SIAM), Philadelphia, PA, english~ed., 1999.
\newblock Translated from the French.

\bibitem{EF}
{\sc L.~Esposito and N.~Fusco}, {\em A remark on a free interface problem with
  volume constraint}, J. Convex Anal., 18 (2011), pp.~417--426.

\bibitem{FFLM}
{\sc I.~Fonseca, N.~Fusco, G.~Leoni, and M.~Morini}, {\em Global and local
  energy minimizers for a nanowire growth model}, Ann. Inst. H. Poincar\'{e} C
  Anal. Non Lin\'{e}aire, 40 (2023), pp.~919--957.

\bibitem{FMP08}
{\sc N.~Fusco, F.~Maggi, and A.~Pratelli}, {\em The sharp quantitative
  isoperimetric inequality}, Ann. of Math. (2), 168 (2008), pp.~941--980.

\bibitem{FM}
{\sc N.~Fusco and M.~Morini}, {\em Total positive curvature and the equality
  case in the relative isoperimetric inequality outside convex domains}, Calc.
  Var. Partial Differential Equations, 62 (2023), pp.~Paper No. 102, 32.

\bibitem{LRV}
{\sc G.~P. Leonardi, M.~Ritor\'{e}, and E.~Vernadakis}, {\em Isoperimetric
  inequalities in unbounded convex bodies}, Mem. Amer. Math. Soc., 276 (2022),
  pp.~1--86.

\bibitem{M}
{\sc F.~Maggi}, {\em Sets of finite perimeter and geometric variational
  problems}, vol.~135 of Cambridge Studies in Advanced Mathematics, Cambridge
  University Press, Cambridge, 2012.
\newblock An introduction to geometric measure theory.

\bibitem{MN}
{\sc F.~Maggi and M.~Novack}, {\em Isoperimetric residues and a mesoscale
  flatness criterion for hypersurfaces with bounded mean curvature}, 2022.

\bibitem{MPS}
{\sc M.~Morini, M.~Ponsiglione, and E.~Spadaro}, {\em Long time behavior of
  discrete volume preserving mean curvature flows}, J. Reine Angew. Math., 784
  (2022), pp.~27--51.

\bibitem{R}
{\sc R.~T. Rockafellar}, {\em Convex analysis}, Princeton Landmarks in
  Mathematics, Princeton University Press, Princeton, NJ, 1997.
\newblock Reprint of the 1970 original, Princeton Paperbacks.

\bibitem{Simon} {\sc L. Simon}, {\em Asymptotics for a class of nonlinear evolution equations, with applications to geometric problems},
Ann. of Math. (2) 118 (1983), no. 3, 525--571.

\end{thebibliography}
 
%\begin{thebibliography}{99}
%\bibitem{CGR} \textsc{Choe J.; Ghomi M.; Ritor\'e  M.},  \emph{The relative isoperimetric inequality outside convex domains in $\R^n$.} Calc. Var. Partial Differential Equations \textbf{29} (2007),  421--429.

%\bibitem{EF}\textsc{Esposito L.; Fusco N.}, \emph{A remark on a free interface problem with volume constraint.} J. Convex Anal. \textbf{18} (2011), 417--426.

%\bibitem{FFLM}\textsc{Fonseca I; Fusco N.; Leoni G.; Morini M.}, \emph{Global and local energy minimizers for a nanowire growth model.} Ann. Poincar\'e C, to appear  

%\bibitem{FM}\textsc{Fusco N.; Morini M.}, \emph{Total positive curvature and the equality case in the relative isoperimetric inequality outside convex domains.} arXiv:2103.03299

%\bibitem{MN}\textsc{Maggi F.; Novack M.}, \emph{Isoperimetric residues and a mesoscale flatness criterion for hypersurfaces with bounded mean curvature.} arXiv:2205.02951

%\bibitem{LRV}\textsc{Leonardi G.P.; Ritor\'e M.; Vernadakis E.}, \emph{Isoperimetric inequalities in unbounded convex bodies} Mem. Amer. Math. Soc. \textbf{276} (2022),  1–86. 

%\end{thebibliography}
\end{document}